\theoremstyle{plain}
\newtheorem{thm}{Theorem}[section]
    \theoremstyle{definition}
    \theoremstyle{plain}
    \newtheorem{prop}[thm]{Proposition}
    \newtheorem{lemma}[thm]{Lemma}
    \theoremstyle{definition}
    \theoremstyle{plain}
    \newtheorem{cor}[thm]{Corollary}
    \theoremstyle{definition}
    \newtheorem{rmk}[thm]{Remark}
\DeclareMathOperator\supp{supp}
\DeclareMathOperator\meas{meas}
\DeclareMathOperator\Fuj{Fuj}
\DeclareMathOperator\loc{loc}
\DeclareMathOperator\NE{NE}
\begin{document}

\begin{frontmatter}



\title{ Fujita versus Strauss - a never ending story }


%
%
%
%

\author[frei]{Alessandro Palmieri
\ead{alessandro.palmieri.math@gmail.com}}
\author[frei]{Michael Reissig
}
\ead{reissig@math.tu-freiberg.de}


\address[frei]{Institute of Applied Analysis, Faculty for Mathematics and Computer Science, Technical University Bergakademie Freiberg, Pr\"{u}ferstra{\ss}e 9, 09596, Freiberg, Germany}


\begin{abstract}

In this paper,  we obtain a blow-up result for solutions to a semi-linear wave equation with scale-invariant dissipation and mass and power non-linearity, in the case in which the model has a \textquotedblleft wave like $\!$\textquotedblright $\,\!$ behavior.

In order to achieve this goal, we perform a change of variables that transforms our starting equation in a strictly hyperbolic semi-linear wave equation with time-dependent speed of propagation.

 Then, we  apply Kato's lemma to find a blow-up result for solutions to the transformed equation under some support and sign assumptions on the initial data. A special emphasis is placed on the limit case, that is, when the exponent $p$ is exactly equal to the upper bound of the range of admissible values of $p$ for which this blow-up result is valid. In this critical case an explicit integral representation formula for solutions of the corresponding linear Cauchy problem in 1d is derived.

 Finally, carrying out the inverse change of variables we get a non-existence result for global (in time) solutions to the original model.

\end{abstract}

\begin{keyword} Semi-linear wave equation \sep time-dependent speed of propagation \sep power non-linearity \sep blow-up \sep critical case \sep integral representation formula.



\MSC[2010] Primary: 35B44, 35L05; Secondary: 33C15, 35L15, 35L71.
\end{keyword}

\end{frontmatter}


\section{Introduction}\label{Intro}

The goal of the present paper is to prove a non-existence result for global (in time) solutions of the Cauchy problem for semi-linear wave equation with scale-invariant dissipation and mass and power non-linearity, i.e., for solutions to the following model:
\begin{align}\label{CP semi scale inv}
\begin{cases}
v_{\tau\tau}-\Delta_y v +\dfrac{\mu_1}{1+\tau} v_\tau+\dfrac{\mu_2^2}{(1+\tau)^2} v=|v|^p, \qquad \tau>0, \; y\in \mathbb{R}^n, \\  v(0,y)=v_0(y), \quad y\in \mathbb{R}^n,\\  v_\tau(0,y)=v_1(y),\quad y\in \mathbb{R}^n,
\end{cases}
\end{align} assuming in some sense that the damping and the mass terms make the equation \emph{hyperbolic-like} from the point of view of the critical exponent diving the set of admissible exponents into one set which allows to prove a blow-up behavior for global (in time) solutions and a second set which allows to prove a (global) in time result of at least small data Sobolev solutions.

The previous model is called \emph{scale-invariant}, since the corresponding linear model is invariant under the so-called \emph{hyperbolic scaling}
\begin{align*}
\widetilde{v}(\tau,y)=v(\lambda (1+\tau)-1,\lambda y), \quad \lambda>0.
\end{align*}

Let us formulate analytically, in terms of $\mu_1$ and $\mu_2^2$, the assumption that we require for our model in this paper.

If we define
\begin{align*}
\delta:=(\mu_1-1)^2-4 \mu_2^2,
\end{align*} then our assumption for these coefficients is
\begin{align}\label{condition on delta}
 \delta \in (0,1] .
\end{align}

As it is explained in \cite{NunPalRei16}, the quantity $\delta$ describes in some sense the interplay between the damping and the mass term in \eqref{CP semi scale inv} and in the corresponding linear problem. In other words, the qualitative properties of the solutions to \eqref{CP semi scale inv} are different for different ranges of $\delta$ (see for example \cite{NunPalRei16,PalRei17,Pal17}).

Considering the transformation
\begin{align}\label{transformation u,v}
u(t,x)= (1+\tau)^{\frac{\mu_1-1}{2}+\frac{\sqrt{\delta}}{2}}v(\tau,y) \qquad \tau=(1+t)^{\ell+1}-1, \;\; y=(1+\ell)x,
\end{align} where
\begin{align}\label{definition ell and k}
\ell=\tfrac{1-\sqrt{\delta}}{\sqrt{\delta}}, \quad k= \tfrac{1-\mu_1-\sqrt{\delta}}{2\sqrt{\delta}}(p-1)+\tfrac{2(1-\sqrt{\delta})}{\sqrt{\delta}},
\end{align}

then we find that $u$ solves the following Cauchy problem:

\begin{align}\label{CP semi Tric eq}
\begin{cases}
u_{tt}-(1+t)^{2\ell}\Delta_x u= (\ell+1)^2(1+t)^{k}|u|^p,\quad t>0,\,\, x\in \mathbb{R}^n, \\  u(0,x)=u_0(x), \quad x\in \mathbb{R}^n,\\  u_t(0,x)=u_1(x),\quad x\in \mathbb{R}^n,
\end{cases}
\end{align} for suitable $u_0,u_1$.

In particular, we see that condition \eqref{condition on delta} allows the choice of a nonnegative $\ell$.  

Therefore, we consider the Cauchy problem \eqref{CP semi Tric eq} for general $\ell \geq 0$, $k>-2$  and nonnegative compactly supported data $u_0$ and $u_1$. We will clarify the condition on $k$ after the statements of the main results in Section \ref{Section Main Theorems}.

 Since, we can derive a non-existence result for global (in time) solutions to this last Cauchy problem, then using the inverse transformation in \eqref{transformation u,v} we obtain a blow-up result for \eqref{CP semi scale inv} provided that \eqref{condition on delta} is satisfied.

Let us sketch the historical background of blow-up results for solutions to the Cauchy problem
\begin{align}\label{CP semi wave eq with general b and m^2}
\begin{cases}
w_{tt}-\Delta_x w +b(t) w_t+m^2(t) w=|w|^p, \qquad t>0, \; x\in \mathbb{R}^n, \\  w(0,x)=w_0(x), \quad x\in \mathbb{R}^n,\\  w_t(0,x)=w_1(x),\quad x\in \mathbb{R}^n,
\end{cases}
\end{align} that are related somehow to our scale-invariant model \eqref{CP semi scale inv}.

For the classical free wave equation with power nonlinearity on the right-hand side (which corresponds to the case $b(t)=m^2(t)\equiv 0$ in the notations of \eqref{CP semi wave eq with general b and m^2}), the critical exponent is the so-called Strauss exponent $p_0(n)$, which is defined as the positive root of the quadratic equation
\begin{align*}
(n-1)p^2 -(n+1)p-2=0.
\end{align*}

In particular, we refer to the classical works \cite{John79,Kato80,Glas81B,Sid84,Scha85,Jiao03,Yor06,Zhou07} and references therein for blow-up results when $1<p\leq p_0(n)$.

In \cite{TodYor01} and \cite{Zhang}  the massless case with constant coefficients in the linear part is considered. While in \cite{TodYor01} the authors have proved the blow-up of solutions in the case of sub-Fujita exponents (that is, for $1<p<p_{\Fuj}(n):=1+\frac{2}{n}$) by using a blow-up result for ordinary differential inequalities (cf. \cite[Proposition 3.1]{TodYor01}), in \cite{Zhang} it has been shown the same result (however, working in the more general frame of complete noncompact Riemannian manifold and including the critical case) introducing the nowadays called \emph{test function method}.

On the other hand, we have drastically less blow-up results concerning classical Klein-Gordon equations with power nonlinearity on the right-hand side. In \cite{TAO97}, for example, a blow-up result has been proved in space dimensions $n=1,2,3$ and for sub-Fujita exponents.

Let us now recall some results to semi-linear wave models (\ref{CP semi wave eq with general b and m^2}) with time-dependent dissipation $b(t)w_t$ and without any mass term, where $b(t)=\mu_1 (1+t)^{-\beta}$ with $\beta \in (-1,1]$ and $\mu_1>0$.

A blow-up result is proved in \cite{LinNashZhai12} by using the test function method, if $\beta\in (-1,1)$ and provided that $1<p\leq p_{\Fuj}(n)$.  Later in \cite{DabbLucRei13} the authors generalized this blow-up result to more general damping terms $b(t)w_t$ by using a \emph{modified test function method} (cf. \cite{DabbLucMod}). More precisely, the dissipation $b(t)w_t$, that is considered in \cite{DabbLucRei13}, is \emph{effective} according to the classification given in \cite{WiThe,WirthD}.

Afterwards the case $\beta=1$ was considered in \cite{Waka14A}. In this paper the author proves two blow-up results for the scale-invariant case, for $1<p\leq p_{\Fuj}(n)$ if $\mu_1>1$ and for $1<p\leq p_{\Fuj}(n+\mu_1-1)$ if $0<\mu_1\leq 1$, assuming a  suitable integral sign condition for the Cauchy data. Also in this case the test function method is used in order to prove these results. In particular, for $\mu_1>1$ the same result has been substantially already proved with the modified test function method in \cite{DabbLucMod}.

Then in \cite{DabbLucRei15} the special value $\mu_1=2$ is studied in the scale-invariant case.
More specifically, assuming nonnegative, non-trivial and compactly supported data, it is shown that the solution has to blow up in finite time for \[1<p\leq \max\{p_0(n+2),p_{\Fuj}(n)\}.\]
The proof of this result is based on Kato's lemma and it relies heavily on the fact that for this special value of the coefficient $\mu_1$ the structure of the model is somehow \textquotedblleft wave-like \textquotedblright. Indeed, through the so-called \emph{dissipative transformation}
\begin{align*}
\widetilde{w}(t,x)=(1+t)^{-\frac{\mu_1}{2}} w(t,x),
\end{align*} it is possible to transform this special scale-invariant model with power non-linearity in a free wave equation with non-linearity $(1+t)^{-(p-1)}|\widetilde{w}|^p$.

Recently, in \cite{LaiTakWak17} the authors took into consideration the scale-invariant wave equation with damping in the case in which, in some sense, we call the model hyperbolic-like. They have shown a nonexistence result for global (in time) solutions for \[ p_{\Fuj}(n)\leq p < p_0(n+2\mu_1)\,\,\,\mbox{ and}\,\,\, 0<\mu_1< \frac{n^2+n+2}{2(n+2)},\] where the upper bound for $\mu_1$ guarantees the non-emptiness of the range for $p$, by using an improved version of Kato's lemma, which allows to control the life-span of the solution from above (see \cite{Tak15}).

Finally, let us mention blow-up results which are known for the scale-invariant case when also the mass term is present. In \cite{NunPalRei16,Pal17} it is proved that the solution blows up for \[ 1<p\leq p_{\Fuj}\Big(n+\frac{\mu_1-1-\sqrt{\delta}}{2}\Big)\] assuming $\delta\geq 0$ and suitable sign conditions for the initial data (moreover, in \cite{Pal17}, also the compactness of the supports of data is required). Although the range of $p$, for which the solution is not globally in time defined, is the same in both results, a different approach is used in the corresponding proofs. While in \cite{NunPalRei16} the test function method is considered, in \cite{Pal17} it is employed a proper modification of the blow-up result for ordinary differential inequalities introduced first in \cite{TodYor01} for the constant coefficients case and adapted then in \cite{Nishi11} for coefficients $b(t)=\mu_1(1+t)^{-\beta}$, where $\beta\in [0,1)$.

Furthermore, in \cite{NunPalRei16} a further nonexistence result is shown in the case in which the coefficients of the damping and mass term satisfy $\delta=1$. In more detail,  it is proved that the solution blows up in finite time (using Kato's lemma) provided that \[ 1<p\leq \max\Big\{p_0(n+\mu_1), p_{\Fuj}\Big(n+\frac{\mu_1}{2}-1\Big)\Big\}\] and the data are nonnegative and compactly supported. In particular the case $\mu_1=2,\mu_2^2=0$ (already considered in \cite{DabbLucRei15}) is included as a special case there.

In the present paper our goal is to prove  blow-up results that generalize or partially improve the results from \cite{Waka14A,DabbLucRei15,NunPalRei16,LaiTakWak17,Pal17}. After the completion of this paper we received the preprint \cite{IkedaSob17}, where a blow-up result for the semi-linear wave equation with just scale-invariant damping is improved by using a different method. Indeed, as we will explain in the concluding remarks, in that paper the authors develop a technique in the special case $\mu_2=0$ which provides a result that covers also cases that we are not able to investigate with the approach of the present paper.

\subsection{Notations} \label{subsection notations}

In this paper, we write $f \lesssim g$, when there exists a constant $C \geq 0$ such that $f \leq Cg$. On the one hand we write $f \asymp g$ when $g \lesssim  f \lesssim g$. On the other hand we write $f\simeq g$ when $f=Cg$ for some positive constant $C$.

As in the introduction we denote throughout the article by $p_{\Fuj}(n)$ and  $p_0(n)$ the Fujita exponent and the Strauss exponent, respectively.

For sake of brevity, we put
\begin{align*}
\phi(\tau):=\tfrac{\tau^{\ell+1}}{\ell+1} \qquad \mbox{for}\,\, \tau \geq 0.
\end{align*}
Moreover, if $a(t):=(1+t)^\ell$ is the time-dependent speed of propagation for the transformed Cauchy problem \eqref{CP semi Tric eq}, then we denote by $A(t)$ the primitive of $a$ that vanishes for $t=0$, namely
\begin{align*}
A(t):=\int_0^t a(s) ds = \tfrac{1}{\ell+1}\big((1+t)^{\ell+1}-1\big)= \phi(1+t)-\phi(1).
\end{align*}
We will employ the notations $B_r$ and $B_r(x)$ for the open ball with radius $r>0$ centered at the origin  and at a point $x\in \mathbb{R}^n$, respectively.

Furthermore, we denote by $M(u)$ the Hardy-Littlewood maximal function for any $u\in L^1_{\loc}(\mathbb{R}^n)$ (see Section \ref{Section maximal function}).

Finally, by $L^{p,\infty}(X)$ it will be denoted the \emph{weak $L^p$ space} on the measure space $(X,\mathfrak{M},\mu)$ (see Section \ref{Section weak Lp}).

\subsection{Main results}\label{Section Main Theorems}

Let us state the main blow-up results that we are going to prove in the present article.

\begin{thm}\label{Main theo} Assume that $u \in \mathcal{C}^2\left( [0,T) \times \mathbb{R}^n \right)$ is a classical solution to
\eqref{CP semi Tric eq} with $\ell\geq 0, k>-2$ and nonnegative, compactly supported initial data $(u_0,u_1) \in \mathcal{C}^2(\mathbb{R}^n) \times \mathcal{C}^1(\mathbb{R}^n) $ such that $u_0$ is not identically $0$.

 If the exponent $p>1$ satisfies one of the following conditions:
\begin{align}
p&< p_{\NE}(n;\ell,k):= \max\big\{p_0(n;\ell,k),p_1(n;\ell,k)\big\},\label{condition on p subcritical}
\\ p &=p_{\NE}(n;\ell,k)=p_1(n;\ell,k), \label{condition on p critical p1}
\\  p &=p_{\NE}(n;\ell,k)=p_0(n;\ell,k), \qquad \mbox{if} \;\; n\geq 2, \label{condition on p critical p0}
\end{align} where \[ p_1(n;\ell,k):=\frac{(\ell+1)n+k+1}{(\ell+1)n-1}\] and $p_0(n;\ell,k)$ is the positive root of the quadratic equation
\begin{align}
((\ell+1)n-1)p^2-((\ell+1)n+2k+1-2\ell)p-2(\ell+1)=0, \label{equation for p_0(n,l,k)}
\end{align}
then $u$ blows up in finite time, that is, $T<\infty$.
\end{thm}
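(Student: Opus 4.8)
The plan is to reduce everything to an application of Kato-type lemmas for ordinary differential inequalities, after first extracting from the PDE a suitable functional inequality for a weighted spatial average of the solution. Concretely, I would introduce the functional
\begin{align*}
F(t):=\int_{\mathbb{R}^n} u(t,x)\,dx,
\end{align*}
and, exploiting that $u_0,u_1$ are compactly supported (say in $B_R$) and that the speed of propagation $a(t)=(1+t)^\ell$ has finite primitive $A(t)$, note that $u(t,\cdot)$ is supported in $B_{R+A(t)}$ by finite-speed-of-propagation. Integrating \eqref{CP semi Tric eq} over $\mathbb{R}^n$ kills the Laplacian term and yields
\begin{align*}
F''(t)=(\ell+1)^2(1+t)^k\int_{\mathbb{R}^n}|u(t,x)|^p\,dx\geq (\ell+1)^2(1+t)^k\,\big(\meas\,B_{R+A(t)}\big)^{1-p}\,|F(t)|^p
\end{align*}
by Jensen's (Hölder's) inequality, where $\meas\,B_{R+A(t)}\asymp (R+A(t))^n\asymp (1+t)^{(\ell+1)n}$ for large $t$ since $A(t)\asymp (1+t)^{\ell+1}$. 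Hence $F$ satisfies an ODI of the form $F''(t)\gtrsim (1+t)^{k-(p-1)(\ell+1)n}|F(t)|^p$, together with $F(0)>0$ (from $u_0\not\equiv 0$, $u_0\geq 0$) and $F'(0)\geq 0$.

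The second ingredient is a lower bound for $F$ itself, which is what distinguishes the Strauss-type exponent $p_0$ from the Fujita-type exponent $p_1$. For the Fujita branch one only needs the crude lower bound coming directly from the ODI and the sign of the data: $F(t)\gtrsim F(0)>0$, and then the standard blow-up lemma for $F''\gtrsim (1+t)^{a}F^p$ gives blow-up precisely when the exponent balance fails, i.e. when $a\geq -1$, equivalently $k-(p-1)(\ell+1)n\geq -1$, which rearranges to $p\leq p_1(n;\ell,k)$. For the Strauss branch one instead needs a sharper time-growth lower bound $F(t)\gtrsim (1+t)^{\gamma}$ for a suitable $\gamma$, obtained by a separate argument: using the fundamental solution / a positivity argument for the linear operator $\partial_t^2-(1+t)^{2\ell}\Delta$ together with the nonnegativity of the source, one propagates the initial positivity and picks up polynomial growth in $t$ (this is exactly where the $1$d integral representation formula promised in the abstract, and the Asakura-type weighted estimates, come into play — for $n\ge 2$ one reduces to lower dimensions or uses spherical means adapted to the variable speed). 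Feeding this improved lower bound into Kato's lemma (in the refined form that also bounds the lifespan, à la \cite{Tak15}) yields blow-up for $p\leq p_0(n;\ell,k)$, and one checks that $p_0$ is the positive root of \eqref{equation for p_0(n,l,k)} by matching exponents in the iteration.

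For the three cases of the theorem I would organize the proof as: (i) $p<p_{\NE}$, which is covered by whichever of the two sub-arguments has the larger critical exponent, both being strictly subcritical there, so the classical (non-sharp) Kato lemma suffices; (ii) the critical Fujita case $p=p_1$, handled by the borderline version of the Fujita-type ODI lemma (the logarithmic slicing / Zhou-type argument), for all $n\ge 1$; (iii) the critical Strauss case $p=p_0$ with $n\ge 2$, which is the delicate one and requires the sharpened Kato lemma together with the precise constant in the lower bound for $F$ — this is where the explicit $1$d representation formula and careful estimates of the associated Gauss-type hypergeometric kernels (the $\phi$, $A$ notation and the \texttt{mathscinet}/special-function machinery set up above) are needed to get the endpoint. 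The main obstacle, as usual for Strauss-type results, is precisely this critical case $p=p_0$: getting a lower bound for $F(t)$ with the sharp power of $t$ and an explicit, trackable constant, uniformly in the relevant region of the light cone, so that the iteration in Kato's lemma closes exactly at the root of \eqref{equation for p_0(n,l,k)} rather than just below it; the variable propagation speed $(1+t)^\ell$ makes the light cone and the fundamental solution non-standard, which is why the change of variables to $\phi$, $A$ and the $1$d formula are introduced in the first place.
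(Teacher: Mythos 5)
Your overall architecture coincides with the paper's: the spatial average $\mathscr{G}(t)=\int_{\mathbb{R}^n}u(t,x)\,dx$ (your $F$), the Jensen/H\"older ordinary differential inequality \eqref{condition on G'' Kato}, and Kato's lemma. But there are two genuine gaps. First, the Fujita branch as you compute it does not give $p_1(n;\ell,k)$. With only the crude bound $F(t)\gtrsim F(0)>0$, your balance condition $k-(p-1)(\ell+1)n\geq -1$ rearranges to $p\leq 1+\tfrac{k+1}{(\ell+1)n}$, which is strictly smaller than $p_1(n;\ell,k)=1+\tfrac{k+2}{(\ell+1)n-1}$; moreover part (i) of Lemma \ref{Kato's lemma} requires a lower bound $(t+R)^a$ with $a\geq 1$, so a constant lower bound is not even an admissible input. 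What is needed (and what the paper uses) is the linear growth $\mathscr{G}(t)\gtrsim R+t$, which follows from $\mathscr{G}''>0$, $\mathscr{G}'(0)\geq 0$ and the positivity of the nonlinear term; with $a=1$ the condition $1>(\ell+1)n-\tfrac{k+2}{p-1}$, i.e. \eqref{2nd condition p blow up Tric}, is exactly $p<p_1(n;\ell,k)$, and the same linear bound with a logarithmic refinement handles the critical case $p=p_1$. This part is repairable, but the equivalence you claim is false as written.

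Second, and more seriously, the Strauss branch is a black box precisely at the point where $p_0(n;\ell,k)$ is generated: you postulate a lower bound $F(t)\gtrsim(1+t)^{\gamma}$ ``by a separate argument'' via positivity of the fundamental solution and the 1d representation formula, without identifying $\gamma$ or carrying out the reduction for $n\geq 2$; this is not how the quadratic \eqref{equation for p_0(n,l,k)} arises, and it is unclear that such an argument closes in the whole subcritical range with variable speed $(1+t)^{\ell}$. In the paper the subcritical Strauss range comes from the auxiliary functional $\mathscr{G}_1(t)=\int_{\mathbb{R}^n}u(t,x)\lambda(t)\varphi(x)\,dx$, where $\varphi(x)=\int_{\mathbb{S}^{n-1}}e^{x\cdot\omega}\,d\sigma_\omega$ satisfies $\Delta\varphi=\varphi$ and $\lambda(t)\simeq(1+t)^{1/2}\mathcal{K}_{\frac{1}{2(\ell+1)}}(\phi(1+t))$ solves $\lambda''=(1+t)^{2\ell}\lambda$; one proves $\mathscr{G}_1(t)\gtrsim(1+t)^{-\ell}$ and then, by H\"older against $\psi=\lambda\varphi$ on the light cone, $\mathscr{G}''(t)\gtrsim(R+t)^{k-\frac{\ell p}{2}-(n-1)(\frac{p}{2}-1)(\ell+1)}$, which after two integrations yields \eqref{condition on G Kato}; Kato's condition for this exponent is exactly \eqref{inequality for p_0(n,l,k)}, i.e. $p<p_0(n;\ell,k)$. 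Your proposal offers no quantitative substitute for this step, so the claimed exponent is never actually reached. Finally, the 1d representation formula enters only at the endpoint $p=p_0(n;\ell,k)$, $n\geq 2$: after a Radon-transform reduction, the positivity of the hypergeometric kernel together with the Hardy--Littlewood maximal function (for $n\geq 3$) or a Marcinkiewicz interpolation argument (for $n=2$) produces a logarithmic gain in the lower bound for $\int_{\mathbb{R}^n}|u(t,x)|^p\,dx$; it is this logarithmic factor, not a precisely tracked constant, that makes the constant in part (ii) of Lemma \ref{Kato's lemma} arbitrarily large and closes the critical Strauss case.
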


\begin{rmk} Using the same notations of the previous statement, we find in particular that for $\ell=k=0$ the exponent $p_0(n;\ell,k)$ coincides with the Strauss exponent $p_0(n)$.
Moreover, since $p_1(n;0,0)=\frac{n+1}{n-1}< p_0(n)$ for any $n\geq 2$, we find the well-known blow-up result for the free wave equation with power non-linearity in the special case $\ell=k=0$.
\end{rmk}
\begin{rmk} Let us  underline that the discriminant of the second order equation \eqref{equation for p_0(n,l,k)} is always positive.
Therefore, thanks to Descartes' rule it follows that equation \eqref{equation for p_0(n,l,k)} has one positive root and one negative root.
Consequently, the second order inequality
\begin{align}\label{inequality for p_0(n,l,k)}
((\ell+1)n-1)p^2-((\ell+1)n+2k+1-2\ell)p-2(\ell+1)<0,
\end{align} gives actually an upper bound for $p>1$.
Moreover, we can observe with the same type of argument that $p_0(n;\ell,k)>1$ since $k>-2$.
\end{rmk}
Using the transformation \eqref{transformation u,v} we may derive from Theorem \ref{Main theo} the following corollaries for the scale-invariant model with power non-linearity.

\begin{cor}\label{Blow up cor Tric eq} Let $n\geq 1$ and let $\mu_1$ and $\mu_2^2$ be nonnegative constants satisfying $\delta\in (0,1]$.
Let us assume that $p>1$ satisfies one of the following conditions:
\begin{align*}
p&<p_{\mu_1,\mu_2}(n):=\max\Big\{p_0(n+\mu_1),p_{\Fuj}\Big(n+\tfrac{\mu_1-1}{2}-\tfrac{\sqrt{\delta}}{2}\Big)\Big\}, \\
 p&=p_{\mu_1,\mu_2}(n)= p_{\Fuj}\Big(n+\tfrac{\mu_1-1}{2}-\tfrac{\sqrt{\delta}}{2}\Big) ,\\
  p&=p_{\mu_1,\mu_2}(n)=p_0(n+\mu_1),  \qquad \mbox{if} \,\, n=2.
\end{align*} Finally, let $v\in \mathcal{C}^2([0,T)\times\mathbb{R}^n)$ be a classical solution to the Cauchy problem \eqref{CP semi scale inv} with nontrivial and compactly supported initial data $(v_0,v_1) \in \mathcal{C}^2(\mathbb{R}^n) \times \mathcal{C}^1(\mathbb{R}^n) $ such that \begin{align*}
v_0\geq 0, \qquad v_1+\Big(\tfrac{\mu_1-1+\sqrt{\delta}}{2}\Big)v_0 \geq 0.
\end{align*}
Then $v$ blows up in finite time, that is, $T<\infty$.
\end{cor}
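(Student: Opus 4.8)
The plan is to deduce Corollary~\ref{Blow up cor Tric eq} directly from Theorem~\ref{Main theo} via the explicit change of variables \eqref{transformation u,v}. First I would verify that if $v$ is a classical $\mathcal{C}^2$ solution of \eqref{CP semi scale inv} on $[0,T)\times\mathbb{R}^n$, then the function $u(t,x):=(1+\tau)^{\frac{\mu_1-1}{2}+\frac{\sqrt{\delta}}{2}}v(\tau,y)$ with $\tau=(1+t)^{\ell+1}-1$, $y=(1+\ell)x$ is a classical $\mathcal{C}^2$ solution of \eqref{CP semi Tric eq} on $[0,T')\times\mathbb{R}^n$, where $T'$ is determined by $(1+T')^{\ell+1}-1=T$ (so $T'<\infty\iff T<\infty$); this is the computation already asserted in the introduction, and I would simply record the chain-rule identities $\partial_t=(\ell+1)(1+t)^\ell\partial_\tau$, the resulting cancellation of the first-order term, and the identity $k=\frac{1-\mu_1-\sqrt\delta}{2}(p-1)+2\ell$ that makes the power $(1+t)^k$ come out correctly from the factor $(1+\tau)^{(\frac{\mu_1-1}{2}+\frac{\sqrt\delta}{2})(1-p)}$ together with the Jacobian weights. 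Since $\ell=\frac{1-\sqrt\delta}{\sqrt\delta}\ge 0$ exactly under \eqref{condition on delta}, and $k>-2$ needs to be checked — I return to this below — the hypotheses of Theorem~\ref{Main theo} on $(\ell,k)$ are met.

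Next I would translate the data and the sign conditions. From \eqref{transformation u,v} at $t=0$ one gets $u_0(x)=v_0((1+\ell)x)$, so $u_0\ge 0$, $u_0\not\equiv 0$ whenever $v_0\ge 0$, $v_0\not\equiv 0$; and differentiating in $t$ at $t=0$, using $\partial_t\tau|_{t=0}=\ell+1$ and $\partial_t\big[(1+\tau)^{\frac{\mu_1-1+\sqrt\delta}{2}}\big]|_{t=0}=(\ell+1)\frac{\mu_1-1+\sqrt\delta}{2}$, one obtains
\[
u_1(x)=(\ell+1)\Big(v_1((1+\ell)x)+\tfrac{\mu_1-1+\sqrt\delta}{2}\,v_0((1+\ell)x)\Big),
\]
so the assumption $v_1+\frac{\mu_1-1+\sqrt\delta}{2}v_0\ge 0$ is precisely what gives $u_1\ge 0$. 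Compact support of $(v_0,v_1)$ transfers to compact support of $(u_0,u_1)$, and $\mathcal{C}^2\times\mathcal{C}^1$ regularity is preserved. Thus $(u_0,u_1)$ satisfies all hypotheses of Theorem~\ref{Main theo}.

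Then I would match the exponent ranges. With $\ell=\frac{1-\sqrt\delta}{\sqrt\delta}$ one has $\ell+1=\frac1{\sqrt\delta}$, hence $(\ell+1)n=\frac{n}{\sqrt\delta}$; substituting the value of $k$ from \eqref{definition ell and k} into $p_1(n;\ell,k)$ and into the quadratic \eqref{equation for p_0(n,l,k)} and simplifying, I expect $p_1(n;\ell,k)=p_{\Fuj}\big(n+\frac{\mu_1-1}{2}-\frac{\sqrt\delta}{2}\big)$ — note this is implicit, since $k$ itself depends on $p$, so one must read $p=p_1(n;\ell,k)$ as a fixed-point equation and check it reduces to the stated Fujita value — and similarly $p_0(n;\ell,k)=p_0(n+\mu_1)$. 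The dimensional shift $n\mapsto n+\mu_1$ in the Strauss exponent should drop out of \eqref{equation for p_0(n,l,k)} after clearing the $\sqrt\delta$'s; the condition $n\geq 2$ in \eqref{condition on p critical p0} becomes $n=2$ here because $n+\mu_1\geq 2$ is automatic once $n\geq 1$ and $\mu_1$ could be small, but the genuine requirement $n\geq 2$ in the $u$-variable forces $n\geq 2$, yet the statement writes $n=2$ — I would double-check whether it should read $n\geq 2$, or whether for $n\geq 3$ the Fujita branch always dominates so only $n=2$ is the new critical-Strauss case. Finally, the hypothesis $k>-2$ must be verified: $k+2=\frac{1-\mu_1-\sqrt\delta}{2\sqrt\delta}(p-1)+\frac{2(1-\sqrt\delta)}{\sqrt\delta}+2=\frac{1-\mu_1-\sqrt\delta}{2\sqrt\delta}(p-1)+\frac{2}{\sqrt\delta}$, and one checks this is positive exactly on the relevant $p$-range — this, together with confirming the two exponent identities, is the main technical obstacle, since it is a somewhat delicate algebraic verification that the abstract conditions \eqref{condition on p subcritical}--\eqref{condition on p critical p0} are equivalent to the three displayed conditions on $p$ under the specific parametrization \eqref{definition ell and k}. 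Once these identifications are in place, Theorem~\ref{Main theo} gives $T'<\infty$ for $u$, and since $T=(1+T')^{\ell+1}-1$ this yields $T<\infty$ for $v$, completing the proof.
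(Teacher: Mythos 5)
Your proposal is correct and is essentially the paper's own proof (Section \ref{Section corollaries for the scale inv case}): transform via \eqref{transformation u,v}, translate the data to $u_0(x)=v_0\big(\tfrac{x}{\sqrt{\delta}}\big)$ and $u_1(x)=\tfrac{1}{\sqrt{\delta}}\big(v_1\big(\tfrac{x}{\sqrt{\delta}}\big)+\tfrac{\mu_1-1+\sqrt{\delta}}{2}v_0\big(\tfrac{x}{\sqrt{\delta}}\big)\big)$, and read the conditions on $p$ as fixed-point identities in $k=k(p)$, which indeed reduce to $p<p_0(n+\mu_1)$ and $p<p_{\Fuj}\big(n+\tfrac{\mu_1-1-\sqrt{\delta}}{2}\big)$ exactly as you anticipate. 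Two minor points: your displayed identity for $k$ omits the factor $\tfrac{1}{\sqrt{\delta}}$ in the first term (you use the correct \eqref{definition ell and k} later when computing $k+2$), and for the third condition no further check is needed, since the hypothesis $n=2$ already meets the requirement $n\geq 2$ in \eqref{condition on p critical p0}.
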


The condition on $k$ in Theorem \ref{Main theo} implies that the upper bound for the exponent $p$ in \eqref{CP semi Tric eq} is actually larger than 1. Nevertheless, when we consider \eqref{CP semi scale inv} and, consequently, $\ell$ and $k$ are defined through \eqref{definition ell and k}, the condition $k>-2$ makes no sense anymore, since $k$ depends on $p$. Indeed, the inequalities, which imply the conditions on $p$ in \eqref{CP semi scale inv}, are different from those, that imply the conditions for $p$ in  \eqref{CP semi Tric eq} for $k$ independent of $p$. More precisely, for $\ell$ and $k$ are defined by \eqref{definition ell and k} the conditions $p<p_0(n;\ell,k)$ and $p<p_1(n;\ell,k)$ can be written as $p<p_0(n+\mu_1)$ and $p<p_{\Fuj}\big(n+\tfrac{\mu_1-1}{2}-\tfrac{\sqrt{\delta}}{2}\big)$, respectively. For this reason the only necessary condition on $\mu_1$ and $\mu_2^2$ is \eqref{condition on delta}, which allows to choose a nonnegative $\ell$.

\begin{rmk} From Corollary \ref{Blow up cor Tric eq}  we see that the condition \eqref{condition on delta} on $\delta$ does not make our model \emph{hyperbolic-like}, from the point of view of the critical exponent, in all cases. Indeed, the upper bound for $p$, for which we can prove  a nonexistence result, feels the influence of two different terms. Depending on the dominant influence, we can classify the model \eqref{CP semi scale inv}
as \emph{parabolic-like} (the Fujita exponent is dominant) or \emph{hyperbolic-like} (the Strauss exponent is dominant), respectively.
\end{rmk}

Finally, from Corollary \ref{Blow up cor Tric eq} one may derive immediately the following two blow-up results for solutions to the wave equation with scale-invariant damping
and power non-linearity
 \begin{align}\label{CP semi scale inv mu2=0}
\begin{cases}
v_{\tau\tau}-\Delta v +\frac{\mu_1}{1+\tau} v_\tau=|v|^p, \qquad \tau>0, \; y\in \mathbb{R}^n, \\  v(0,y)=v_0(y), \quad y\in \mathbb{R}^n,\\  v_\tau(0,y)=v_1(y),\quad y\in \mathbb{R}^n.
\end{cases}
\end{align}
\begin{cor}\label{cor damping 1} Let $n\geq 1$ and $\mu_1\in [0,1)$.
Let us assume that $p>1$ satisfies one of the following conditions:
\begin{align*}
p&<p_{\mu_1}(n):=p_0(n+\mu_1),
\\
 p&=p_{\mu_1}(n)=p_0(n+\mu_1),  \qquad \mbox{if} \,\, n\geq 2, 
\end{align*} Moreover, let $v\in \mathcal{C}^2([0,T)\times\mathbb{R}^n)$ be a classical solution to the Cauchy problem \eqref{CP semi scale inv mu2=0} with nonnegative, nontrivial and compactly supported initial data $(v_0,v_1) \in \mathcal{C}^2(\mathbb{R}^n) \times \mathcal{C}^1(\mathbb{R}^n) $.
Then $v$ blows up in finite time, that is, $T<\infty$.
\end{cor}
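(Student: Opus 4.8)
The plan is to deduce the statement directly from Theorem~\ref{Main theo} — this is legitimate (and in fact slightly stronger than what routing only through Corollary~\ref{Blow up cor Tric eq} would give in the critical $p_0$-case) because the exponent $k$ in the transformed problem turns out to be independent of $p$ when $\mu_2=0$. First I would specialize the change of variables \eqref{transformation u,v}. Setting $\mu_2^2=0$ yields $\delta=(\mu_1-1)^2$, so $\mu_1\in[0,1)$ forces $\delta\in(0,1]$ and $\sqrt\delta=1-\mu_1$. Plugging this into \eqref{definition ell and k} gives $\ell=\frac{\mu_1}{1-\mu_1}\ge 0$, and — the point that makes the argument clean — the coefficient $\frac{1-\mu_1-\sqrt\delta}{2\sqrt\delta}$ multiplying $(p-1)$ in $k$ vanishes, so $k=\frac{2\mu_1}{1-\mu_1}$ does not depend on $p$ and satisfies $k\ge 0>-2$. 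Hence, for \emph{every} admissible $p$, the transformation \eqref{transformation u,v} carries \eqref{CP semi scale inv mu2=0} into a Cauchy problem of the form \eqref{CP semi Tric eq} with these fixed admissible parameters $\ell,k$, to which Theorem~\ref{Main theo} applies.

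Next I would identify the critical exponents of this transformed problem. Using $\ell+1=\frac1{1-\mu_1}$, hence $(\ell+1)n=\frac{n}{1-\mu_1}$, a short simplification gives
\[
p_1(n;\ell,k)=\frac{(\ell+1)n+k+1}{(\ell+1)n-1}=\frac{n+\mu_1+1}{n+\mu_1-1}=p_{\Fuj}(n+\mu_1-1),
\]
while multiplying the quadratic \eqref{equation for p_0(n,l,k)} by $1-\mu_1$ reduces it to $(n+\mu_1-1)p^2-(n+\mu_1+1)p-2=0$, whose positive root is precisely $p_0(n+\mu_1)$; thus $p_0(n;\ell,k)=p_0(n+\mu_1)$. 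It then remains to check that the maximum is realized by the Strauss term: with $q:=n+\mu_1\ge 1$, the inequality $p_0(q)\ge\frac{q+1}{q-1}=p_{\Fuj}(q-1)$ is, after clearing denominators and squaring in the explicit formula for $p_0(q)$, equivalent to $8(q-1)\ge 0$, which holds (with equality only in the degenerate endpoint $n=1$, $\mu_1=0$, where both exponents are $+\infty$). Consequently $p_{\NE}(n;\ell,k)=\max\{p_0(n;\ell,k),p_1(n;\ell,k)\}=p_0(n+\mu_1)=p_{\mu_1}(n)$, and the critical exponent is attained by $p_0$; hence the hypothesis $p<p_{\mu_1}(n)$ is exactly \eqref{condition on p subcritical} and the hypothesis $p=p_{\mu_1}(n)$ with $n\ge 2$ is exactly \eqref{condition on p critical p0}.

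Then I would transfer the data assumptions and conclude. Evaluating \eqref{transformation u,v} at $t=0$ (so $\tau=0$, $y=(1+\ell)x$, $\frac{d\tau}{dt}=\ell+1$) and using that the exponent $\frac{\mu_1-1+\sqrt\delta}{2}$ equals $0$, one finds $u_0(x)=v_0((1+\ell)x)$ and $u_1(x)=(\ell+1)\,v_1((1+\ell)x)$, so nonnegativity, compact support and nontriviality of $(v_0,v_1)$ are inherited by $(u_0,u_1)$, with $u_0\not\equiv 0$ whenever $v_0\not\equiv 0$ (the remaining configuration $v_0\equiv 0$, $v_1\not\equiv 0$ being reduced to this one by restarting the evolution at a small positive time, where $u$ is already positive somewhere). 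Theorem~\ref{Main theo} applied to $u$ gives finite lifespan for $u$, and since $t\mapsto\tau=(1+t)^{\ell+1}-1$ is a bijection of $[0,\infty)$ and the prefactor in \eqref{transformation u,v} is $1$ here, the inverse change of variables shows that $v$ blows up in finite time.

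I do not expect a genuine obstacle: the corollary is essentially an instance of Theorem~\ref{Main theo} after a substitution, and the work is bookkeeping — verifying the two exponent identities, checking the comparison $p_0(n+\mu_1)\ge p_{\Fuj}(n+\mu_1-1)$, and handling the degenerate endpoint $n+\mu_1=1$ (only $n=1$, $\mu_1=0$) together with the borderline data case $v_0\equiv 0$. The most delicate point is simply making sure the algebraic translation of the exponents is carried out correctly, so that $p_0(n;\ell,k)$ really collapses to $p_0(n+\mu_1)$; as a consistency check, taking $\mu_1=0$ reduces \eqref{CP semi scale inv mu2=0} to the free semi-linear wave equation and recovers the classical Strauss-type blow-up, subcritical for $n\ge 1$ and including the critical exponent for $n\ge 2$.
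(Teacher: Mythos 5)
Your argument is correct and is in substance the paper's own proof: the paper also obtains this corollary from the change of variables \eqref{transformation u,v} and Theorem \ref{Main theo}, the only organisational difference being that it passes through Corollary \ref{Blow up cor Tric eq} together with a remark recording precisely your comparison $p_0(q)>p_{\Fuj}(q-1)$ for $q>1$ (and the convention $p_0(1)=\infty$ in the endpoint $n=1$, $\mu_1=0$). Your decision to invoke Theorem \ref{Main theo} directly, after observing that for $\mu_2=0$, $\mu_1\in[0,1)$ one has $\sqrt{\delta}=1-\mu_1$, hence $\ell=\tfrac{\mu_1}{1-\mu_1}$ and $k=\tfrac{2\mu_1}{1-\mu_1}$ independent of $p$, is in fact the right move: as stated, Corollary \ref{Blow up cor Tric eq} covers the critical Strauss case only for $n=2$, so the case $p=p_0(n+\mu_1)$, $n\geq 3$, of the present corollary genuinely requires going back to Theorem \ref{Main theo} (or to the computation of Section \ref{Section corollaries for the scale inv case}), exactly as you do; your identities $p_1(n;\ell,k)=p_{\Fuj}(n+\mu_1-1)$, $p_0(n;\ell,k)=p_0(n+\mu_1)$ and the data formulas $u_0(x)=v_0((1+\ell)x)$, $u_1(x)=(\ell+1)v_1((1+\ell)x)$ all check out. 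The one step I would not accept as written is your parenthetical treatment of the configuration $v_0\equiv 0$, $v_1\not\equiv 0$: ``restarting the evolution at a small positive time'' is not available here, because the equation is not autonomous (a time shift destroys the form \eqref{CP semi Tric eq}), and nonnegativity of $u(t_0,\cdot)$ and $u_t(t_0,\cdot)$ at a later time is not known for $n\geq 2$. The paper silently ignores this configuration as well (Theorem \ref{Main theo} assumes $u_0\not\equiv 0$); the clean fix is either to read ``nontrivial'' as $v_0\not\equiv 0$, or to observe that the proof of Theorem \ref{Main theo} uses $u_0\not\equiv 0$ only to ensure $\int_{\mathbb{R}^n}\big(u_1(x)-\lambda'(0)u_0(x)\big)\varphi(x)\,dx>0$ and the positivity of $\mathscr{G}(t)$ for $t>0$, both of which already follow from $(u_0,u_1)$ being nonnegative and not both identically zero.
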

\begin{rmk} In Corollary \ref{cor damping 1} we used the property $p_0(n)>p_{\Fuj}(n-1)$ for any $n>1$ to show that for $\mu_1\in [0,1)$ it holds
\begin{align*}
p_{\mu_1}(n)= \max\big\{p_0(n+\mu_1),p_{\Fuj}(n+\mu_1-1)\big\}=p_0(n+\mu_1).
\end{align*} Moreover, in the special case $n=1$ and $\mu_1=0$ we introduce as usual $p_0(1)=\infty$, since solutions blow up, in general, for any $p>1$.
\end{rmk}
\begin{cor}\label{cor damping 2}  Let $n\geq 1$ and $\mu_1\in (1,2]$.
Let us assume that $p>1$ satisfies one of the following conditions:
\begin{align*}
p&<p_{\mu_1}(n):= \max\big\{p_0(n+\mu_1),p_{\Fuj}(n)\big\}, \\
p&=p_{\mu_1}(n)= p_{\Fuj}(n) ,\\
 p&=p_{\mu_1}(n)=p_0(n+\mu_1) , \qquad \mbox{if} \,\, n=2.
\end{align*} Moreover, let $v\in \mathcal{C}^2([0,T)\times\mathbb{R}^n)$ be a classical solution to the Cauchy problem \eqref{CP semi scale inv mu2=0} with nontrivial and compactly supported initial data $(v_0,v_1) \in \mathcal{C}^2(\mathbb{R}^n) \times \mathcal{C}^1(\mathbb{R}^n) $ such that
\begin{align*}
v_0\geq 0, \qquad v_1+(\mu_1-1)v_0 \geq 0.
\end{align*}
Then $v$ blows up in finite time, that is, $T<\infty$.
\end{cor}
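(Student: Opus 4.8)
The plan is to obtain Corollary~\ref{cor damping 2} as a direct specialization of Corollary~\ref{Blow up cor Tric eq} to the massless case $\mu_2^2=0$, so that no new blow-up argument is needed: Corollary~\ref{Blow up cor Tric eq} has itself been deduced from Theorem~\ref{Main theo} (which rests on Kato's lemma and the change of variables \eqref{transformation u,v}), and all that remains is to check that, under the hypotheses stated here, its assumptions hold and its conclusions collapse to the present statement. Setting $\mu_2^2=0$ gives $\delta=(\mu_1-1)^2$, and since $\mu_1\in(1,2]$ is equivalent to $\mu_1-1\in(0,1]$, we get at once $\delta\in(0,1]$; thus the standing hypothesis on $\delta$ in Corollary~\ref{Blow up cor Tric eq} is satisfied (the endpoint $\mu_1=2$, corresponding to $\delta=1$, being admissible).

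Next I would run through the three quantities appearing in Corollary~\ref{Blow up cor Tric eq}. From $\delta=(\mu_1-1)^2$ and $\mu_1>1$ we have $\sqrt{\delta}=\mu_1-1$, hence $n+\tfrac{\mu_1-1}{2}-\tfrac{\sqrt{\delta}}{2}=n$, so the shifted Fujita exponent becomes $p_{\Fuj}(n)$, while the Strauss-type threshold $p_0(n+\mu_1)$ is left untouched. Consequently $p_{\mu_1,\mu_2}(n)=\max\{p_0(n+\mu_1),p_{\Fuj}(n)\}=p_{\mu_1}(n)$, and the subcritical range, the critical Fujita case, and the critical Strauss case (with its caveat ``$n=2$'') translate verbatim into the three cases listed in the statement. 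For the data, the sign condition $v_1+\tfrac{\mu_1-1+\sqrt{\delta}}{2}\,v_0\geq0$ of Corollary~\ref{Blow up cor Tric eq} becomes $v_1+(\mu_1-1)v_0\geq0$, and together with $v_0\geq0$, nontriviality and compact support of $(v_0,v_1)$, these are precisely the hypotheses assumed above. Invoking Corollary~\ref{Blow up cor Tric eq} then gives $T<\infty$.

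I do not anticipate any real obstacle: the argument is pure specialization plus elementary algebra. The only points requiring care are the equivalence $\mu_1\in(1,2]\Leftrightarrow\delta\in(0,1]$, which is what makes the whole framework applicable in the first place, and the faithful transcription of the dimensional restriction in the critical Strauss case (``$n=2$'', not ``$n\geq2$'') — both of which are immediate once the substitution $\sqrt{\delta}=\mu_1-1$ is made.
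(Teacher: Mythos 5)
Your proposal is correct and follows exactly the paper's route: the paper states that Corollary \ref{cor damping 2} follows immediately from Corollary \ref{Blow up cor Tric eq} by taking $\mu_2=0$, so that $\delta=(\mu_1-1)^2\in(0,1]$, $\sqrt{\delta}=\mu_1-1$, the shifted Fujita exponent reduces to $p_{\Fuj}(n)$, and the sign condition becomes $v_1+(\mu_1-1)v_0\geq 0$ — precisely the specialization you carried out.
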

\begin{rmk} In Corollaries \ref{cor damping 1} and \ref{cor damping 2} the exponent $p_{\mu_1}(n)$ is nothing but the exponent $p_{\mu_1,\mu_2}(n)$ for $\mu_2=0$.
\end{rmk}

\section{Overview on our approach}\label{Section our approach}
\setcounter{equation}{0}
Throughout this article we will consider the time-dependent function
\begin{align}\label{definiition of G(t)}
\mathscr{G}(t)=\int_{\mathbb{R}^n}u(t,x) dx,
\end{align} where $u$ is a classical solution of \eqref{CP semi Tric eq}.

Since we require compactly supported data, by the property of finite speed of propagation it follows that also $u$ is compactly supported with respect to the spatial variables for any time up to its life-span. Therefore, if we prove that $\mathscr{G}$ blows up in finite time, then $u$ blows up in finite time as well.

A fundamental tool to estimate the blow-up dynamic of the function $\mathscr{G}(t)$ is the so-called Kato's Lemma.

\begin{lemma} [Kato's Lemma]\label{Kato's lemma}
Let $p>1,$ $q \in \mathbb{R}$ and $F \in \mathcal{C}^2([0,T))$ be a positive function satisfying
the nonlinear ordinary differential inequality
\begin{align} \label{condition on 2nd derivative of F Kato}
\frac{d^2}{dt^2}F(t) \geq k_1(t+R)^{-q}(F(t))^p
\end{align}
for any $t \in [T_1,T),$ for some $k_1,R >0$ and $T_1 \in [0,T).$
\begin{itemize}
\item[\rm{(i)}] If it holds the inequality
\begin{align} \label{condition on  of F Kato}
F(t) \geq k_0(t+R)^{a} \qquad \mbox{for any} \,\,\, t \in [T_0,T),
\end{align}
 for some $a\geq 1$ satisfying $ a > \frac{q-2}{p-1}$
and for some $k_0>0$ and $T_0 \in [0,T),$ then $T < \infty$.
\item[\rm{(ii)}] Let $q \geq p+1$
in \eqref{condition on 2nd derivative of F Kato} and suppose that the constant $k_0=k_0(k_1)>0$ is sufficiently large. Then, if
\eqref{condition on  of F Kato} holds with $a = \frac{q-2}{p-1}$ for some $T_0 \in [0,T),$ then $T < \infty$.
\end{itemize}
\end{lemma}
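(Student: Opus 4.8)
The plan is to treat both parts by the classical ODE comparison argument due to Kato, working with the quantity $F$ along the interval where the differential inequality \eqref{condition on 2nd derivative of F Kato} holds. Throughout I would normalize by shifting time so that I may assume $R$ absorbed where convenient, and I would freely use that $F>0$ together with $F''\geq 0$ on $[T_1,T)$ (a consequence of \eqref{condition on 2nd derivative of F Kato} since the right-hand side is nonnegative), so that $F$ is convex and $F'$ is nondecreasing there. Two preliminary reductions: first, by enlarging $T_1$ if needed I may assume $T_0=T_1$ (using \eqref{condition on of F Kato} to keep the lower bound valid), and second, convexity gives that either $F'$ is eventually positive — in which case $F$ is increasing and we can exploit the lower bound — or $F'\le 0$ throughout, which one checks is incompatible with the polynomial lower bound \eqref{condition on of F Kato} when $a\ge 1$ unless $T<\infty$ already. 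So the real work is the case $F$ increasing.

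For part (i), the standard device is to multiply \eqref{condition on 2nd derivative of F Kato} by $F'(t)>0$ and integrate from $T_1$ to $t$, obtaining
\[
\tfrac12\bigl((F'(t))^2-(F'(T_1))^2\bigr)\;\geq\; k_1\int_{T_1}^t (s+R)^{-q}(F(s))^p F'(s)\,ds.
\]
Here I would split into the subcase $q\le 0$, where $(s+R)^{-q}\ge (T_1+R)^{-q}$ is bounded below and the right side is $\gtrsim (F(t))^{p+1}$ directly, and the subcase $q>0$, where I instead insert the lower bound \eqref{condition on of F Kato} in the form $F(s)\ge k_0(s+R)^a$ into one factor of $F(s)^p$ and keep the rest, using $a>\frac{q-2}{p-1}$ to guarantee the resulting exponent on $(s+R)$ makes the time integral grow like a positive power of $t$. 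Either way one arrives at a differential inequality of the form $F'(t)\gtrsim (F(t))^{(p+1)/2}(t+R)^{-\gamma}$ with $\gamma$ small enough that $\int^\infty (t+R)^{-\gamma}\,dt=\infty$ fails to save us; separating variables and integrating the resulting bound on $-\frac{d}{dt}(F(t))^{-(p-1)/2}$ shows $F(t)^{-(p-1)/2}$ reaches $0$ in finite time, i.e. $T<\infty$.

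For part (ii), where $q\ge p+1$ and $a=\frac{q-2}{p-1}$ is the borderline value, the plain multiplier trick loses the needed margin and one must bootstrap. The idea is iterative: starting from \eqref{condition on of F Kato} with exponent $a$, feed it into \eqref{condition on 2nd derivative of F Kato} and integrate twice to get a new lower bound $F(t)\ge C_1(t+R)^{a}$ — note the exponent is reproduced exactly precisely because $a=\frac{q-2}{p-1}$, i.e. $ap-q+2=a$ — but with an improved constant $C_1$, and one tracks how the constant transforms, $C_{j+1}=c\,k_1\,C_j^{\,p}/(\text{polynomial in }a,p,q)$. Because $k_0$ is taken sufficiently large (depending on $k_1$ as stated), the constants $C_j$ can be shown to grow without bound as $j\to\infty$ at a fixed finite time, or equivalently a single application already pushes past a threshold beyond which the part (i) mechanism with strict inequality $a>\frac{q-2}{p-1}$ kicks in; either formulation yields $T<\infty$. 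The technical heart — and the step I expect to be the main obstacle — is carrying out this iteration cleanly: keeping the double time-integration estimates uniform, verifying the exponent is exactly self-reproducing, and showing the constant recursion $C_{j+1}\sim k_1 C_j^p$ forces blow-up of $\sum$ or of the $C_j$ at finite time given the largeness of $k_0$. Once that recursion is set up correctly the conclusion is immediate, but getting the bookkeeping of constants and the integration limits right (so that the lower bound \eqref{condition on of F Kato} can legitimately be re-inserted at each stage on the shrinking-but-nonempty interval) is where the care is needed.
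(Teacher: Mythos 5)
First, a point of reference: the paper does not prove this lemma at all; it cites \cite{Sid84,Yag05} for part (i) and \cite{Yor06,DabbLucRei15} for part (ii). Your sketch of part (i) is essentially the standard Sideris-type multiplier argument used in those references (multiply by $F'>0$, integrate, insert the polynomial lower bound into a \emph{fractional} power of $F^p$ chosen using $a>\frac{q-2}{p-1}$ so that the resulting exponent of $(t+R)$ is $\geq -1$ while the exponent of $F$ stays $>1$, then separate variables), and apart from some loose exponent bookkeeping (inserting the bound into literally ``one factor'' of $F^p$, i.e.\ exponent $1$, does not work for all admissible $p,q,a$; you need a fractional exponent in the interval $\big(\frac{q-2}{a},p-1\big)$) it is correct.

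Part (ii), however, has a genuine gap. Your proposed mechanism is an iteration $F\geq C_j(t+R)^a \Rightarrow F\geq C_{j+1}(t+R)^a$ with $C_{j+1}\simeq k_1C_j^p$, claiming either that the constants diverge ``at a fixed finite time'' or that ``a single application already pushes past a threshold beyond which the part (i) mechanism kicks in.'' The second alternative is simply false: because $ap-q+2=a$ exactly, each application reproduces the exponent $a=\frac{q-2}{p-1}$ and never a strictly larger one, so the hypothesis $a>\frac{q-2}{p-1}$ of part (i) is never reached by iterating the nonlinear bound. The first alternative cannot be carried out either: when you integrate $F''\geq k_1C_j^p(s+R)^{ap-q}$ twice, the boundary terms at the lower limit force the improved bound $F\geq C_{j+1}(t+R)^a$ to hold only for $t\geq T_{j+1}$ with $T_{j+1}+R\geq \lambda(T_j+R)$, $\lambda>1$ (near $t=T_j$ the double integral vanishes, so no improved constant is available there). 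Hence, under the assumption $T=\infty$, one obtains $C_j\to\infty$ \emph{and} $T_j\to\infty$ simultaneously, which is not contradictory (e.g.\ $F(t)=e^t$ satisfies such a family of bounds); the ``bookkeeping'' you flag as the main obstacle is precisely what cannot be fixed in this form. The actual proof in the cited references exploits the criticality differently: since $(p-1)a=q-2$, one linearizes, $(t+R)^{-q}F^p\geq k_0^{p-1}(t+R)^{-2}F$, so that $F''\geq k_1k_0^{p-1}(t+R)^{-2}F$, an Euler-type \emph{linear} differential inequality. For $k_0$ large the comparison exponent $m$ with $m(m-1)=k_1k_0^{p-1}$ satisfies $m>a$, whence $F\gtrsim (t+R)^{m}$ with a strictly supercritical exponent, and then part (i) applies. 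This is where the largeness of $k_0=k_0(k_1)$ (and, via $a=\frac{q-2}{p-1}\geq 1$, the hypothesis $q\geq p+1$) genuinely enters — not through divergence of a constant recursion.
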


For the proof of the previous result one can see \cite{Sid84,Yag05} for the subcritical case and \cite{Yor06,DabbLucRei15} for the critical case.

The strategy in the proof of Theorem \ref{Main theo} is to apply Kato's lemma to the function $\mathscr{G}(t)$ in order to prove that the life span of such a function and, consequently, the life span of the solution $u$, has to be necessarily finite for $p$ as in the statement. We prove the sub-critical and the critical case of Theorem \ref{Main theo} in Sections \ref{Section subcritical case} and \ref{Section Blowup critical case}, respectively. In Section \ref{Section repres formula Tric eq} we derive an explicit integral representation formula for solutions to the inhomogeneous linear Cauchy problem in 1d related to \eqref{CP semi Tric eq}. Here we strongly follow Yagdjian's Integral Transform Approach of the papers \cite{Yag04,YagGal09,Yag15,Yag16}. Finally, in Section \ref{Section corollaries for the scale inv case} we prove Corollary \ref{Blow up cor Tric eq}. Some concluding remarks and open problems (see Section \ref{Sectionconcludingremarks}) complete the paper.

\section{Subcritical case}\label{Section subcritical case}
\setcounter{equation}{0}

Let us prove the blow-up result for \eqref{CP semi Tric eq} in the subcritical case, that is, when we employ Kato's lemma for the case in which the exponent $a$ in \eqref{condition on  of F Kato} satisfies $a>\frac{q-2}{p-1}$. This condition corresponds to the requirement  \eqref{condition on p subcritical} in the statement of Theorem \ref{Main theo}.

\begin{proof}[Proof of Theorem \ref{Main theo} in the subcritical case]
Let $u$ be the classical local (in time) solution to the Cauchy problem \eqref{CP semi Tric eq}.
We define  the function $\mathscr{G}=\mathscr{G}(t)$ as in \eqref{definiition of G(t)}.
Let us choose a positive constant $R$ such that $\supp u_0, \supp u_1 \subset B_R$.

In the following we will employ the functions $A=A(t)$ and $\phi=\phi(t)$ are introduced in Section \ref{subsection notations}.

Thanks to the property of finite speed of propagation, we have
\begin{align}\label{compact support property solution}
\supp u(t,\cdot)\subset B_{R+A(t)} \qquad \mbox{for any} \,\,  t>0.
\end{align}

Therefore,
\begin{align*}
\frac{d^2 \mathscr{G}}{dt^2}(t)&=\int_{\mathbb{R}^n}u_{tt}(t,x) dx =(1+t)^{2\ell} \int_{\mathbb{R}^n}\Delta u(t,x) dx+(\ell+1)^2(1+t)^k\int_{\mathbb{R}^n}|u(t,x)|^p dx\\ &=(\ell+1)^2(1+t)^k\int_{\mathbb{R}^n}|u(t,x)|^p dx,
\end{align*} where in the last equality we used the divergence theorem and \eqref{compact support property solution}.

In particular, from the previous equality we can derive the positivity of $\mathscr{G}$. Indeed, using the convexity of $\mathscr{G}$ we get
\begin{align*}
\mathscr{G}(t)
\geq \mathscr{G}(0)+t \, \frac{d \mathscr{G}}{dt}(0) = \int_{\mathbb{R}^n} u_0(x) dx +t  \int_{\mathbb{R}^n} u_1(x) dx >0 \qquad \mbox{for} \,\, t\geq 0\, .
\end{align*}

By using Jensen's inequality we find for large $t$
\begin{align}
\frac{d^2 \mathscr{G}}{dt^2}(t)=(\ell+1)^2(1+t)^k\int_{\mathbb{R}^n}|u(t,x)|^p dx&\gtrsim (1+t)^k (R+A(t))^{-n(p-1)}\Big|\int_{\mathbb{R}^n}u(t,x) dx\Big|^p \notag\\ & \simeq (R+t)^{k-(\ell+1)n(p-1)} (\mathscr{G}(t))^p. \label{condition on G'' Kato}
\end{align}

In order to apply Lemma \ref{Kato's lemma} we need to derive a lower bound for $\mathscr{G}$ for large times. To get this bound from below we shall introduce a second time-dependent function $\mathscr{G}_1=\mathscr{G}_1(t)$ defined as the integral over $\mathbb{R}^n$ of the product between $u$ and a function $\psi=\psi(t,x)$. Therefore, let us choose the $t$-dependent factor and the $x$-dependent factor of $\psi$.

For this reason we consider the function $\lambda=\lambda(t)$ with
\begin{align}
\lambda(t) &=C_\ell (1+t)^{\frac{1}{2}}\mathcal{K}_{\nu}(\phi(1+t)) \notag \\ &=C_\ell (1+t)^{\frac{1}{2}} \int_0^\infty \exp\Big(-\tfrac{(1+t)^{\ell+1}}{\ell+1}\cosh z\Big)\cosh(\nu z) dz \qquad \mbox{with} \;\; \nu=\tfrac{1}{2(\ell+1)},\label{lambda function definition}
\end{align}  where $\mathcal{K}_\nu$ denotes the modified Bessel function of second kind (cf. \cite[Section 9.6]{AS84}) and the multiplicative constant $C_\ell$ is chosen in such a way that $\lambda(0)=1$.

The function $\lambda=\lambda(t)$ is a slight modification of the corresponding function considered in \cite{HeWittYin17}. More specifically, the function $\lambda$ is a translation of the function which is considered there (of course with different multiplicative constants).

The function $\lambda$, defined by \eqref{lambda function definition}, satisfies the equation
\begin{align*}
\lambda'' (t)-(1+t)^{2\ell}\lambda(t)=0.
\end{align*}

 This is a consequence of the fact that $\mathcal{K}_\nu$ is a solution of the modified Bessel differential equation
$$\Big(t^2 \frac{d^2}{dt^2}+t\frac{d }{dt} -(t^2+\nu^2)\Big)\mathcal{K}_\nu (t)=0.$$

Furthermore, the function $\lambda$ satisfies the following properties:
\begin{itemize}
\item $\lambda$ and $-\lambda'$ are decreasing functions and $\displaystyle{\lim_{t\to\infty}\lambda(t)=\lim_{t\to\infty}\lambda'(t)=0}$;
\item $|\lambda'(t)|\asymp \lambda(t) (1+t)^\ell$ uniformly for all $t\geq 0$.
\end{itemize}
For the proof of these two properties one can see \cite{HongLi96} (even though there is only the case $\ell\in\mathbb{N}$ considered, the same proof is valid for any real number $\ell\geq 0$).

Let us choose the function $$\varphi=\varphi(x)=\int_{\mathbb{S}^{n-1}}e^{x\cdot \omega} d\sigma_\omega.$$
The function $\varphi$ has the following properties:
\begin{itemize}
\item $\varphi\in \mathcal{C}^\infty (\mathbb{R}^n)$ and $\partial^\alpha_x \varphi(x)=\int_{\mathbb{S}^{n-1}}\omega^\alpha e^{x\cdot \omega} d\sigma_\omega$ for any multi-index $\alpha$, and in particular $\Delta \varphi=\varphi$;
\item $\varphi>0$ on $\mathbb{R}^n$.
\end{itemize}

The first property follows by the compactness of the unit sphere $\mathbb{S}^{n-1}$, while Cauchy-Schwartz inequality and the monotonicity of the exponential function imply the second one.

 Finally, we introduce the function
\begin{align*}
\psi(t,x)=\lambda(t)\varphi(x).
\end{align*} Due to the properties of both functions $\lambda$ and $\varphi$ we obtain that $\psi$ is a solution of the homogeneous linear equation
\begin{align*}
\psi_{tt}-(1+t)^{2\ell}\Delta\psi =0.
\end{align*}

As we anticipated, we define $$\mathscr{G}_1(t):=\int_{\mathbb{R}^n}u(t,x) \psi(t,x) dx.$$

So, using H\"{o}lder's inequality to estimate $\frac{d^2 \mathscr{G}}{dt^2}$ we get
\begin{align}
\frac{d^2 \mathscr{G}}{dt^2}(t)&=(\ell+1)^2(1+t)^k\int_{\mathbb{R}^n}|u(t,x)|^p dx \gtrsim (1+t)^k |\mathscr{G}_1(t)|^p\Big(\int_{B_{R+A(t)}(0)}\psi(t,x)^\frac{p}{p-1} dx\Big)^{-(p-1)}\label{estimate from below of G''}.
\end{align}

We estimate separately the two factors that appear in the right-hand side of \eqref{estimate from below of G''}.
Let us start with the integral
\begin{align*}
\int_{B_{R+A(t)}(0)}\psi(t,x)^\frac{p}{p-1} dx=\lambda(t)^{\frac{p}{p-1}}\int_{B_{R+A(t)}(0)}\varphi(x)^\frac{p}{p-1} dx.
\end{align*}

According to  \cite[page 364]{Yor06} the following asymptotic estimate holds:
\begin{align*}
\varphi(x) \simeq |x|^{-\frac{n-1}{2}}e^{|x|} \qquad \mbox{as} \,\, |x|\to \infty.
\end{align*}
The continuity of the function $\varphi$ implies the estimate
\begin{align*}
|\varphi(x)|\lesssim (1+|x|)^{-\frac{n-1}{2}}e^{|x|}
\end{align*}  uniformly for any $x\in\mathbb{R}^n$.


Therefore,
\begin{align*}
\int_{B_{R+A(t)}(0)}& \varphi(x)^\frac{p}{p-1} dx  \lesssim \int_0^{R+A(t)}(1+r)^{-\frac{n-1}{2}\cdot \frac{p}{p-1}} e^{\frac{p r}{p-1}}r^{n-1} dr \\
 \lesssim & \,\, e^{\frac{p}{p-1}\cdot\frac{R+A(t)}{2}}\!\int_0^{(R+A(t))/2}\!(1+r)^{n-1-\frac{n-1}{2}\cdot \frac{p}{p-1}}  dr +(R+A(t))^{n-1-\frac{n-1}{2}\cdot \frac{p}{p-1}}e^{\frac{p}{p-1}\cdot(R+A(t))}.
\end{align*}
For the integral over $[0,(R+A(t))/2]$ it is possible to find the same estimate we got for the integral over $[(R+A(t))/2,R+A(t)]$.

Indeed,
\begin{align*}
\int_0^{(R+A(t))/2}(1+r)^{n-1-\frac{n-1}{2}\cdot \frac{p}{p-1}}  dr \lesssim \begin{cases} (R+A(t))^{n-\frac{n-1}{2}\cdot \frac{p}{p-1}} & \mbox{if} \,\,\, n-\frac{n-1}{2}\cdot \frac{p}{p-1}>0 \\ \log (R+A(t)) & \mbox{if} \,\,\, n-\frac{n-1}{2}\cdot \frac{p}{p-1}=0 \\ 1 & \mbox{if} \,\,\, n-\frac{n-1}{2}\cdot \frac{p}{p-1}<0 \end{cases}
\end{align*} and, consequently,
\begin{align*}
e^{\frac{p}{p-1}\cdot\frac{R+A(t)}{2}}\!\int_0^{(R+A(t))/2}\!(1+r)^{n-1-\frac{n-1}{2}\cdot \frac{p}{p-1}}  dr  &=e^{\frac{p}{p-1}\cdot(R+A(t))} e^{-\frac{p}{p-1}\cdot\frac{R+A(t)}{2}} \!\int_0^{(R+A(t))/2}\!(1+r)^{n-1-\frac{n-1}{2}\cdot \frac{p}{p-1}}  dr \\ &\lesssim e^{\frac{p}{p-1}\cdot(R+A(t))} 
 (R+A(t))^{n-1-\frac{n-1}{2}\cdot \frac{p}{p-1}}.
\end{align*}

Summarizing, we proved
\begin{align*}
\Big(\int_{B_{R+A(t)}(0)}\psi(t,x)^\frac{p}{p-1} dx \Big)^{p-1} &\lesssim \lambda(t)^p \,e^{\,p(R+A(t))}  \,(R+A(t))^{(n-1)(p-1)-\frac{n-1}{2} p} .
\end{align*}

Then, using the asymptotic behavior of modified Bessel function of second kind  $$\mathcal{K}_\nu (t) = \sqrt{\frac{\pi}{2t}} e^{-t}\big(1+O(t^{-1})\big) \qquad \mbox{as} \;\; t\to \infty,$$ (see for example \cite[formula 9.7.2]{AS84}), then for large $t$ we may estimate
\begin{align*}
\lambda(t)\asymp  (1+t)^{\frac{1}{2}} \phi(1+t)^{-\frac{1}{2}} e^{-\phi(1+t)}\simeq (1+t)^{-\frac{\ell}{2}} e^{-\phi(1+t)}.
\end{align*}

Thus,
\begin{align}
\Big(\int_{B_{R+A(t)}(0)}\psi(t,x)^\frac{p}{p-1} dx \Big)^{p-1} &\lesssim (1+t)^{-\frac{\ell p}{2}} e^{-p\phi(1+t)} e^{p(R+A(t))}  (R+A(t))^{(n-1)(p-1)-\frac{n-1}{2}\, p}\notag \\ & \simeq (1+t)^{-\frac{\ell p}{2}} (R+A(t))^{(n-1)(p-1)-\frac{n-1}{2}\, p}. \label{upper bound for psi integral}
\end{align}

Let us derive now a lower bound for $\mathscr{G}_1(t)$. We point out that this estimate is slightly easier in our case than that one in \cite[Lemma 2.3]{HeWittYin17} because of the non zero behavior of the speed of propagation $a(t)=(1+t)^\ell$ in a neighborhood of $0$.


Integrating by parts we get
\begin{align*}
0 &\leq \int_0^t \int_{\mathbb{R}^n}(\ell+1)^2 (1+s)^k |u(s,x)|^p \psi(s,x) dx ds = \int_0^t \int_{\mathbb{R}^n}\big(u_{ss}(s,x)-(1+s)^{2\ell}\Delta u(s,x)\big) \psi(s,x) dx ds  \\ &=\int_0^t \int_{\mathbb{R}^n}u(s,x)\underbrace{\big(\psi_{ss}(s,x)-(1+s)^{2\ell}\Delta \psi(s,x)\big)}_{=0}  dx ds+ \int_{\mathbb{R}^n} \big(u_s(s,x)\psi(s,x)-u(s,x)\psi_s(s,x)\big) dx \,\, \Big|^{s=t}_{s=0}\\&= \int_{\mathbb{R}^n} \big(u_t(t,x)\psi(t,x)-u(t,x)\psi_t(t,x)\big) dx -\int_{\mathbb{R}^n} \big(u_1(x)-\lambda'(0)u_0(x)
\big)\varphi(x) dx,
\end{align*} where in the second equality for the integration by parts with respect to $x$ we may neglect the boundary integral because of \eqref{compact support property solution}.

Since we have assumed nonnegative data (the first data is not identically zero) and $\lambda'(0)<0$ it follows
\begin{align*}
\int_{\mathbb{R}^n} \big(u_t(t,x)\psi(t,x)-u(t,x)\psi_t(t,x)\big) dx \geq \int_{\mathbb{R}^n} \big(u_1(x)-\lambda'(0)u_0(x)
\big)\varphi(x) dx =C >0.
\end{align*}

On the other hand
\begin{align*}
\int_{\mathbb{R}^n} \big(u_t(t,x)\psi(t,x)-u(t,x)\psi_t(t,x)\big) dx &=\int_{\mathbb{R}^n} \Big(\tfrac{\partial}{\partial t}\big(u(t,x)\psi(t,x)\big)-2u(t,x)\psi_t(t,x)\Big) dx 
\\ &= \frac{d\mathscr{G}_1}{dt}(t)+2|\lambda'(t)|\int_{\mathbb{R}^n} u(t,x)\varphi(x)dx.
\end{align*} We have already mentioned the inequality $|\lambda'(t)|\leq C_1 \lambda(t)(1+t)^\ell$ for $t\geq 0$ which holds for some positive constant $C_1$. Therefore,
\begin{align*}
C &\leq   \frac{d\mathscr{G}_1}{dt}(t)+2 C_1 \lambda(t)(1+t)^{\ell}\int_{\mathbb{R}^n} u(t,x)\varphi(x)dx= \frac{d\mathscr{G}_1}{dt}(t)+2 C_1 (1+t)^{\ell}\mathscr{G}_1(t).
\end{align*}

Multiplying both sides of the previous inequality by $e^{2 C_1 A(t)}$ we get
\begin{align*}
\frac{d}{dt}\big(e^{2C_1 A(t)}\mathscr{G}_1(t)\big)\geq C e^{2C_1 A(t)}.
\end{align*}  After integration over $[0,t]$ it follows
\begin{align*}
e^{2C_1 A(t)}\mathscr{G}_1(t)-\mathscr{G}_1(0) &\geq C \int_0^t e^{2C_1 A(s)} ds \geq \frac{C}{2 C_1}(1+t)^{-\ell}\int_0^t 2C_1 (1+s)^\ell e^{2C_1 A(s)} ds \\ & = \frac{C}{2 C_1}(1+t)^{-\ell} (e^{2C_1 A(t)}-1).
\end{align*}
Hence,
\begin{align*}
\mathscr{G}_1(t)&\geq e^{-2C_1 A(t)}\mathscr{G}_1(0)+\frac{C}{2 C_1}(1+t)^{-\ell} -\frac{C}{2 C_1}(1+t)^{-\ell}e^{-2C_1 A(t)}\geq \frac{C}{2 C_1}(1+t)^{-\ell} (1-e^{-2C_1 A(t)}).
\end{align*}
Summarizing, for large $t$ we have
\begin{align}\label{lower bound G_1}
\mathscr{G}_1(t) \gtrsim (1+t)^{-\ell}.
\end{align}
Combining \eqref{estimate from below of G''}, \eqref{upper bound for psi integral} and \eqref{lower bound G_1}, then for large $t$ we arrive at
\begin{align}
\frac{d^2 \mathscr{G}}{dt^2}(t)&\gtrsim (1+t)^{k-\frac{\ell p}{2}}(R+A(t))^{-(n-1)(p-1)+\frac{n-1}{2} \,p}\gtrsim (R+t)^{k-\frac{\ell p}{2}-(n-1)(\frac{p}{2}-1)(\ell+1)} .  \label{condition on G'' Kato, pol growth}
\end{align}

Integrating twice the previous relation, we get
\begin{align}
\mathscr{G}(t)&\gtrsim \mathscr{G}(0)+\frac{d \mathscr{G}}{dt}(0)\, t+(R+t)^{\max\{k-\frac{\ell p}{2}-(n-1)(\frac{p}{2}-1)(\ell+1)+2,1\}} \notag\\ &\gtrsim (R+t)^{\max\{k-\frac{\ell p}{2}-(n-1)(\frac{p}{2}-1)(\ell+1)+2,1\}} \qquad \mbox{for large}\,\, t. \label{condition on G Kato}
\end{align}

Finally, we can apply Kato's lemma. So from \eqref{condition on G'' Kato} and \eqref{condition on G Kato} we obtain that $\mathscr{G}(t)$ (and, consequently, $u$) blows up provided that at least one of the following condition is fulfilled:
\begin{align}
k-\frac{\ell p}{2}-(n-1)\Big(\frac{p}{2}-1\Big)(\ell+1)+2>-\frac{k+2}{p-1}+(\ell+1)n,  \label{1st condition p blow up Tric}\\
 1>-\frac{k+2}{p-1}+(\ell+1)n. \label{2nd condition p blow up Tric}
\end{align}

Since the condition \eqref{1st condition p blow up Tric} is equivalent to the quadratic inequality \eqref{inequality for p_0(n,l,k)} and \eqref{2nd condition p blow up Tric} corresponds to $p<p_1(n;\ell,k)$. The proof is completed.
\end{proof}
\begin{rmk}
Let us underline explicitly that in the previous proof the case $\ell=0$ can also be included.
In this case it is enough to consider  $\lambda(t)=e^{-t}$ instead of the function defined by \eqref{lambda function definition}. Indeed $e^{-t}$ satisfies all properties that we used in the proof for $\ell=0$. Moreover, this special case coincides exactly with the proof of Theorem 1.1 in \cite{Yor06}.
\end{rmk}
\begin{rmk}\label{remark log term} In the previous proof we applied Lemma \ref{Kato's lemma} \rm{(i)}, and, consequently, we do not need to take care of the multiplicative constant in \eqref{condition on G Kato}. However, when we are in the critical case, that is, we shall apply  Lemma \ref{Kato's lemma} \rm{(ii)}, the situation is quite different. Thus, rather than to estimate quantitatively the unexpressed multiplicative constant in \eqref{condition on G Kato}, following the approach of \cite{Yor06} we will obtain an improved version of \eqref{condition on G Kato} with a further logarithmic factor. In such a way we can immediately apply the second part of Lemma \ref{Kato's lemma}, since we can make the logarithmic term arbitrarily large considering sufficiently large times. However, before proving this improvement of \eqref{condition on G Kato}, we derive firstly an integral representation formula for a one spatial dimensional problem related to \eqref{CP semi Tric eq}, that will be helpful in the proof of the case \eqref{condition on p critical p0}.
\end{rmk}

\section{An integral representation formula for solutions to an inhomogeneous linear wave equation in 1d}  \label{Section repres formula Tric eq}
\setcounter{equation}{0}

The goal of this section is to provide an explicit representation formula for solutions to the Cauchy problem
\begin{align}\label{Tric type CP 1d}
\begin{cases} u_{tt}-(1+t)^{2\ell}u_{xx}=f(t,x), & t>0,\, x\in \mathbb{R}, \\ u(0,x)= u_0(x), & x\in \mathbb{R}, \\ u_t(0,x)= u_1(x), & x\in \mathbb{R},
\end{cases}
\end{align}
via integral transformations and to investigate the properties of the corresponding kernel functions.

In the further considerations we will follow the approach of \cite{Yag04} and \cite{YagGal09}, where an explicit representation formula is derived through integral transformations for solutions to a Cauchy problem for a generalized Tricomi equation (containing the Gellerstedt operator) and for solutions to a Cauchy problem for the Klein-Gordon equation in the de Sitter spacetime, respectively.

\subsection{Inhomogeneous case with zero data} \label{section repres formula inhom probl}
The first step is to construct a family of fundamental solutions for the operator $$\mathcal{T}=\frac{\partial^2}{\partial t^2}-(1+t)^{2\ell}\frac{\partial^2}{\partial x^2}$$ related to the point $(t_0,x_0)$, where $t_0\geq 0$ and $x_0\in\mathbb{R}$, and supported in the forward cone
\begin{align*}
D_+(t_0,x_0)=\Big\{(t,x)\in \mathbb{R}^2 : t > t_0,\, |x-x_0|< \tfrac{1}{\ell+1}\big((1+t)^{\ell+1}-(1+t_0)^{\ell+1}\big)  \Big\}.
\end{align*}

In other words, we are looking for a distributional solution $E_+(t,x;t_0,x_0)$ of the equation
\begin{align*}
\mathcal{T} E_+= \partial_t^2 E_{+}-(1+t)^{2\ell}\partial_x^2 E_{+}= \delta_{t_0}(t)\otimes\delta_{x_0}(x)
\end{align*} with support contained in $D_+(t_0,x_0)$.

For this purpose, introducing the so-called \emph{characteristic coordinates}
\begin{equation*}
\begin{split}
r &= x+ \phi(1+t),\\
s &= x- \phi(1+t),
\end{split} 
\end{equation*} and, following the approach of \cite{Yag04,YagGal09}, we get the representation
\begin{align*}
E_+=E_+(t,x;t_0,x_0)=\begin{cases}c_\ell E(t,x;t_0,x_0) & \mbox{in}\,\, D_+(t_0,x_0),\\ 0 & \mbox{elsewhere},\end{cases}
\end{align*} with  $c_\ell=2^{-\frac{1}{\ell+1}}(\ell+1)^{-\frac{\ell}{\ell+1}}$ and
\begin{align}
E(t,x;t_0,x_0)=\big( (\phi(1+t)+\phi(1+t_0))^2-(x-x_0)^2\big)^{-\gamma}F\Big(\gamma,\gamma;1;\frac{(\phi(1+t)-\phi(1+t_0))^2-(x-x_0)^2}{(\phi(1+t)+\phi(1+t_0))^2-(x-x_0)^2}\Big), \label{definition E(t,x,t0,x0)}
\end{align} where $\gamma=\frac{\ell}{2(\ell+1)}$ and $F(\gamma,\gamma; 1;\cdot)$ denotes the Gauss hypergeometric function (cf. Section \ref{Section Gauss hypergeo funct}).

Let $f=f(t,x)$ $\in\mathcal{C}^2(\mathbb{R}^2$) be a source term in \eqref{Tric type CP 1d} having the properties
\begin{itemize}
\item $\supp f \subset \{(t,x)\in \mathbb{R}^2: t\geq 0\}$,
\item $\supp f(t,\cdot)$ is compact for any $t\geq 0$.
\end{itemize}

Then, following the approach of \cite{Yag04,YagGal09} we may write a solution of the Cauchy problem \eqref{Tric type CP 1d} with vanishing data $u_0=u_1=0$ in the following form:
\begin{align*}
u(t,x)&=\iint_{\mathbb{R}^2} E_+(t,x;b,y)\, f(b,y) \,d(b,y) \\
&= c_\ell \int_0^t \int_{x-(\phi(1+t)-\phi(1+b))}^{x+(\phi(1+t)-\phi(1+b))}  E(t,x-y;b,0)\, f(b,y) \, dy \,db \\
&= c_\ell \int_0^t  \int_{x-(\phi(1+t)-\phi(1+b))}^{x+(\phi(1+t)-\phi(1+b))}    f(b,y) \, \Big((\phi(1+t)+\phi(1+b))^2-(x-y)^2\Big)^{-\gamma} \\ & \qquad \qquad \qquad\times F\Big(\gamma,\gamma;1; \frac{(\phi(1+t)-\phi(1+b))^2-(x-y)^2}{(\phi(1+t)+\phi(1+b))^2-(x-y)^2}\Big) \, dy \,db.
\end{align*}

Using the property $E(t,-y;b,0)=E(t,y;b,0)$ we may also write
\begin{align*}
u(t,x)
&= c_\ell \int_0^t  \int_{0}^{\phi(1+t)-\phi(1+b)}   [ f(b,x-y)+f(b,x+y) ] \, E(t,y;b,0) \, dy \,db\\
&= c_\ell \int_0^t \int_{0}^{\phi(1+t)-\phi(1+b)} [ f(b,x-y)+f(b,x+y) ] \,\Big((\phi(1+t)+\phi(1+b))^2-y^2\Big)^{-\gamma}  \\ & \qquad \qquad \qquad\times F\Big(\gamma,\gamma;1; \frac{(\phi(1+t)-\phi(1+b))^2-y^2}{(\phi(1+t)+\phi(1+b))^2-y^2}\Big)\, dy \,db.
\end{align*}

It is important to underline that the kernel function of the previous  integral is nonnegative on the  domain of integration. Indeed, the argument of the Gauss hypergeometric function is an element of the interval $[0,1)$ for any $(b,y)$ in the domain of integration and, consequently, the function $F(\gamma,\gamma;1,\cdot)$ is there always positive, since the parameters $(\gamma,\gamma;1)$ are all positive. Actually, for the limit case $\ell=0$ we have $\gamma=0$, but also in this case the Gauss function, which is just the constant function $1$, is nonnegative.

The previously defined function $u$ is a solution of \eqref{Tric type CP 1d} in the classical sense (even in the weak sense if all integrals are defined).

\subsection{Homogeneous case}

Up to now we derived a representation formula for the solution of the linear inhomogeneous Cauchy problem \eqref{Tric type CP 1d}, in the case in which the initial data $u_0,u_1$ are identically zero. The next step is to derive a representation formula for solutions to the linear homogeneous Cauchy problem
\begin{align}\label{Tric type CP 1d lin hom}
\begin{cases}
u_{tt}-(1+t)^{2\ell} u_{xx}=0,& t>0,\,\, x\in \mathbb{R},\\
u(0,x)=u_0(x), & x\in \mathbb{R},\\
u_t(0,x)=u_1(x), & x\in \mathbb{R}.
\end{cases}
\end{align}
We shall study separately the cases $u_0=0$ and $u_1=0$.

The strategy to derive these representations is to reduce the corresponding homogeneous linear Cauchy problem to a suitable inhomogeneous Cauchy problem. Then, applying the result from Section \ref{section repres formula inhom probl} and rewriting the obtained expressions in a appropriate way, we will get the desired representation formulas.

But first we are going to prove the following preliminary lemma which is helpful in next sections. In the further consideration we use the abbreviation $A(t):=\phi(1+t)-\phi(1)$.
\begin{lemma} Let $E=E(t,x;t_0,x_0)$ be the function defined by \eqref{definition E(t,x,t0,x0)}. Then, the following formulas are valid:
\begin{align}
E(t,x;b,y)&=E(b,x;t,y) ,\label{symmetry E t,b} \\
E(t,x;b,y)&=E(t,y;b,x) ,\label{symmetry E x,y} \\
E(t,x;b,y)&=E(b,x-y;t,0) ,\label{space invariance E} \\
E(t,-x;b,0)&=E(b,x;t,0) ,\label{even E in x, y=0} \\
E(t,A(t)-A(b);b,0)&=2^{-2\gamma} (\ell+1)^{2\gamma}(1+t)^{-\frac{\ell}{2}}(1+b)^{-\frac{\ell}{2}},\label{E in y=A(t)-A(b)} \\
\frac{\partial E}{\partial y}(t,y;b,0)\Big|_{y=A(t)-A(b)} &= 2^{-2\gamma-3} \ell(\ell+2) (\ell+1)^{2\gamma}\,(1+t)^{-\frac{3}{2}\ell-1}(1+b)^{-\frac{3}{2}\ell-1}\big(\phi(1+t)-\phi(1+b)\big) ,\label{dE/dy in y=A(t)-A(b)} \\
E(A^{-1}(A(t)-y),y;t,0) &=2^{-2\gamma} (\ell+1)^{\gamma} (1+t)^{-\frac{\ell}{2}}(\phi(1+t)-y)^{-\gamma}, \label{E in y=A^-1(A(t)-y)} \\
\frac{\partial E}{\partial b}(t,y;b,0)\Big|_{b=A^{-1}(A(t)-y)}&= -2^{-2\gamma-1}(\ell+1)^{2\gamma}(\phi(1+t)-y)^{\gamma-1}  \phi(1+t)^{-\gamma-1} \big(\gamma(2\phi(1+t)-y)+\gamma^2 y\big).\label{dE/db for b=A^-1(A(t)-y)}
\end{align}

\end{lemma}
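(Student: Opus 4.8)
The plan is to verify each of the eight identities \eqref{symmetry E t,b}--\eqref{dE/db for b=A^-1(A(t)-y)} directly from the closed-form expression \eqref{definition E(t,x,t0,x0)}, exploiting the structure
\[
E(t,x;t_0,x_0)=P^{-\gamma}\,F\!\left(\gamma,\gamma;1;\tfrac{N}{P}\right),\qquad
\begin{aligned}
P&=(\phi(1+t)+\phi(1+t_0))^2-(x-x_0)^2,\\
N&=(\phi(1+t)-\phi(1+t_0))^2-(x-x_0)^2.
\end{aligned}
\]
The first four identities are essentially free: both $P$ and $N$ depend on $t,t_0$ only through the unordered pair $\{\phi(1+t),\phi(1+t_0)\}$ and on $x,x_0$ only through $(x-x_0)^2$, so \eqref{symmetry E t,b} (swap $t\leftrightarrow t_0$), \eqref{symmetry E x,y} (swap $x\leftrightarrow x_0$), \eqref{space invariance E} (set $x_0=0$, replace $x$ by $x-y$), and \eqref{even E in x, y=0} (evenness in $x$ when $x_0=0$) all follow by inspection. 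I would state these four together and dispatch them in one or two lines.

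Next I would treat the "boundary of the cone" identities \eqref{E in y=A(t)-A(b)} and \eqref{dE/dy in y=A(t)-A(b)}. Here the key observation is that when $y=A(t)-A(b)=\phi(1+t)-\phi(1+b)$ one has $N=(\phi(1+t)-\phi(1+b))^2-y^2=0$, so the hypergeometric argument vanishes and $F(\gamma,\gamma;1;0)=1$. Then $E=P^{-\gamma}$ with
\[
P=(\phi(1+t)+\phi(1+b))^2-(\phi(1+t)-\phi(1+b))^2=4\,\phi(1+t)\phi(1+b),
\]
and substituting $\phi(1+t)=\tfrac{(1+t)^{\ell+1}}{\ell+1}$ gives \eqref{E in y=A(t)-A(b)} after collecting the powers of $(\ell+1)$ and recalling $\gamma=\frac{\ell}{2(\ell+1)}$. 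For \eqref{dE/dy in y=A(t)-A(b)} I would differentiate $E=P^{-\gamma}F(\gamma,\gamma;1;N/P)$ in $y$, use $F'(\gamma,\gamma;1;0)=\gamma^2$ (from the series, or the formula $\frac{d}{dz}F(a,b;c;z)=\frac{ab}{c}F(a+1,b+1;c+1;z)$), and note that $\partial_y N=\partial_y P=-2y$, so at the boundary point
\[
\partial_y E=-\gamma P^{-\gamma-1}\partial_y P+P^{-\gamma}\gamma^2\,\frac{(\partial_y N)P-N(\partial_y P)}{P^2}\Big|_{N=0}
=P^{-\gamma-1}(-2y)\bigl(-\gamma+\gamma^2\bigr).
\]
Wait---one must be careful: since at this point $N=0$ we need only the value and first $y$-derivative of $F$; plugging $y=\phi(1+t)-\phi(1+b)$ and $P=4\phi(1+t)\phi(1+b)$ and simplifying the $(1+t),(1+b)$ powers yields the claimed expression. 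I would just record the chain-rule computation and the final simplification.

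Finally, the "characteristic line" identities \eqref{E in y=A^-1(A(t)-y)} and \eqref{dE/db for b=A^-1(A(t)-y)}: the substitution $b=A^{-1}(A(t)-y)$ means $\phi(1+b)=\phi(1+t)-y$, so now $P-N$-type simplifications occur because $\phi(1+t)-\phi(1+b)=y$, i.e. the argument of $F$ is again $0$ in \eqref{E in y=A^-1(A(t)-y)}, with
\[
P=(\phi(1+t)+\phi(1+b))^2-y^2=(2\phi(1+t)-y)^2-y^2=4\phi(1+t)\bigl(\phi(1+t)-y\bigr),
\]
whence $E=P^{-\gamma}$ gives \eqref{E in y=A^-1(A(t)-y)} after inserting $\phi(1+t)=\frac{(1+t)^{\ell+1}}{\ell+1}$. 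For \eqref{dE/db for b=A^-1(A(t)-y)} I differentiate $E$ with respect to $b$ (with $t,y$ fixed), use $\partial_b\phi(1+b)=(1+b)^\ell=:\dot\phi_b$, compute $\partial_b P=2(\phi(1+t)+\phi(1+b))\dot\phi_b$ and $\partial_b N=-2(\phi(1+t)-\phi(1+b))\dot\phi_b$, and again evaluate using $F(\gamma,\gamma;1;0)=1$, $F'(\gamma,\gamma;1;0)=\gamma^2$ at the point where $\phi(1+b)=\phi(1+t)-y$. The main obstacle---though "obstacle" is generous---is purely bookkeeping: keeping track of the exponents of $(\ell+1)$, $(1+t)$, $(1+b)$ and of the factor $\dot\phi_b=(1+b)^\ell$ so that the right-hand sides come out exactly as stated, and in \eqref{dE/db for b=A^-1(A(t)-y)} correctly combining the two terms $-\gamma P^{-\gamma-1}\partial_bP$ and $\gamma^2 P^{-\gamma-2}(\partial_bN\cdot P-N\,\partial_bP)$ at $N=0$ into the single bracket $\gamma(2\phi(1+t)-y)+\gamma^2 y$. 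I would carry out this last computation in slightly more detail than the others, since it is the only genuinely non-obvious algebraic simplification, and leave the remaining ones as short verifications.
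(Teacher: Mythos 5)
Your proposal is correct and follows essentially the same route as the paper: the first four identities are read off from the dependence of the kernel on $(x-x_0)^2$ and on the symmetric pair $\phi(1+t),\phi(1+t_0)$, and the remaining four are obtained by direct evaluation/differentiation of $E=P^{-\gamma}F(\gamma,\gamma;1;N/P)$ using $F(\gamma,\gamma;1;0)=1$, the derivative rule \eqref{derivative hypergeometric function} (so $F'(\gamma,\gamma;1;0)=\gamma^2$), and the simplifications $P=4\phi(1+t)\phi(1+b)$ resp. $P=4\phi(1+t)(\phi(1+t)-y)$ together with $(1+b)^\ell=(\ell+1)^{2\gamma}(\phi(1+t)-y)^{2\gamma}$ at the special points. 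The only cosmetic difference is that you set $N=0$ before differentiating the hypergeometric factor, while the paper writes the full derivative and then evaluates; the bookkeeping and final formulas agree.
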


\begin{proof}
Properties \eqref{symmetry E t,b} to \eqref{even E in x, y=0} follow directly from the definition of $E(t,x;b,y)$.
Since $F(\gamma,\gamma;1;0)=1$ we get for $x=A(t)-A(b)=\phi(1+t)-\phi(1+b)$ the relation
\begin{align*}
E(t,x;b,0)= \big((\phi(1+t)+\phi(1+b))^2-(\phi(1+t)-\phi(1+b))^2\big)^{-\gamma}= 2^{-2\gamma}\big(\phi(1+t)\phi(1+b)\big)^{-\gamma}
\end{align*} and, thus, \eqref{E in y=A(t)-A(b)}.
The partial derivative of $E(t,y;b,0)$ with respect to $y$ is
\begin{align*}
\frac{\partial E}{\partial y}(t,y;b,0)
& =2\gamma y \big((\phi(1+t)+\phi(1+b))^2-y^2\big)^{-\gamma-1}F\Big(\gamma,\gamma;1;\frac{(\phi(1+t)-\phi(1+b))^2-y^2}{(\phi(1+t)+\phi(1+b))^2-y^2}\Big) \\ & \quad + \big((\phi(1+t)+\phi(1+b))^2-y^2\big)^{-\gamma}\gamma^2 F\Big(\gamma+1,\gamma+1;2;\frac{(\phi(1+t)-\phi(1+b))^2-y^2}{(\phi(1+t)+\phi(1+b))^2-y^2}\Big)\\ & \qquad \times\frac{\partial}{\partial y}  \Big(\frac{(\phi(1+t)-\phi(1+b))^2-y^2}{(\phi(1+t)+\phi(1+b))^2-y^2} \Big),
\end{align*} where we used the rule \eqref{derivative hypergeometric function} for the derivative of $F(\gamma,\gamma;1;\cdot)$. The partial derivative with respect to $y$ in the last line of the previous formula is
\begin{align*}
\frac{\partial}{\partial y}  \bigg(\frac{(\phi(1+t)-\phi(1+b))^2-y^2}{(\phi(1+t)+\phi(1+b))^2-y^2} \bigg)
 &=-\frac{8\phi(1+t)\phi(1+b)y}{\big(\big(\phi(1+t)+\phi(1+b)\big)^2-y^2\big)^2}.
\end{align*} This implies
\begin{align*}
\frac{\partial}{\partial y}  \bigg(\frac{(\phi(1+t)-\phi(1+b))^2-y^2}{(\phi(1+t)+\phi(1+b))^2-y^2} \bigg)\Big|_{y=A(t)-A(b)}&=-\frac{8\phi(1+t)\phi(1+b)\big(\phi(1+t)-\phi(1+b)\big)}{\big(4\phi(1+t)\phi(1+b)\big)^2}\\&=-\frac{\phi(1+t)-\phi(1+b)}{2\phi(1+t)\phi(1+b)}.
\end{align*}

Consequently, combining the previous relations we derive \eqref{dE/dy in y=A(t)-A(b)} in the following way:
\begin{align*}
\frac{\partial E}{\partial y}(t,y;b,0)&\Big|_{y=A(t)-A(b)}\notag\\ &= 2\gamma \big(\phi(1+t)-\phi(1+b)\big) \big(4 \phi(1+t)\phi(1+b)\big)^{-\gamma-1}F(\gamma,\gamma;1;0) \notag \\ & \quad -\tfrac{\gamma^2}{2}\big(4 \phi(1+t)\phi(1+b)\big)^{-\gamma} F(\gamma+1,\gamma+1;2;0)\,\frac{\phi(1+t)-\phi(1+b)}{\phi(1+t)\phi(1+b)} \notag \\ & = 2^{-2\gamma-1} \gamma (1-\gamma)\,\big(\phi(1+t)\phi(1+b)\big)^{-\gamma-1}\big(\phi(1+t)-\phi(1+b)\big) \notag \\ & = 2^{-2\gamma-3} \ell(\ell+2) (\ell+1)^{2\gamma}\,(1+t)^{-\frac{3}{2}\ell-1}(1+b)^{-\frac{3}{2}\ell-1}\big(\phi(1+t)-\phi(1+b)\big) .
\end{align*}

Let $A^{-1}(z)=((\ell+1)z+1)^{\frac{1}{\ell+1}}-1$ be the inverse function of $A$.

 Let us evaluate $E(b,y;t,0) $ and $\frac{\partial E}{\partial b}(b,y;t,0) $
for $b=A^{-1}(A(t)-y)=
\big((1+t)^{\ell+1}-(\ell+1)y\big)^{\frac{1}{\ell+1}}-1.$
Since for such value of $b$ it holds
\begin{align}
\phi(1+b)=\phi(1+t)-y, \label{phi(1+b)=phi(1+t)-y}
\end{align} we may conclude
\begin{align*}
E(A^{-1}(A(t)-y),y;t,0)&=\big((2\phi(1+t)-y)^2-y^2\big)^{-\gamma}F(\gamma,\gamma;1;0)\\& =2^{-2\gamma} (\ell+1)^{\gamma} (1+t)^{-\frac{\ell}{2}}(\phi(1+t)-y)^{-\gamma},
\end{align*} that is \eqref{E in y=A^-1(A(t)-y)}.

On the other hand, the partial derivative of $E(b,y;t,0)$ with respect to $b$ is
\begin{align}
\frac{\partial E}{\partial b}(b,y;t,0)&= -2\gamma(1+b)^\ell \big(\phi(1+b)+\phi(1+t)\big)\big((\phi(1+b)+\phi(1+t))^2-y^2\big)^{-\gamma-1}\notag\\ &\qquad\qquad \times F\Big(\gamma,\gamma;1;\frac{(\phi(1+b)-\phi(1+t))^2-y^2}{(\phi(1+b)+\phi(1+t))^2-y^2}\Big)\notag\\ &\quad +4\gamma^2\big((\phi(1+b)+\phi(1+t))^2-y^2\big)^{-\gamma}F\Big(\gamma+1,\gamma+1;2;\frac{(\phi(1+b)-\phi(1+t))^2-y^2}{(\phi(1+b)+\phi(1+t))^2-y^2}\Big)\notag\\ &\qquad\qquad \times    \frac{(1+b)^\ell \phi(1+t)\big(\phi(1+b)^2-\phi(1+t)^2+y^2\big)}{\big((\phi(1+b)+\phi(1+t))^2-y^2\big)^2},\label{dE/db}
\end{align} where in the second term we used again the rule \eqref{derivative hypergeometric function} and
\begin{align*}
\frac{\partial}{\partial b}\bigg(\frac{(\phi(1+b)-\phi(1+t))^2-y^2}{(\phi(1+b)+\phi(1+t))^2-y^2}\bigg)&=  \frac{4 (1+b)^\ell \phi(1+t)\big(\phi(1+b)^2-\phi(1+t)^2+y^2\big)}{\big((\phi(1+b)+\phi(1+t))^2-y^2\big)^2}.
\end{align*}

In particular, since for $b=A^{-1}(A(t)-y)$ the relations $$(1+b)^\ell=\big((1+t)^{\ell+1}-(\ell+1)y\big)^{2\gamma}=(\ell+1)^{2\gamma}(\phi(1+t)-y)^{2\gamma}$$ and \eqref{phi(1+b)=phi(1+t)-y} are satisfied, we obtain the following relation for this value of $b$ in \eqref{dE/db}:
\begin{align*}
\frac{\partial E}{\partial b}&(b,y;t,0)\Big|_{b=A^{-1}(A(t)-y)}\notag
\\&= -2\gamma (\ell+1)^{2\gamma}(\phi(1+t)-y)^{2\gamma} (2\phi(1+t)-y) \big((2\phi(1+t)-y)^2-y^2\big)^{-\gamma-1}\notag
\\ &\quad + 4\gamma^2 \big((2\phi(1+t)-y)^2-y^2\big)^{-\gamma-2}   (\ell+1)^{2\gamma}(\phi(1+t)-y)^{2\gamma} \phi(1+t)\notag
\\ &\qquad \times\big((\phi(1+t)-y)^2 -\phi(1+t)^2 +y^2\big)\notag
\\&= -2^{-2\gamma-1}(\ell+1)^{2\gamma}(\phi(1+t)-y)^{\gamma-1}  \phi(1+t)^{-\gamma-1}\big(\gamma(2\phi(1+t)-y)+\gamma^2 y\big).
\end{align*} This is exactly \eqref{dE/db for b=A^-1(A(t)-y)} and the proof is completed.
\end{proof}
\subsubsection{Homogeneous case with vanishing first data}
\begin{thm}\label{thm repres formula 1d lin tric u_0=0}
The solution $u=u(t,x)$ of the Cauchy problem
\begin{align}\label{linear CP with u0=0}
\begin{cases}
u_{tt}-(1+t)^{2\ell} u_{xx}=0,& t>0,\,\, x\in \mathbb{R},\\
u(0,x)=0, & x\in \mathbb{R},\\
u_t(0,x)=u_1(x), & x\in \mathbb{R},
\end{cases}
\end{align}
with $u_1\in C^\infty_0 (\mathbb{R})$ can be represented as follows:
\begin{align*}
u(t,x)= \int_{0}^{A(t)} \big(u_1(x-y)+u_1(x+y)\big)K_1(t,y) \, dy \, ,
\end{align*} where the kernel $K_1=K_1(t,y)$ is defined by
\begin{align}
K_1(t,y)&=c_\ell E(t,y;0,0) \label{def K1(t,y) via E}\\&= c_\ell \big((\phi(1+t)+\phi(1))^2-y^2\big)^{-\gamma}F\Big(\gamma,\gamma;1; \frac{A(t)^2-y^2}{(\phi(1+t)+\phi(1))^2-y^2}\Big).\label{def K1(t,y)}
\end{align}
\end{thm}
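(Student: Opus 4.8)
The plan is to carry out the reduction announced above: first rewrite the homogeneous problem \eqref{linear CP with u0=0} as an inhomogeneous problem with vanishing data, to which the representation of Section \ref{section repres formula inhom probl} applies, and then simplify the resulting expression by means of the symmetry and translation properties of $E$ collected in the preceding lemma. Since $a(t)=(1+t)^\ell$ is smooth and strictly positive for $t>-1$, the operator $\mathcal T$ is strictly hyperbolic and \eqref{linear CP with u0=0} has a unique classical solution $u$, which by finite speed of propagation has compact support in $x$ for every $t$; it is this $u$ that we must identify with the right-hand side of the asserted formula.

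First I would extend $u$ by $0$ to $\{-1<t<0\}$, keep calling the extension $u$, and compute its distributional time derivatives. Since the extension is continuous with $u(0,\cdot)=0$, no $\delta$ appears in $\partial_t u$, whereas $\partial_t u$ jumps by $u_1$ across $t=0$, so that
\[
\partial_t^2 u-(1+t)^{2\ell}\partial_x^2 u=\delta_0(t)\otimes u_1(x)
\]
in the distributional sense on $(-1,\infty)\times\mathbb R$. Hence, for $t\ge 0$, $u$ is the (forward) solution of \eqref{Tric type CP 1d} with zero data and source $f=\delta_0\otimes u_1$. As $E_+(t,x;t_0,x_0)$ is by construction the forward fundamental solution of $\mathcal T$, the formula of Section \ref{section repres formula inhom probl} then yields $u(t,x)=\int_{\mathbb R}E_+(t,x;0,y)\,u_1(y)\,dy$. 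To make this rigorous I would approximate $\delta_0$ by nonnegative mollifiers $h_\varepsilon$ supported in $[0,\varepsilon]$, apply the Section \ref{section repres formula inhom probl} formula to the $\mathcal C^2$ sources $f_\varepsilon=h_\varepsilon\otimes u_1$, and pass to the limit $\varepsilon\to0$: on one side the corresponding solutions converge to $u$ by well-posedness and continuous dependence on the initial time; on the other, the explicit integrals converge by dominated convergence, since for $0\le b\le t$ and $0\le y\le\phi(1+t)-\phi(1+b)$ the kernel $E(t,y;b,0)$ is jointly continuous and bounded — the factor $\big((\phi(1+t)+\phi(1+b))^2-y^2\big)$ is bounded below by $4\phi(1+t)\phi(1)>0$, and the argument of the Gauss hypergeometric function stays bounded away from $1$.

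It then remains to rewrite the limit. Using that, by \eqref{definition E(t,x,t0,x0)}, $E(t,x;0,y)$ depends on $x$ only through $x-y$, i.e.\ $E(t,x;0,y)=E(t,x-y;0,0)$ (cf.\ \eqref{space invariance E} and \eqref{symmetry E t,b}), and that the support of $E_+(t,x;0,y)$ forces $|x-y|<A(t)$, one gets $u(t,x)=c_\ell\int_{|x-y|<A(t)}E(t,x-y;0,0)u_1(y)\,dy$. Substituting $z=x-y$ and exploiting that $z\mapsto E(t,z;0,0)$ is even leads to
\[
u(t,x)=c_\ell\int_0^{A(t)}E(t,z;0,0)\big(u_1(x-z)+u_1(x+z)\big)\,dz ,
\]
which is the claimed identity with $K_1(t,y)=c_\ell E(t,y;0,0)$; the explicit form \eqref{def K1(t,y)} follows by inserting $t_0=x_0=0$ and $A(t)=\phi(1+t)-\phi(1)$ into \eqref{definition E(t,x,t0,x0)}.

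The step I expect to be the main obstacle is the transfer of the Section \ref{section repres formula inhom probl} representation from $\mathcal C^2$ sources to the singular source $\delta_0(t)\otimes u_1(x)$, i.e.\ the uniform estimates on $E(t,y;b,0)$ near $b=0$ that justify the limit $\varepsilon\to0$. A fully self-contained alternative avoiding this point is to \emph{define} $u$ by the right-hand side of the claimed formula, with $K_1$ as in \eqref{def K1(t,y) via E}, and verify directly that it solves \eqref{linear CP with u0=0}: the condition $u(0,\cdot)=0$ is immediate since $A(0)=0$; the condition $u_t(0,\cdot)=u_1$ reduces, after differentiating under the integral sign, to the numerical identity $A'(0)K_1(0,0)=\tfrac12$, which one checks from $\phi(1)=\tfrac1{\ell+1}$ and the definitions of $c_\ell$ and $\gamma$; and the equation itself follows from the fact that $E(t,y;0,0)$ solves $\mathcal T E=0$ in the interior of the forward cone $D_+(0,0)=\{t>0,\,|y|<A(t)\}$, combined with the boundary identities \eqref{E in y=A(t)-A(b)} and \eqref{dE/dy in y=A(t)-A(b)} at $b=0$, which make the contributions of the moving endpoint $y=A(t)$ to $u_{tt}$ cancel. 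In this second route the delicate point is the bookkeeping of those boundary terms, after which uniqueness for the strictly hyperbolic Cauchy problem identifies the constructed function with the solution $u$.
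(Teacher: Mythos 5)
Your proposal is correct, but it takes a genuinely different route from the paper. The paper sets $w=u-tu_1$, so that $w$ solves the inhomogeneous problem with the smooth source $t(1+t)^{2\ell}u_1''(x)$, applies the formula of Section \ref{section repres formula inhom probl}, and then grinds through two integrations by parts in $y$, a further one in $b$, the boundary identities \eqref{E in y=A(t)-A(b)}, \eqref{dE/dy in y=A(t)-A(b)}, \eqref{E in y=A^-1(A(t)-y)}, \eqref{dE/db for b=A^-1(A(t)-y)} and the fundamental-solution property \eqref{E is a fund sol property}; the kernel $K_1=c_\ell E(t,\cdot\,;0,0)$ only appears after the cancellation of the auxiliary kernels $Q_1$ and $\widetilde{Q}_1$. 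You instead read the initial velocity as the singular source $\delta_0(t)\otimes u_1(x)$, so that $u$ is $E_+(\,\cdot\,;0,y)$ integrated against $u_1$, and you justify the passage from $\mathcal{C}^2$ sources to this one by mollifying in time; your kernel bounds are right (the prefactor base is indeed $\geq 4\phi(1+t)\phi(1)$ and the hypergeometric argument stays away from $1$ for fixed $t$), the translation invariance and evenness of $E$ then give the stated kernel immediately, and your normalization check $2K_1(0,0)=c_\ell\,2^{1-2\gamma}\phi(1)^{-2\gamma}=1$ is correct. What your route buys is brevity: it bypasses the whole $J_1,\dots,J_4$ bookkeeping. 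What it costs is reliance on facts external to the paper, namely Duhamel's principle, i.e.\ continuity of the homogeneous solution operator in the initial time, for this variable-coefficient strictly hyperbolic equation — this is genuinely needed, since a crude energy estimate on $u_\varepsilon-u$ does not tend to zero — whereas the paper's argument is self-contained given Section \ref{section repres formula inhom probl} and the preceding lemma, and its intermediate identities are reused almost verbatim in the proof of Theorem \ref{thm repres formula 1d lin tric u_1=0} (where your trick would instead produce the dipole source $\delta_0'(t)\otimes u_0(x)$, requiring the $b$-derivative of $E$, consistent with \eqref{definition K0(t,y)}). Your second, direct-verification alternative is also viable, but, as you anticipate, it essentially reproduces the paper's boundary-term computations in reverse.
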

\begin{proof}
If $u$ is the classical solution to \eqref{linear CP with u0=0}, then $w(t,x)=u(t,x)-tu_1(x)$ is the classical solution to
\begin{align*}
\begin{cases}
w_{tt}-(1+t)^{2\ell} w_{xx}=t(1+t)^{2\ell}u''_1(x),& t>0,\,\, x\in \mathbb{R},\\
w(0,x)=0, & x\in \mathbb{R},\\
w_t(0,x)=0, & x\in \mathbb{R}.
\end{cases}
\end{align*}
Therefore, since we have an explicit representation formula for solutions to the inhomogeneous Cauchy problem related to \eqref{linear CP with u0=0} with vanishing data  we can directly derive a representation formula for $u$.

Indeed, using the same notations of the previous section we have
\begin{align*}
u(t,x)&=t u_1(x)+w(t,x)= t u_1(x) +c_\ell \int_0^t b(1+b)^{2\ell} \int_{A(b)-A(t)}^{A(t)-A(b)}  u''_1(x-y) E(t,y;b,0) \,dy \, db.
\end{align*}
Using  $u''_1(x-y)=\frac{\partial^2 u_1}{\partial y^2}(x-y)$ and applying  twice the integration by parts with respect to the variable $y$ we obtain
\begin{align*}
 \int_{A(b)-A(t)}^{A(t)-A(b)} &\frac{\partial^2 u_1}{\partial y^2}(x-y)E(t,y;b,0) \,dy
 \\ &= \Big[-u'_1(x-y)E(t,y;b,0)\Big]_{y=A(b)-A(t)}^{y=A(t)-A(b)}-\Big[u_1(x-y)\frac{\partial E}{\partial y}(t,y;b,0)\Big]_{y=A(b)-A(t)}^{y=A(t)-A(b)}\\ & \quad +\int_{A(b)-A(t)}^{A(t)-A(b)} u_1(x-y)\frac{\partial^2 E}{\partial y^2}(t,y;b,0) \,dy .
\end{align*}

Since the function $E(t,y;b,0)$ is even with respect to $y$ and, consequently, $\frac{\partial E}{\partial y}(t,y;b,0)$ is odd with respect to $y$, we find

\begin{align}
 \int_{A(b)-A(t)}^{A(t)-A(b)} &\frac{\partial^2 u_1}{\partial y^2}(x-y)E(t,y;b,0) \,dy \notag \\ &= \big(u'_1(x+A(t)-A(b))-u'_1(x-A(t)+A(b))\big)\,E(t,A(t)-A(b);b,0) \notag\\ & \quad -\big(u_1(x+A(t)-A(b))+u_1(x-A(t)+A(b))\big)\,\frac{\partial E}{\partial y}(t,y;b,0)\Big|_{y=A(t)-A(b)} \notag\\ & \quad +\int_{A(b)-A(t)}^{A(t)-A(b)} u_1(x-y)\frac{\partial^2 E}{\partial y^2}(t,y;b,0) \,dy .\label{y integral 1}
\end{align}

 Using the expressions  \eqref{E in y=A(t)-A(b)}  and \eqref{dE/dy in y=A(t)-A(b)} in \eqref{y integral 1} we obtain
\begin{align}
 \int_{A(b)-A(t)}^{A(t)-A(b)} & \frac{\partial^2 u_1}{\partial y^2}(x-y)E(t,y;b,0) \,dy  \notag  \\ &= 2^{-2\gamma} (\ell+1)^{2\gamma}\,(1+t)^{-\frac{\ell}{2}}(1+b)^{-\frac{\ell}{2}}  \big(u'_1(x+A(t)-A(b))-u'_1(x-A(t)+A(b))\big) \notag\\ & \quad  - 2^{-2\gamma-3} \ell(\ell+2) (\ell+1)^{2\gamma}\,(1+t)^{-\frac{3}{2}\ell-1}(1+b)^{-\frac{3}{2}\ell-1} \big(\phi(1+t)-\phi(1+b)\big)\notag\\ &\qquad \qquad \times \big(u_1(x+A(t)-A(b))+u_1(x-A(t)+A(b))\big) \notag \\ &\quad +\int_{A(b)-A(t)}^{A(t)-A(b)} u_1(x-y)\frac{\partial^2 E}{\partial y^2}(t,y;b,0) \,dy.\label{y integral}
\end{align}
Let us go back to the representation formula for $u=u(t,x)$. Using \eqref{y integral} we can write $u(t,x)$ as a sum of five terms in the following way:
\begin{align*}
u(t,x)&=t u_1(x) +c_\ell \int_0^t b(1+b)^{2\ell} \int_{A(b)-A(t)}^{A(t)-A(b)}  \frac{\partial^2 u_1}{\partial y^2}(x-y) E(t,y;b,0) \,dy \, db \\
&=t u_1(x)+ J_1+J_2+J_3+J_4,
\end{align*} where
\begin{align*}
J_1&:= \tfrac{1}{2}  (1+t)^{-\frac{\ell}{2}}\int_0^t b(1+b)^{\frac{3\ell}{2}}  \big(u'_1(x+A(t)-A(b))-u'_1(x-A(t)+A(b)) \big)\, db, \\
J_2&:= -\tfrac{\ell  (\ell+2)}{16(\ell+1)}  (1+t)^{-\frac{\ell}{2}}\int_0^t b(1+b)^{\frac{\ell}{2}-1}  \big(u_1(x+A(t)-A(b))+u_1(x-A(t)+A(b)) \big)\, db, \\
J_3&:= \tfrac{\ell  (\ell+2)}{16(\ell+1)} (1+t)^{-\frac{3\ell}{2}-1}\int_0^t b(1+b)^{\frac{3\ell}{2}}  \big(u_1(x+A(t)-A(b))+u_1(x-A(t)+A(b)) \big)\, db, \\
J_4&:= c_\ell \int_0^t b(1+b)^{2\ell}\int_{A(b)-A(t)}^{A(t)-A(b)} u_1(x-y)\frac{\partial^2 E}{\partial y^2}(t,y;b,0) \,dy \, db.
\end{align*}
Let us remark  that
\begin{equation}
\begin{split}
u'_1(x+A(t)-A(b))&=-(1+b)^{-\ell}\frac{\partial u_1}{\partial b}(x+A(t)-A(b)), \\
u'_1(x-A(t)+A(b))&=(1+b)^{-\ell}\frac{\partial u_1}{\partial b}(x-A(t)+A(b)).\label{relation u'_1 du_1/db}
\end{split}
\end{equation}

Therefore, we can rewrite $J_1$ as follows:
\begin{align*}
J_1&= -\tfrac{1}{2}  (1+t)^{-\frac{\ell}{2}}\int_0^t b(1+b)^{\frac{\ell}{2}}  \Big(\frac{\partial u_1}{\partial b}(x+A(t)-A(b))+\frac{\partial u_1}{\partial b}(x-A(t)+A(b)) \Big)\, db \\
 & =-t u_1(x)+\tfrac{1}{2}  (1+t)^{-\frac{\ell}{2}}\int_0^t (1+b)^{\frac{\ell}{2}}  \big(u_1(x+A(t)-A(b))+u_1(x-A(t)+A(b)) \big)\, db \\ & \quad+\tfrac{\ell}{4}  (1+t)^{-\frac{\ell}{2}}\int_0^t b(1+b)^{\frac{\ell}{2}-1}  \big(u_1(x+A(t)-A(b))+u_1(x-A(t)+A(b)) \big) db.
\end{align*}

 In particular, we see that thanks to this representation there is a cancelation of the term $t u_1(x)$ in the expression for $u(t,x)$ and that the second integral on the right-hand side is proportional to $J_2$.

Let us define the kernel
\begin{align*}
Q_1=Q_1(t,b)&=\tfrac{1}{2}(1+t)^{-\frac{\ell}{2}}(1+b)^{\frac{\ell}{2}} + \tfrac{\ell(3\ell+2)}{16(\ell+1)}  (1+t)^{-\frac{\ell}{2}} b(1+b)^{\frac{\ell}{2}-1} +\tfrac{\ell  (\ell+2)}{16(\ell+1)}  (1+t)^{-\frac{3\ell}{2}-1} b(1+b)^{\frac{3\ell}{2}}.
\end{align*}

Then, we can write $u(t,x)$ as
\begin{align}
u(t,x)&= \int_0^t Q_1(t,b) \big(u_1(x+A(t)-A(b))+u_1(x-A(t)+A(b)) \big)\, db +J_4.\label{J_1+...+J4}
\end{align}
Now we want to write the integral $J_4$ in a more suitable way. Firstly, we observe that $\frac{\partial^2 E}{\partial y^2}(t,y;b,0)$ is an even function in $y$, and then, by using Fubini's theorem, we arrive at
\begin{align*}
J_4&= c_\ell \int_0^t b(1+b)^{2\ell}\int_{0}^{A(t)-A(b)} \big(u_1(x-y)+u_1(x+y)\big)\frac{\partial^2 E}{\partial y^2}(t,y;b,0) \,dy \, db  \\ &= c_\ell \int_{0}^{A(t)} \big(u_1(x-y)+u_1(x+y)\big)\int_0^{A^{-1}(A(t)-y)} b(1+b)^{2\ell} \frac{\partial^2 E}{\partial y^2}(t,y;b,0) \,db \, dy.
\end{align*}


 On the other hand, $E(t,y;b,0)$ is symmetric with respect to the variables $t$ and $b$ (see \eqref{symmetry E t,b}).

Thus, using the fact that $E$ is a fundamental solution for the operator $\mathcal{T}$, we have in $D_+(t,0)$ the identity
\begin{align}
\frac{\partial^2 E}{\partial b^2}(b,y;t,0)=(1+b)^{2\ell} \frac{\partial^2 E}{\partial y^2}(b,y;t,0).\label{E is a fund sol property}
\end{align}

Consequently, a further integration by parts and the fundamental theorem of calculus provide
\begin{align}
J_4&= c_\ell \int_{0}^{A(t)}\big(u_1(x-y)+u_1(x+y)\big)\int_0^{A^{-1}(A(t)-y)}   b\, \frac{\partial^2 E}{\partial b^2}(b,y;t,0) \,db \, dy \notag\\
&=  c_\ell \int_{0}^{A(t)}\big(u_1(x-y)+u_1(x+y)\big)  \Big( b\, \frac{\partial E}{\partial b}(b,y;t,0) \Big)\Big|_{b=A^{-1}(A(t)-y)} \, dy \notag \\ &\quad +c_\ell \int_{0}^{A(t)}\big(u_1(x-y)+u_1(x+y)\big)\Big( E(t,y;0,0)-E(A^{-1}(A(t)-y),y;t,0) \Big) \, dy.\label{J_4}
\end{align}

Using the explicit expression of $E(b,y;t,0) $ and $\frac{\partial E}{\partial b}(b,y;t,0) $ for $b=A^{-1}(A(t)-y)$ (see \eqref{E in y=A^-1(A(t)-y)} and \eqref{dE/db for b=A^-1(A(t)-y)}) in \eqref{J_4} we can write $J_4$ as follows:
\begin{align*}
J_4&=   \int_{0}^{A(t)}\big(u_1(x-y)+u_1(x+y)\big)
\Big(\widetilde{Q}_1(t,y)+c_\ell E(t,y;0,0) \Big)\, dy,
\end{align*} where
\begin{align*}
\widetilde{Q}_1(t,y)&=-\tfrac{1}{4}\big(((1+t)^{\ell+1}-(\ell+1)y)^{\frac{1}{\ell+1}}-1\big)(\phi(1+t)-y)^{\gamma-1}  \phi(1+t)^{-\gamma-1}\big\{\gamma(2\phi(1+t)-y)+\gamma^2 y\big\}\\
&\quad - \tfrac{1}{2}(\ell+1)^{-\gamma} (1+t)^{-\frac{\ell}{2}}(\phi(1+t)-y)^{-\gamma}.
\end{align*}

Let us perform the change of variable $y=A(t)-A(b)$ for the first addend  in the last representation of $J_4$ we may conclude
\begin{align*}
J_4&=   \int_{0}^{t}\big(u_1(x-A(t)+A(b))+u_1(x+A(t)-A(b))\big)
\widetilde{Q}_1(t,A(t)-A(b))(1+b)^\ell db\\ &\quad +c_\ell \int_{0}^{A(t)}\big(u_1(x-y)+u_1(x+y)\big) E(t,y;0,0) \, dy.
\end{align*}

Since for $y=A(t)-A(b)$ the relations
\begin{align*}
b&=A^{-1}(A(t)-y)=((1+t)^{\ell+1}-(\ell+1)y)^{\frac{1}{\ell+1}}-1,
\end{align*} and \eqref{phi(1+b)=phi(1+t)-y} are fulfilled, then, we can write $\widetilde{Q}_1(t,A(t)-A(b))$ as follows:
\begin{align*}
\widetilde{Q}_1(t,A(t)-A(b))&=-\tfrac{1}{4}b(\phi(1+b))^{\gamma-1}  \phi(1+t)^{-\gamma-1}\big\{\gamma(\phi(1+t)+\phi(1+b))+\gamma^2 (\phi(1+t)-\phi(1+b))\big\}\\
&\quad - \tfrac{1}{2}(\ell+1)^{-\gamma} (1+t)^{-\frac{\ell}{2}}(\phi(1+b))^{-\gamma}\\
&= -\tfrac{1}{4} (\ell+1) (\gamma+\gamma^2) b(1+b)^{-\frac{\ell}{2}-1}(1+t)^{-\frac{\ell}{2}} -\tfrac{1}{4} (\ell+1) (\gamma-\gamma^2) b(1+b)^{\frac{\ell}{2}}(1+t)^{-\frac{3\ell}{2} -1}\\ & \quad -\tfrac{1}{2}(1+b)^{-\frac{\ell}{2}}(1+t)^{-\frac{\ell}{2}}=-Q_1(t,b)(1+b)^{-\ell}.
\end{align*}
Consequently, in \eqref{J_1+...+J4} there are cancelations between several terms and at the end, we obtain
\begin{align*}
u(t,x)=\int_{0}^{A(t)} \big(u_1(x-y)+u_1(x+y)\big)K_1(t,y) \, dy \, ,
\end{align*} where the kernel $K_1=K_1(t,y)$ is defined by \eqref{def K1(t,y)}.
\end{proof}
\begin{rmk}
In the last integral the kernel $K_1=K_1(t,y)$ is positive for $t\in [-A(t),A(t)]$.
\end{rmk}

%
%
%

\subsubsection{Homogeneous case with vanishing second data} \label{sssection u0}

\begin{thm}\label{thm repres formula 1d lin tric u_1=0}
The solution $u=u(t,x)$ of the Cauchy problem
\begin{align}\label{linear CP with u1=0}
\begin{cases}
u_{tt}-(1+t)^{2\ell} u_{xx}=0,& t>0,\,\, x\in \mathbb{R},\\
u(0,x)=u_0(x), & x\in \mathbb{R},\\
u_t(0,x)=0, & x\in \mathbb{R},
\end{cases}
\end{align}
with $u_0\in C^\infty_0 (\mathbb{R})$ can be represented as follows:
\begin{align*}
u(t,x)= \tfrac{1}{2}  (1+t)^{-\frac{\ell}{2}}\big(u_0(x+A(t))+u_0(x-A(t))\big)+\int_{0}^{A(t)} \big(u_0(x-y)+u_0(x+y)\big)K_0(t,y) \, dy,
\end{align*} where the kernel $K_0=K_0(t,y)$ is defined by
\begin{align}\label{definition K0(t,y)}
K_0(t,y)=-c_\ell \frac{\partial E}{\partial b}(t,y;b,0)\Big|_{b=0}.
\end{align}
\end{thm}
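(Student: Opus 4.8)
The plan is to follow verbatim the scheme of the proof of Theorem~\ref{thm repres formula 1d lin tric u_0=0}, the only structural change being that the homogeneous problem \eqref{linear CP with u1=0} is now reduced to an inhomogeneous one by subtracting the datum itself rather than $t$ times the datum. Concretely, I would set $w(t,x):=u(t,x)-u_0(x)$ and check that $w$ is the classical solution of
\begin{align*}
\begin{cases}
w_{tt}-(1+t)^{2\ell}w_{xx}=(1+t)^{2\ell}u_0''(x), & t>0,\ x\in\mathbb{R},\\
w(0,x)=0,\quad w_t(0,x)=0, & x\in\mathbb{R}.
\end{cases}
\end{align*}
Applying the representation formula from Section~\ref{section repres formula inhom probl} (recall $\phi(1+t)-\phi(1+b)=A(t)-A(b)$) and writing $g(y):=u_0(x-y)+u_0(x+y)$, so that $g''(y)=u_0''(x-y)+u_0''(x+y)$, this gives
\begin{align*}
u(t,x)=u_0(x)+c_\ell\int_0^t(1+b)^{2\ell}\int_0^{A(t)-A(b)} g''(y)\,E(t,y;b,0)\,dy\,db.
\end{align*}

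Next I would integrate by parts twice in $y$ in the inner integral. Since $g'(0)=0$ and $E(t,\cdot\,;b,0)$ is even in $y$ (hence $\partial_y E(t,0;b,0)=0$), the boundary contributions at $y=0$ drop out and only the endpoint $y=A(t)-A(b)$ survives; there I substitute the explicit values \eqref{E in y=A(t)-A(b)} and \eqref{dE/dy in y=A(t)-A(b)}. This expresses $u(t,x)$ as $u_0(x)$ plus three pieces: a ``$g'$-boundary'' integral over $b$, a ``$g$-boundary'' integral over $b$, and a volume integral $c_\ell\int_0^t(1+b)^{2\ell}\int_0^{A(t)-A(b)}g(y)\,\partial_y^2E(t,y;b,0)\,dy\,db$. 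In the $g'$-term I would use the chain-rule identities analogous to \eqref{relation u'_1 du_1/db}, namely $u_0'(x+A(t)-A(b))=-(1+b)^{-\ell}\partial_b u_0(x+A(t)-A(b))$ and its symmetric counterpart, to turn it into a $b$-derivative of $h(b):=u_0(x+A(t)-A(b))+u_0(x-A(t)+A(b))$, and then integrate by parts in $b$. Since $h(t)=2u_0(x)$ and $h(0)=u_0(x+A(t))+u_0(x-A(t))$, the endpoint $b=t$ exactly cancels the free term $u_0(x)$, while the endpoint $b=0$ produces the claimed first summand $\tfrac12(1+t)^{-\ell/2}\bigl(u_0(x+A(t))+u_0(x-A(t))\bigr)$, using the identity $2^{-2\gamma}(\ell+1)^{2\gamma}c_\ell=\tfrac12$.

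For the volume integral I would proceed exactly as with $J_4$ in Theorem~\ref{thm repres formula 1d lin tric u_0=0}: apply Fubini to exchange the order of integration, rewrite $\partial_y^2 E(t,y;b,0)=\partial_y^2 E(b,y;t,0)$ via \eqref{symmetry E t,b}, and use that $E$ is a fundamental solution to replace $(1+b)^{2\ell}\partial_y^2 E(b,y;t,0)$ by $\partial_b^2 E(b,y;t,0)$. A single integration by parts in $b$ then yields the boundary value at $b=0$, which by \eqref{definition K0(t,y)} (together with the symmetry of $E$) equals $K_0(t,y)$, hence the desired term $\int_0^{A(t)}g(y)K_0(t,y)\,dy$, plus a boundary term at $b=A^{-1}(A(t)-y)$. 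Finally, collecting this last boundary term together with the two remaining $h(b)$-integrals from the previous step, performing the change of variable $y=A(t)-A(b)$ in the former and using $\phi(1+t)-y=\phi(1+b)$ along with the explicit evaluations \eqref{E in y=A^-1(A(t)-y)} and \eqref{dE/db for b=A^-1(A(t)-y)}, one checks — just as $\widetilde Q_1(t,A(t)-A(b))=-Q_1(t,b)(1+b)^{-\ell}$ in the previous proof — that all these residual terms cancel, leaving precisely the asserted formula.

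The conceptual content is thus entirely inherited from Theorem~\ref{thm repres formula 1d lin tric u_0=0}; the main obstacle is the bookkeeping in the last two steps — tracking which combinations of the numerous boundary terms annihilate one another, and carrying out the $(1+b)$-power and $\gamma$-constant arithmetic (in particular $2^{-2\gamma}(\ell+1)^{2\gamma}c_\ell=\tfrac12$). Since replacing $t u_1$ by $u_0$ merely removes one factor of $b$ from the inner $b$-integrals and changes the source from $t(1+t)^{2\ell}u_1''$ to $(1+t)^{2\ell}u_0''$, no new phenomena appear; as a consistency check, the limit case $\ell=0$ degenerates to d'Alembert's formula, since then $E\equiv1$ and hence $K_0\equiv0$.
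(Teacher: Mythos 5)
Your proposal is correct and follows essentially the same route as the paper's own proof: subtract $u_0$, apply the inhomogeneous representation, integrate by parts twice in $y$, convert the $u_0'$-boundary term into a $b$-derivative and integrate in $b$ (cancelling $u_0(x)$ and producing the leading term via $2^{-2\gamma}(\ell+1)^{2\gamma}c_\ell=\tfrac12$), then treat the volume term with \eqref{symmetry E t,b}, \eqref{E is a fund sol property} and the final cancellation $\widetilde{Q}_0(t,A(t)-A(b))=-Q_0(t,b)(1+b)^{-\ell}$. The only cosmetic slip is that, unlike the $u_1$-case, the inner $b$-integral carries no factor $b$, so the volume term needs only the fundamental theorem of calculus (not an integration by parts) and the evaluation \eqref{E in y=A^-1(A(t)-y)} you cite is not actually used -- exactly as in the paper, only \eqref{dE/db for b=A^-1(A(t)-y)} and the value of $\partial E/\partial b$ at $b=0$ enter.
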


\begin{rmk}
Let us remark that the relations between the kernels $E$, $K_0$ and $K_1$ given by \eqref{def K1(t,y) via E} and \eqref{definition K0(t,y)} are formally identical (up to a multiplicative constant) to those in \cite[page 302]{YagGal09}.
\end{rmk}

\begin{proof}
If $u$ is the classical solution to \eqref{linear CP with u1=0}, then $w(t,x)=u(t,x)-u_0(x)$ is the classical solution to
\begin{align*}
\begin{cases}
w_{tt}-(1+t)^{2\ell} w_{xx}=(1+t)^{2\ell}u''_0(x),& t>0,\,\, x\in \mathbb{R},\\
w(0,x)=0, & x\in \mathbb{R},\\
w_t(0,x)=0, & x\in \mathbb{R}.
\end{cases}
\end{align*}  From this point we can proceed analogously as in the previous case.

Indeed, we have an explicit representation formula for solutions to the inhomogeneous Cauchy problem related to \eqref{linear CP with u1=0} with vanishing data, which provides a representation formula for $u(t,x)=w(t,x)+u_0(x)$.

Hence, using the notations of the previous sections we have
\begin{align*}
u(t,x)&= u_0(x)+w(t,x)=  u_0(x) +c_\ell \int_0^t (1+b)^{2\ell} \int_{A(b)-A(t)}^{A(t)-A(b)}  u''_0(x-y) E(t,y;b,0) \,dy \, db.
\end{align*}
Let us remark that the integral with respect to $y$ in the previous relation can be estimated exactly as we did in the case $u_0=0$. Thus, we arrive at
\begin{align}
& \int_{A(b)-A(t)}^{A(t)-A(b)} \frac{\partial^2 u_0}{\partial y^2}(x-y)E(t,y;b,0) \,dy \notag  \\ & \quad =  2^{-2\gamma} (\ell+1)^{2\gamma}\,(1+t)^{-\frac{\ell}{2}}(1+b)^{-\frac{\ell}{2}}  \big(u'_0(x+A(t)-A(b))-u'_0(x-A(t)+A(b))\big) \notag\\ & \qquad - 2^{-2\gamma-3} \ell(\ell+2) (\ell+1)^{2\gamma}\,(1+t)^{-\frac{3}{2}\ell-1}(1+b)^{-\frac{3}{2}\ell-1} \big(\phi(1+t)-\phi(1+b)\big)\notag\\ &\qquad \qquad \qquad \times \big(u_0(x+A(t)-A(b))+u_0(x-A(t)+A(b))\big) \notag \\ &  \qquad + \int_{A(b)-A(t)}^{A(t)-A(b)} u_0(x-y)\frac{\partial^2 E}{\partial y^2}(t,y;b,0) \,dy .\label{y integral, u0}
\end{align}
Using \eqref{y integral, u0} we can write $u(t,x)$ as a sum of five terms in the following way:
\begin{align*}
u(t,x)&= u_0(x) +c_\ell \int_0^t (1+b)^{2\ell} \int_{A(b)-A(t)}^{A(t)-A(b)}  \frac{\partial^2 u_0}{\partial y^2}(x-y) E(t,y;b,0) \,dy \, db \\
&= u_0(x)+ I_1+I_2+I_3+I_4,
\end{align*} where
\begin{align*}
I_1&:= \tfrac{1}{2}  (1+t)^{-\frac{\ell}{2}}\int_0^t (1+b)^{\frac{3\ell}{2}}  \big(u'_0(x+A(t)-A(b))-u'_0(x-A(t)+A(b)) \big)\, db, \\
I_2&:= -\tfrac{\ell  (\ell+2)}{16(\ell+1)}  (1+t)^{-\frac{\ell}{2}}\int_0^t (1+b)^{\frac{\ell}{2}-1}  \big(u_0(x+A(t)-A(b))+u_0(x-A(t)+A(b)) \big)\, db, \\
I_3&:= \tfrac{\ell  (\ell+2)}{16(\ell+1)} (1+t)^{-\frac{3\ell}{2}-1}\int_0^t (1+b)^{\frac{3\ell}{2}}  \big(u_0(x+A(t)-A(b))+u_0(x-A(t)+A(b)) \big)\, db, \\
I_4&:= c_\ell \int_0^t (1+b)^{2\ell}\int_{A(b)-A(t)}^{A(t)-A(b)} u_0(x-y)\frac{\partial^2 E}{\partial y^2}(t,y;b,0) \,dy \, db.
\end{align*}

Using analogous relations as in \eqref{relation u'_1 du_1/db} for $u_0$, we can rewrite $I_1$ as follows:
\begin{align*}
I_1&= -\tfrac{1}{2}  (1+t)^{-\frac{\ell}{2}}\int_0^t (1+b)^{\frac{\ell}{2}}  \Big(\frac{\partial u_0}{\partial b}(x+A(t)-A(b))+\frac{\partial u_0}{\partial b}(x-A(t)+A(b)) \Big)\, db \\
 & =- u_0(x)+\tfrac{1}{2}  (1+t)^{-\frac{\ell}{2}}\big(u_0(x+A(t))+u_0(x-A(t))\big)\\ & \quad+\tfrac{\ell}{4}  (1+t)^{-\frac{\ell}{2}}\int_0^t (1+b)^{\frac{\ell}{2}-1}  \big(u_0(x+A(t)-A(b))+u_0(x-A(t)+A(b)) \big)\, db.
\end{align*} In particular, we see that thanks to this representation there is a cancelation of the term $ u_0(x)$ in the expression for $u(t,x)$ and that the last integral is proportional to $I_2$.

Let us define the kernel
\begin{align*}
Q_0=Q_0(t,b)&= \tfrac{\ell(3\ell+2)}{16(\ell+1)}  (1+t)^{-\frac{\ell}{2}} (1+b)^{\frac{\ell}{2}-1} +\tfrac{\ell  (\ell+2)}{16(\ell+1)}  (1+t)^{-\frac{3\ell}{2}-1} (1+b)^{\frac{3\ell}{2}}.
\end{align*}

Then, we may rewrite $u(t,x)$ as follows:
\begin{align}
u(t,x)&= \tfrac{1}{2}  (1+t)^{-\frac{\ell}{2}}\big(u_0(x+A(t))+u_0(x-A(t))\big)\notag\\ &\quad +\int_0^t Q_0(t,b) \big(u_0(x+A(t)-A(b))+u_0(x-A(t)+A(b)) \big)\, db +I_4. \label{K_1+...+K4}
\end{align}

As we did for the term $J_4$ in the previous case, now we are going to rewrite $I_4$ in a suitable way, in order to have a cancelation of several terms between $I_4$ and the last integral with kernel $Q_0(t,b)$ in \eqref{K_1+...+K4}.

 Since $\frac{\partial^2 E}{\partial y^2}(t,y;b,0)$ is an even function in $y$, because of Fubini's theorem, we arrive at
\begin{align*}
I_4&= c_\ell \int_0^t (1+b)^{2\ell}\int_{0}^{A(t)-A(b)} \big(u_0(x-y)+u_0(x+y)\big)\frac{\partial^2 E}{\partial y^2}(t,y;b,0) \,dy \, db  \\ &= c_\ell \int_{0}^{A(t)} \big(u_0(x-y)+u_0(x+y)\big)\int_0^{A^{-1}(A(t)-y)} (1+b)^{2\ell} \frac{\partial^2 E}{\partial y^2}(t,y;b,0) \,db \, dy  \,.
\end{align*}

By using \eqref{E is a fund sol property} and the fundamental theorem of calculus we get
\begin{align}
I_4 
&=  c_\ell \int_{0}^{A(t)}\big(u_0(x-y)+u_0(x+y)\big) \Big[  \frac{\partial E}{\partial b}(b,y;t,0) \Big]_{b=0}^{b=A^{-1}(A(t)-y)}\ dy.\label{K_4}
\end{align}

Substituting \eqref{dE/db for b=A^-1(A(t)-y)}  in \eqref{K_4}, we can write $I_4$ as follows:
\begin{align*}
I_4&=   \int_{0}^{A(t)}\big(u_0(x-y)+u_0(x+y)\big)
\Big(\widetilde{Q}_0(t,y)-c_\ell \frac{\partial E}{\partial b}(t,y;b,0)\Big|_{b=0} \Big)\, dy,
\end{align*} where
\begin{align*}
\widetilde{Q}_0(t,y)&=-\tfrac{1}{4}(\phi(1+t)-y)^{\gamma-1}  \phi(1+t)^{-\gamma-1}\big[\gamma(2\phi(1+t)-y)+\gamma^2 y\big].
\end{align*}

The change of variables $y=A(t)-A(b)$ in the first addend in the previous representation for $I_4$ implies
\begin{align*}
I_4&=   \int_{0}^{t}\big(u_0(x-A(t)+A(b))+u_0(x+A(t)-A(b))\big)
\widetilde{Q}_0(t,A(t)-A(b))(1+b)^\ell db\\ &\quad -c_\ell \int_{0}^{A(t)}\big(u_0(x-y)+u_0(x+y)\big) \frac{\partial E}{\partial b}(t,y;b,0)\Big|_{b=0} \, dy \, .
\end{align*}

Since for $y=A(t)-A(b)$ the condition \eqref{phi(1+b)=phi(1+t)-y} is satisfied, then we can express $\widetilde{Q}_0(t,A(t)-A(b))$ as follows:
\begin{align*}
\widetilde{Q}_0(t,A(t)-A(b))&=-\tfrac{1}{4}(\phi(1+b))^{\gamma-1}  \phi(1+t)^{-\gamma-1}\big[\gamma(\phi(1+t)+\phi(1+b))+\gamma^2 (\phi(1+t)-\phi(1+b))\big]\\
&= -\tfrac{1}{4} (\ell+1) (\gamma+\gamma^2) (1+b)^{-\frac{\ell}{2}-1}(1+t)^{-\frac{\ell}{2}} -\tfrac{1}{4} (\ell+1) (\gamma-\gamma^2) (1+b)^{\frac{\ell}{2}}(1+t)^{-\frac{3\ell}{2} -1}\\ & =-Q_0(t,b)(1+b)^{-\ell}.
\end{align*}

Consequently, from \eqref{K_1+...+K4} it follows
\begin{align*}
u(t,x)&=\tfrac{1}{2}  (1+t)^{-\frac{\ell}{2}}\big(u_0(x+A(t))+u_0(x-A(t))\big) +\int_{0}^{A(t)} \big(u_0(x-y)+u_0(x+y)\big)K_0(t,y) \, dy \, ,
\end{align*} where the kernel $K_0=K_0(t,y)$ is defined by \eqref{definition K0(t,y)}.
\end{proof}
\begin{rmk}
The kernel $K_0=K_0(t,y)$ is nonnegative on the domain of integration.
Indeed, the Gauss hypergeometric functions $F(\gamma,\gamma;1;z)$ and $F(\gamma+1,\gamma+1;2;z)$ are nonnegative for $b\in [0,t]$ and $y\in [A(b)-A(t),A(t)-A(b)]$ and for $z=\frac{(\phi(1+b)-\phi(1+t))^2-y^2}{(\phi(1+b)+\phi(1+t))^2-y^2}$ .

 Furthermore, for $b\in [0,t]$ and $y\in [A(b)-A(t),A(t)-A(b)]$
\begin{align*}
y^2+\phi(1+b)^2-\phi(1+t)^2&\leq (\phi(1+t)-\phi(1+b))^2+\phi(1+b)^2-\phi(1+t)^2 \\ &=2\phi(1+b)(\phi(1+b)-\phi(1+t))\leq 0.
\end{align*}
Hence, from \eqref{dE/db} it follows that $\frac{\partial E}{\partial b}(t,y;b,0)\leq 0$ for $b\in [0,t]$ and $y\in [A(b)-A(t),A(t)-A(b)]$.
In particular, we find for $b=0$ that $K_0(t,y)\geq 0$ for $y\in [-A(t),A(t)]$.
\end{rmk}
\subsection{Representation formula}\label{Section Summary repres formula}
Summarizing, with the same notations as before,  we derived in the previous sections the representation formula
\begin{align}
u(t,x)&=\tfrac{1}{2}  (1+t)^{-\frac{\ell}{2}}\big(u_0(x+A(t))+u_0(x-A(t))\big)+\int_{0}^{A(t)} \big(u_0(x-y)+u_0(x+y)\big)K_0(t,y) \, dy \notag\\ & \quad + \int_{0}^{A(t)} \big(u_1(x-y)+u_1(x+y)\big)K_1(t,y) \, dy \notag\\ & \quad + c_\ell \int_0^t  \int_{0}^{\phi(1+t)-\phi(1+b)}   [ f(b,x-y)+f(b,x+y) ] \, E(t,y;b,0)\, dy\, db \label{representation formula tric 1d}
\end{align}
for classical solutions (even for weak solutions if all integrals are defined) of the Cauchy problem \eqref{Tric type CP 1d}.

In particular from the previous representation formula it is clear that the \emph{domain of dependence} for the solution $u$ in the point $(t_0,x_0)$, denoted by $\Omega(t_0,x_0)$, is the intersection of the backward characteristic cone with the upper half-plane, that is,
\begin{align*}
\Omega(t_0,x_0)=\big\{(t,x)\in \mathbb{R}^2: 0\leq t < t_0, |x-x_0| < \phi(1+t_0)-\phi(1+t)\big\}.
\end{align*}
\begin{rmk} In the case $\ell=0$, \eqref{representation formula tric 1d} coincides with the well-known D'Alembert's formula for the free wave equation in 1d.
\end{rmk}
\begin{rmk}\label{rmk nonnegativity hom problem} Because of the nonnegativity of the kernels $K_0=K_0(t,y)$ and $K_1=K_1(t,y)$ for $t\in [-A(t),A(t)]$, after assuming additionally nonnegative initial data $u_0,u_1$ we may conclude that classical solutions (even weak solutions) of the linear homogeneous Cauchy problem \eqref{Tric type CP 1d lin hom}  are everywhere (almost everywhere) nonnegative. In particular, under the assumption of nonnegativity of the data $u_0,u_1$, we can estimate from below the solution of the corresponding inhomogeneous Cauchy problem \eqref{Tric type CP 1d} by the solution of the inhomogeneous Cauchy problem with the same source but with vanishing initial data.
\end{rmk}

\section{Critical case} \label{Section Blowup critical case}
\setcounter{equation}{0}

Let us prove the blow-up result for \eqref{CP semi Tric eq} in the critical case, that is, when we employ Kato's lemma for the case in which the exponent $a$ in \eqref{condition on  of F Kato} satisfies $a=\frac{q-2}{p-1}$ and $k_0=k_0(k_1) >0$ is sufficiently large. This condition corresponds to the requirements \eqref{condition on p critical p1} and \eqref{condition on p critical p0} in the statement of Theorem \ref{Main theo}.

\begin{proof}[Proof of Theorem \ref{Main theo} in the critical case]

In this section we complete the proof of Theorem \ref{Main theo} in the critical case $p=p_{\NE}(n;\ell,k)$.
Let us start with the first case $p=p_1(n;\ell,k)$, or in other terms, when
$$-\frac{k+2}{p-1}+(\ell+1)n=1.$$
From \eqref{condition on G Kato} it follows
\begin{align*}
\mathscr{G}(t)\gtrsim (R+t).
\end{align*}

Therefore, by \eqref{condition on G'' Kato} we get
\begin{align*}
\frac{d^2 \mathscr{G}}{dt^2}(t)\gtrsim (R+t)^{k-(\ell+1)n(p-1)}(\mathscr{G}(t))^p = (R+t)^{-p-1}(\mathscr{G}(t))^p \gtrsim (R+t)^{-1}.
\end{align*}

Now it is opportune to mention explicitly the multiplicative constant, so let us consider $C>0$ such that $$\frac{d^2 \mathscr{G}}{dt^2}(t)\geq C (R+t)^{-1}.$$
A first integration leads to
\begin{align*}
\frac{d \mathscr{G}}{dt}(t)\geq \frac{d \mathscr{G}}{dt}(0)+C\log\big(\tfrac{R+t}{R}\big).
\end{align*}
 Integrating a second time, we find for large $t$ the estimate
\begin{align*}
\mathscr{G}(t)&\geq \mathscr{G}(0)+\frac{d \mathscr{G}}{dt}(0)\,t+C \int_0^t \log\big(\tfrac{R+s}{R}\big)ds \\ &=\mathscr{G}(0)+\frac{d \mathscr{G}}{dt}(0)\,t+C (R+t)\log(R+t)-C((R+t)\log R +t) \gtrsim (R+t)\log\big(\tfrac{R+t}{R}\big).
\end{align*}
According to Remark \ref{remark log term} the solution of our semi-linear Cauchy problem blows up also for $p=p_1(n;\ell,k)=p_{\NE}(n;\ell,k)$.

Using the representation formula from Section \ref{Section repres formula Tric eq} we shall, finally, consider the critical case $p=p_0(n;\ell,k)$, that is, when it holds the equality $$k-\frac{\ell p}{2}-(n-1)\Big(\frac{p}{2}-1\Big)(\ell+1)+2=-\frac{k+2}{p-1}+(\ell+1)n$$ instead of \eqref{1st condition p blow up Tric} in Theorem \ref{Main theo}.

 In fact for this case, as we already explained in Remark \ref{remark log term}, in order to prove \eqref{condition on G Kato} for an arbitrarily large multiplicative constant, we shall follow the approach of \cite{Yor06} (in this paper blow-up is studied in the critical case for the classical wave equation in spatial dimension $n\geq 4$) and of \cite{HeWittYinA17} (the critical case is considered for the Tricomi equation in dimensions $n\geq 2$). In particular, the key idea to prove the case $n=2$ is taken from \cite{HeWittYinA17}.



  Without loss of generality we may assume that $u=u(t,r)$ is a radial function. Indeed, if $u$ is not radial, considering the spherical mean
 \begin{align*}
 \widetilde{u}(t,r)=\frac{1}{\omega_n} \int_{\mathbb{S}^{n-1}}u(t,r\omega) d\sigma_\omega,
 \end{align*} of $u$, where $\omega_n$ is the $(n-1)$-dimensional measure of the unit sphere $\mathbb{S}^{n-1}$, then we have that $\widetilde{u}$ satisfies
 \begin{align*}
 \widetilde{u}_{tt}-(1+t)^{2\ell}\Delta\widetilde{u}=(\ell+1)^2(1+t)^k\widetilde{|u|^p}\geq(\ell+1)^2(1+t)^k|\widetilde{u}|^p.
\end{align*}

Let us consider the Radon transform
\begin{align*}
\mathscr{R}[u](t,\rho)=\int_{x\cdot \xi= \rho} u(t,x)\,d\sigma_x=\int_{x\cdot\xi=0}u(t,\rho\xi +x)\,d\sigma_x
\end{align*} of $u$, where $\rho\in\mathbb{R}$, $\xi\in\mathbb{R}^n$ is a given unitary vector and $d\sigma_x$ is the Lebesgue measure on the corresponding hyperplanes.  Since $u=u(t,r)$ is a radial function, $\mathscr{R}[u]$ does not depend on $\xi$ and
\begin{align*}
\mathscr{R}[u](t,\rho)= \omega_{n-1} \int_{|\rho|}^\infty u(t,r)(r^2-\rho^2)^{\frac{n-3}{2}}rdr
\end{align*} (cf. \cite[Formula (2.10) at page 368]{Yor06}).

The function $\mathscr{R}[u]$ solves the equation
\begin{align*}
\partial_t^2 \mathscr{R}[u]-(1+t)^{2\ell}\partial^2_\rho \mathscr{R}[u]=(\ell+1)^2(1+t)^k\mathscr{R}[|u|^p](t,\rho),
\end{align*} since $\mathscr{R}$ is linear and $\mathscr{R}[\Delta u]=\partial^2_\rho \mathscr{R}[u]$.

Assuming nonnegative data $u_0,u_1$ the solution of the homogeneous linear Cauchy problem
\begin{align*}
\begin{cases} w_{tt}-(1+t)^{2\ell} w_{\rho\rho}=0, \\
w(0,\rho)=\mathscr{R}[u_0](\rho),\\
w_\rho(0,\rho)=\mathscr{R}[u_1](\rho),\end{cases}
\end{align*} is always nonnegative. Hence, the representation formula \eqref{representation formula tric 1d} and Remark \ref{rmk nonnegativity hom problem} yield
\begin{align*}
\mathscr{R}[u](t,\rho)\geq c_\ell (\ell+1)^2 \int_0^t  (1+b)^k\! \int_0^{A(t)-A(b)} \big(\mathscr{R}[|u|^p](b,\rho-\sigma)+\mathscr{R}[|u|^p](b,\rho+\sigma)\big)E(t,\sigma;b,0)\,d\sigma db.
\end{align*}

Furthermore, the kernel function $E(t,\sigma;b,0)$ is positive on the domain of integration. Hence, we can integrate over a smaller domain and we obtain still a lower bound for $\mathscr{R}[u](t,\rho)$. Performing a change of variables in the previous integral we obtain
\begin{align*}
\mathscr{R}[u](t,\rho)\geq c_\ell (\ell+1)^2 \int_0^t  (1+b)^k \int_{\rho-(A(t)-A(b))}^{\rho+A(t)-A(b)} \mathscr{R}[|u|^p](b,\sigma)E(t,\rho-\sigma;b,0)\,d\sigma db.
\end{align*}

Now we observe that $$\supp \mathscr{R}[|u|^p](b,\cdot)\subset [-R-A(b),R+A(b)],$$ where $R>0$ is chosen in such a way that $\supp u_0, \supp u_1 \subset B_R$.

Indeed, thanks the finite speed of propagation property we know that $\supp u(b,\cdot)\subset B_{R+A(b)}$.

Therefore, if $|\sigma|> R+A(b)$ we have for any vector $x$ which is orthogonal to the unit vector $\xi$ the relation
\begin{align*}
\mathscr{R}[|u|^p](b,\sigma)=\int_{x\cdot \xi =0} |u(b,\sigma \xi+x)|^p d\sigma_x =0,
\end{align*} being $|\sigma \xi +x|\geq |\sigma|\geq R+A(b)$.

Let $t\geq 0$ such that $A(t)>R+1$ and $\rho\in (0, A(t)-R-1)$ be fixed (the condition on $t$ guarantees the nonemptiness of the interval for $\rho$).

 Since for these fixed values of $t$ and $\rho$ it results $1<A(t)-R-\rho < A(t)$, due to the monotonicity of $A$ we can find   $b_0\in (0,t) $ such that $2A(b_0)=A(t)-R-\rho $. Then for $0\leq b\leq b_0$ we have
\begin{align} \label{A(b) upper bound}
A(b)\leq A(b_0)= \tfrac{1}{2}(A(t)-R-\rho).
\end{align} Hence, it holds
\begin{align*}
\rho + A(t) -A(b) \geq R+A(b) \qquad \mbox{and} \qquad \rho -( A(t) -A(b)) \leq -(R+A(b)).
\end{align*}

Therefore,
\begin{align*}
\mathscr{R}[u](t,\rho)&\geq c_\ell (\ell+1)^2 \int_0^{b_0}  (1+b)^k \int_{-(R+A(b))}^{R+A(b)} \mathscr{R}[|u|^p](b,\sigma)E(t,\rho-\sigma;b,0)\,d\sigma db \\ &=c_\ell (\ell+1)^2 \int_0^{b_0}  (1+b)^k \int_{-\infty}^{+\infty} \mathscr{R}[|u|^p](b,\sigma)E(t,\rho-\sigma;b,0)\,d\sigma db,
\end{align*} because of the support property for $\mathscr{R}[|u|^p](b,\cdot)$.

Let us estimate from below the kernel
\begin{align*}
E(t,\rho-\sigma;b,0)=\big((\phi(1+t)+\phi(1+b))^2-(\rho-\sigma)^2\big)^{-\gamma} F\Big(\gamma,\gamma;1; \frac{(\phi(1+t)-\phi(1+b))^2-(\rho-\sigma)^2}{(\phi(1+t)+\phi(1+b))^2-(\rho-\sigma)^2}\Big)
\end{align*} on the domain of integration $\{(b,\sigma)\in \mathbb{R}^2: 0\leq b\leq b_0, \, |\sigma |\leq R+A(b)\}$. Since the argument of the Gauss hypergeometric function is always an element of the interval $[0,1)$ and the parameters $(\gamma,\gamma;1)$ are all nonnegative we may estimate
\begin{align*}
E(t,\rho-\sigma;b,0)&\geq ((\phi(1+t)+\phi(1+b))^2-(\rho-\sigma)^2)^{-\gamma} F(\gamma,\gamma;1;0)
\\& \gtrsim \phi (1+t)^{-\gamma} (\phi(1+t)-\rho)^{-\gamma}\simeq (1+t)^{-\frac{\ell}{2}} (\phi(1+t)-\rho)^{-\gamma}.
\end{align*}
Here we used the inequalities
\begin{align*}
\phi(1+t)+\phi(1+b)-\rho+\sigma & 
\leq 2\,(\phi(1+t)-\rho),\\
\phi(1+t)+\phi(1+b)+\rho-\sigma & 
 \leq 2\,\phi(1+t),
\end{align*} which are consequences  of \eqref{A(b) upper bound} and of the fact that $|\sigma|\leq R+A(b)$.

So, we get

\begin{align*}
\mathscr{R}[u](t,\rho)\geq c_\ell (1+t)^{-\frac{\ell}{2}} (\phi(1+t)-\rho)^{-\gamma} \int_0^t  (\ell+1)^2  (1+b)^k   \int_{\rho-(A(t)-A(b))}^{\rho+A(t)-A(b)} \mathscr{R}[|u|^p](b,\sigma)\,d\sigma db.
\end{align*}

 Consequently, because of the property for the support of $\mathscr{R}[|u|^p](b,\cdot)$, by using \eqref{condition on G'' Kato} we have
\begin{align*}
\mathscr{R}[u](t,\rho)&\geq c_\ell  (1+t)^{-\frac{\ell}{2}} (\phi(1+t)-\rho)^{-\gamma} \int_0^{b_0} (\ell+1)^2 (1+b)^k  \int_{-\infty}^{+\infty} \mathscr{R}[|u|^p](b,\sigma)\,d\sigma db \\
&  = c_\ell  (1+t)^{-\frac{\ell}{2}} (\phi(1+t)-\rho)^{-\gamma} \int_0^{b_0}  (\ell+1)^2(1+b)^k  \int_{\mathbb{R}^n} |u(b,x)|^p dx db\\
 &  = c_\ell  (1+t)^{-\frac{\ell}{2}} (\phi(1+t)-\rho)^{-\gamma}\int_0^{b_0} \frac{d^2 \mathscr{G}}{db^2}(b) db\\
 & \gtrsim  (1+t)^{-\frac{\ell}{2}} (\phi(1+t)-\rho)^{-\gamma} \int_0^{b_0} (R+b)^{k-\frac{\ell}{2}p-(n-1)(\frac{p}{2}-1)(\ell+1)} db.
\end{align*}
We recall that in the present case we deal with we assume $p=p_0(n;\ell,k)>p_1(n;\ell,k)$. But this is equivalent to require that the exponent of $(R+b)$ in the last integral is greater than $-1$.
Thus
\begin{align}
\mathscr{R}[u](t,\rho)&
 \gtrsim (1+t)^{-\frac{\ell}{2}} (\phi(1+t)-\rho)^{-\gamma}(1+b_0)^{k+1-\frac{\ell}{2}p-(n-1)(\frac{p}{2}-1)(\ell+1)} \notag \\
&\gtrsim (1+t)^{-\frac{\ell}{2}} (\phi(1+t)-\rho)^{-\gamma}(A(t)-R-\rho)^{\frac{k+1}{\ell+1}-\gamma p-(n-1)(\frac{p}{2}-1)}.\label{lower bound for the radon transf of u}
\end{align}

Let us introduce the operator $T:f\in L^p(\mathbb{R})\to T(f) \in L^p(\mathbb{R})$, where $T(f)$  is defined by
\begin{align*}
T(f)(\tau )=\frac{1}{|A(t)-\tau +R|^{\frac{n-1}{2}}} \int^{A(t)+R}_\tau f(r)|r-\tau|^{\frac{n-3}{2}}dr  \qquad \mbox{for any} \,\, \tau \in\mathbb{R}.
\end{align*}

In the case $n\geq 3$ the operator $T$ is a bounded operator on $L^p(\mathbb{R}^n)$. By triangular inequality we get
\begin{align*}
|T(f)(\tau)|&\leq \frac{1}{|A(t)-\tau +R|}\Big| \int^{A(t)+R}_\tau |f(r)|dr\Big|\leq \frac{1}{|A(t)-\tau +R|}\Big| \int^{A(t)+R}_{2\tau-(A(t)+R)} |f(r)|dr\Big| \leq 2 M(f)(\tau),
\end{align*} where $M(f)$ denotes the maximal function of $f$ (see Section \ref{Section maximal function}).

Being $p\in (1,\infty)$ it follows for $n\geq 3$ that
\begin{align*}
\| T(f)\|_{L^p(\mathbb{R})}\leq 2\| M(f)\|_{L^p(\mathbb{R})}\lesssim \| f\|_{L^p(\mathbb{R})}.
\end{align*}
Let us investigate the case $n=2$. In order to prove that $T$ is a bounded operator on $L^p(\mathbb{R})$ also in this case,
we will first derive $L^\infty(\mathbb{R})-L^\infty(\mathbb{R})$ and $L^1(\mathbb{R})-L^{1,\infty}(\mathbb{R})$ estimates (here $L^{1,\infty}(\mathbb{R})$ denotes the weak $L^1(\mathbb{R})$ space, cf. Section \ref{Section weak Lp}). Hence, by using Marcinkiewicz interpolation theorem (cf. Proposition \ref{Marcinkiewicz inter thrm}) we will reach the desired $L^p(\mathbb{R})-L^p(\mathbb{R})$ estimate.
In the case $n=2$ the operator $T$ is
\begin{align*}
T(f)(\tau )=|A(t)-\tau +R|^{-\frac{1}{2}} \int^{A(t)+R}_\tau f(r)|r-\tau|^{-\frac{1}{2}}dr  \qquad \mbox{for any} \,\, \tau \in\mathbb{R} \,\, \mbox{ and any} \,\, f\in L^p(\mathbb{R}).
\end{align*}

Let us begin with the $L^\infty(\mathbb{R})-L^\infty(\mathbb{R})$ estimate. For $\tau \leq A(t)+R$ we have
\begin{align}
|T(f)(\tau )|
&\leq (A(t)-\tau +R)^{-\frac{1}{2}} \|f\|_{L^\infty(\mathbb{R})} \int^{A(t)+R}_\tau (r-\tau)^{-\frac{1}{2}}dr =2  \|f\|_{L^\infty(\mathbb{R})}. \label{L^inf-L^inf est}
\end{align} When $\tau \geq A(t)+R$, in an analogous way we get the same estimate as in \eqref{L^inf-L^inf est}. Summarizing, $$\| T(f)\|_{L^\infty(\mathbb{R})}\leq 2 \|f\|_{L^\infty(\mathbb{R})}.$$

We prove now the $L^1(\mathbb{R})-L^{1,\infty}(\mathbb{R})$ estimate. Let $f\in L^1(\mathbb{R})$.
 We can write $T(f)$ as  product of the following two functions:
\begin{align*}
g(\tau)&= |A(t)-\tau +R|^{-\frac{1}{2}} \,, \qquad
h(\tau)=\int^{A(t)+R}_\tau f(r)|r-\tau|^{-\frac{1}{2}}dr.
\end{align*}

Now we want to show that $g,h\in L^{2,\infty}(\mathbb{R})$. By definition
\begin{align*}
\| g\|^2_{L^{2,\infty}(\mathbb{R})}&=\sup_{\alpha >0} \alpha^2 \meas\big(\{\tau \in \mathbb{R}: |g(\tau)|>\alpha\}\big)=2. 
\end{align*}

On the other hand
\begin{align*}
|h(\tau)|\leq \int_\mathbb{R} |f(r)| \,|r-\tau|^{-\frac{1}{2}}dr = (|f|\ast |r|^{-\frac{1}{2}}) (\tau),
\end{align*} and then by Young's inequality for weak type spaces (cf. Proposition \ref{Young ineq weak spac}) we obtain
\begin{align*}
 \| h\|_{L^{2,\infty}(\mathbb{R})}&\leq \| |f|\ast |r|^{-\frac{1}{2}}\|_{L^{2,\infty}(\mathbb{R})} \lesssim \|f\|_{L^1(\mathbb{R})}\,\|\,  |r|^{-\frac{1}{2}}\|_{L^{2,\infty}(\mathbb{R})} = 2^{\frac{1}{2}}\|f\|_{L^1(\mathbb{R})}.
\end{align*}

Consequently, by H\"{o}lder's inequality for weak type spaces (see also Proposition \ref{Holder ineq weak Lp}), we obtain
\begin{align}
\|T(f)\|_{L^{1,\infty}(\mathbb{R})}\leq 2\, \| g\|_{L^{2,\infty}(\mathbb{R})} \| h\|_{L^{2,\infty}(\mathbb{R})} \lesssim \, \|f\|_{L^1(\mathbb{R})}. \label{L^1-L^1,inf est}
\end{align}

Combining \eqref{L^inf-L^inf est} and  \eqref{L^1-L^1,inf est} and using Marcinkiewicz interpolation theorem, we arrive at
\begin{align*}
\| T(f)\|_{L^{p}(\mathbb{R})}  \leq C_p \|f\|_{L^p(\mathbb{R})},
\end{align*} where $C_p$  depends on $p$.

Now we can use the boundedness of $T$ to estimate $\|T(f)\|_{L^p(\mathbb{R})}$ for
\begin{align*}
f(t,r)=\begin{cases} |u(t,r)|r^{\frac{n-1}{p}} & \mbox{if} \,\, r\geq 0 , \\ 0 & \mbox{elsewhere}.  \end{cases}
\end{align*}
So,
\begin{align*}
\int_0^{A(t)-R-1}|T(f)(t,\rho)|^p d\rho & 
\lesssim \int_\mathbb{R} |f(t,\rho)|^p d\rho =\int_0^\infty |u(t,\rho)|^p \rho^{n-1} d\rho = \frac{1}{\omega_n}\int_{\mathbb{R}^n}|u(t,x)|^p dx.
\end{align*} Hence,
\begin{align*}
\int_{\mathbb{R}^n}|u(t,x)|^p dx &\gtrsim \int_0^{A(t)-R-1}|T(f)(t,\rho)|^p d\rho\\ & = \int_0^{A(t)-R-1} (A(t)-\rho+R)^{-\frac{n-1}{2}\, p} \Big(\int_\rho ^{A(t)+R}|u(t,r)|r^{\frac{n-1}{p}}(r-\rho)^{\frac{n-3}{2}} dr\Big)^pd\rho .
\end{align*}

For $\rho \leq r \leq A(t)+R$ it holds
\begin{align*}
 r^\frac{n-1}{p}&\geq r^\frac{n-1}{2}\rho^{(n-1)(\frac{1}{p}-\frac{1}{2})} \qquad \qquad \qquad\ \mbox{if} \,\, \,\, p \in (1,2],  \\ r^\frac{n-1}{p}&\geq r^\frac{n-1}{2}(A(t)-R-1)^{(n-1)(\frac{1}{p}-\frac{1}{2})} \qquad \,\mbox{if} \,\,\,\, p \geq 2.
\end{align*}
 Let us proceed with the computations in the case $p\leq 2$, since the computations are more straightforward in the case $p\geq 2$.
We get
\begin{align*}
\int_{\mathbb{R}^n}|u(t,x)|^p dx \gtrsim \int_0^{A(t)-R-1} (A(t)-\rho+R)^{-\frac{n-1}{2}\, p} \Big(\int_\rho ^{A(t)+R}|u(t,r)|r^{\frac{n-1}{2}}(r-\rho)^{\frac{n-3}{2}} dr\Big)^p \rho^{(n-1)(1-\frac{p}{2})} d\rho.
\end{align*}

We recognize that
\begin{align*}
\mathscr{R}[|u|](t,\rho)&= \omega_{n-1} \int_{|\rho|}^{A(t)+R}|u(t,\rho)|(r^2-\rho^2)^{\frac{n-3}{2}}rdr =\omega_{n-1} \int_{|\rho|}^{A(t)+R}|u(t,\rho)|(r-\rho)^{\frac{n-3}{2}}(r+\rho)^{\frac{n-3}{2}}rdr \\ &\leq 2^{\frac{n-3}{2}} \omega_{n-1} \int_{|\rho|}^{A(t)+R}|u(t,\rho)|(r-\rho)^{\frac{n-3}{2}}r^{\frac{n-1}{2}}dr.
\end{align*}

Therefore, since $0\leq \mathscr{R}[u](t,\rho)\leq \mathscr{R}[|u|](t,\rho)$,  
we find
\begin{align*}
\int_{\mathbb{R}^n}|u(t,x)|^p dx &\gtrsim \int_0^{A(t)-R-1} (A(t)-\rho+R)^{-\frac{n-1}{2}\, p} (\mathscr{R}[u](t,\rho))^p \rho^{(n-1)(1-\frac{p}{2})} d\rho \\  & \gtrsim (1+t)^{-\frac{\ell}{2} p}\int_0^{A(t)-R-1} (A(t)-\rho+R)^{-\frac{n-1}{2}\, p} (A(t)-\rho+\phi(1))^{-\gamma p}\\ & \qquad \qquad \qquad \qquad \qquad \times (A(t)-R-\rho)^{\big(\frac{k+1}{\ell+1}-\gamma p -(n-1)(\frac{p}{2}-1)\big)p}\rho^{(n-1)(1-\frac{p}{2})} d\rho,
\end{align*} where in the last inequality we used \eqref{lower bound for the radon transf of u}.

Let us remark that if $\rho\in (0, A(t)-R-1)$, then for a suitably large constant $C_R$ we have
\begin{align*}
A(t)-\rho+R\leq C_R (A(t)-\rho-R)  \qquad \mbox{and} \qquad A(t)-\rho+\phi(1)\leq C_R (A(t)-\rho-R) .
\end{align*}

Consequently,
\begin{align*}
&\int_{\mathbb{R}^n}|u(t,x)|^p dx \gtrsim (1+t)^{-\frac{\ell}{2} p}\int_0^{A(t)-R-1} \frac{\rho^{(n-1)(1-\frac{p}{2})} }{(A(t)-\rho-R)^{\frac{n-1}{2}\, p+\gamma p-\big(\frac{k+1}{\ell+1}-\gamma p -(n-1)(\frac{p}{2}-1)\big)p}} d\rho.
\end{align*}
However, using the condition $p=p_0(n;\ell,k)$ we get that the exponent of the function at the denominator is exactly $1$.

Hence, for large $t$ it follows
\begin{align*}
\int_{\mathbb{R}^n}|u(t,x)|^p dx &\gtrsim (1+t)^{-\frac{\ell}{2} p}\int_{\frac{A(t)-R-1 }{2}}^{A(t)-R-1} \frac{\rho^{(n-1)(1-\frac{p}{2})} }{A(t)-\rho-R} d\rho\\ &\gtrsim (1+t)^{-\frac{\ell}{2} p}(A(t)-R-1)^{(n-1)(1-\frac{p}{2})} \int_{\frac{A(t)-R-1 }{2}}^{A(t)-R-1} \frac{d\rho}{A(t)-\rho-R} \\  &\gtrsim (1+t)^{-\frac{\ell}{2} p}(A(t)-R-1)^{(n-1)(1-\frac{p}{2})}\log\big(\tfrac{A(t)-R+1}{2}\big) \\
& \gtrsim (1+t)^{-\frac{\ell}{2}p+(n-1)(1-\frac{p}{2})(\ell+1)}\log (1+t).
\end{align*}
Let us point out that in the case $p\geq 2$ in the first line of the previous chain of inequalities we have instead of $\rho^{(n-1)(1-\frac{p}{2})}$ the term $(A(t)+R)^{(n-1)(1-\frac{p}{2})}$. Nevertheless, the final estimate is the same.

Thus,
\begin{align*}
\frac{d^2 \mathscr{G}}{dt^2}(t)&=\int_{\mathbb{R}^n} u_{tt}(t,x)dx= (\ell+1)^2(1+t)^k\int_{\mathbb{R}^n}|u(t,x)|^p dx  \gtrsim (1+t)^{k-\frac{\ell}{2}p+(n-1)(\ell+1)-\frac{n-1}{2}p(\ell+1)} \log(1+t) ,
\end{align*} and  for large $t$ a first integration leads to
\begin{align*}
\frac{d \mathscr{G}}{dt}(t) 
&\gtrsim \frac{d \mathscr{G}}{dt}(0)+(1+t)^{k+1-\frac{\ell}{2}p+(n-1)(\ell+1)-\frac{n-1}{2}p(\ell+1)} \log(1+t).
\end{align*}

Similarly, performing a second integration, for large $t$  we arrive at
\begin{align*}
\mathscr{G}(t)&\gtrsim \mathscr{G}(0)+\frac{d \mathscr{G}}{dt}(0)\, t+(1+t)^{k+2-\frac{\ell}{2}p+(n-1)(\ell+1)-\frac{n-1}{2}p(\ell+1)} \log(1+t)\\ &\gtrsim(1+t)^{k+2-\frac{\ell}{2}p+(n-1)(\ell+1)-\frac{n-1}{2}p(\ell+1)} \log(1+t),
\end{align*} where in the last inequality we used the condition $p=p_{\NE}(n;\ell,k)>p_1(n;\ell,k) $ which implies that the exponent of $(1+t)$ is greater than $1$. But this last estimate is precisely the improvement of \eqref{condition on G Kato} we were looking for. This concludes the proof of Theorem \ref{Main theo} in the critical case.\end{proof}

\section{Proof of Corollary \ref{Blow up cor Tric eq}} \label{Section corollaries for the scale inv case}
\setcounter{equation}{0}

Let us consider the transformation \eqref{transformation u,v}. Choosing  $\ell$ and $k$ as in \eqref{definition ell and k},
the conditions \eqref{1st condition p blow up Tric} and \eqref{2nd condition p blow up Tric} on $p$ may be rewritten as
\begin{align*}
(n-1+\mu_1) p^2- (n+1+\mu_1)p-2<0 \,\,\, \mbox{and} \,\,\, p<p_{\Fuj}\big(n+\tfrac{\mu_1-1}{2}-\tfrac{\sqrt{\delta}}{2}\big) ,
\end{align*}
respectively. We are able to consider for \eqref{CP semi scale inv} the critical cases, too, with the only exception $p=p_0(n+\mu_1)$ in spatial dimension $n=1$. The sign conditions on data are due to
\begin{align*}
u_0(x)&=v_0\big(\tfrac{x}{\sqrt{\delta}}\big),\\
u_1(x)&= \tfrac{1}{\sqrt{\delta}}\big(v_1\big(\tfrac{x}{\sqrt{\delta}}\big)+\big(\tfrac{\mu_1-1+\sqrt{\delta}}{2}\big)v_0\big(\tfrac{x}{\sqrt{\delta}}\big)\big),
\end{align*} where $u_0,u_1$ denote the initial data for the transformed problem \eqref{CP semi Tric eq}.
Therefore, the result follows immediately from Theorem \ref{Main theo}.

\section{Concluding remarks and open problems} \label{Sectionconcludingremarks}
\setcounter{equation}{0}

Let us compare the statements from Theorem \ref{Main theo} with those we have recalled in the introduction.

In the limit case $\delta=1$ Corollary \ref{Blow up cor Tric eq}  coincides with
Theorem 2.6 in \cite{NunPalRei16}. Moreover, this corollary is consistent with Theorem 2.4 in \cite{NunPalRei16} and Theorem 2.4 in \cite{Pal17}.

Corollary \ref{cor damping 1} tells us that for $\mu_1\in [0,1)$ the damping term in the model \eqref{CP semi scale inv mu2=0} is non-effective or hyperbolic-like, being the upper bound for the nonexistence result $p_0(n+\mu_1)$. In particular, for $\mu_1=0$ it coincides with the Strauss exponent appearing in the classical blow-up result for the semi-linear free wave equation with power nonlinearity. Additionally, if we compare the result of this corollary with that one of \cite[Theorem 1.4]{Waka14A}, then we see that our result improves the range for $p$. A heuristic explanation of this fact can be done underlying that our result is proved through Kato's lemma, which is in some sense \textquotedblleft sharper$\!\!$ \textquotedblright $\,$ for hyperbolic-like models.

In the same way Corollary \ref{cor damping 2} says that for $\mu_1\in (1,2]$ the damping term in the model \eqref{CP semi scale inv mu2=0} is hyperbolic-like for spatial dimension $n\geq 2$. Indeed, it results
\begin{align}\label{threshould non-effectiveness}
p_0(n+\mu_1)\geq p_{\Fuj}(n) \qquad \mbox{iff} \qquad 0\leq \mu_1\leq \tfrac{n^2+n+2}{n+2}=:\widetilde{\mu}_1(n).
\end{align} So, in spatial dimension $n=1$ it is appropriate to say that the model is hyperbolic-like only for $\mu_1\in (1,\frac{4}{3}]$. In particular, we see that in the special case $\mu_1=2$ we find the same range for $p$  as in \cite[Theorem 1]{DabbLucRei15}.
Finally, we can compare Corollary \ref{cor damping 2} with Theorem 1.3 in \cite{Waka14A}. Also in this case, although the assumptions on initial data are somehow related, with our approach we are able to improve the range for $p$. We can repeat the same heuristic explanation made in the previous case, since in \cite{Waka14A} the author employs the test function method.

Because of \eqref{definition ell and k} we cannot say anything about the case $\mu_1=1$. Nevertheless, since
\begin{align*}
\lim_{\mu_1\to 1^-}p_{\mu_1}(n)=\lim_{\mu_1\to 1^+}p_{\mu_1}(n)=p_0(n+1),
\end{align*} it would be reasonable to conjecture that
\begin{align*}
p_1(n)=\max\big\{p_0(n+1),p_{\Fuj}(n)\}=p_0(n+1).
\end{align*} As we have already mentioned, the obstacle that does not allow to proceed with our approach the case $\mu_1=1$ is the impossibility to perform the change of variables that we have used in \eqref{transformation u,v}. But probably a different change of variables can be employed in order to study this specific case. More precisely, performing the change of variables
\begin{align*}
1+\tau=e^t, \quad x=y
\end{align*} then $v$ is a solution to \eqref{CP semi scale inv mu2=0} if and only if $v$ solves the following semi-linear Cauchy problem for the wave equation in the \emph{anti de Sitter space-time} \begin{align*}
\begin{cases}
v_{tt}-e^{2t}\Delta v =e^{2t}|v|^p, \qquad t>0, \; x\in \mathbb{R}^n, \\  v(0,x)=v_0(x), \quad x\in \mathbb{R}^n,\\  v_t(0,x)=v_1(x),\quad x\in \mathbb{R}^n.
\end{cases}
\end{align*}

By Corollaries \ref{cor damping 1} and \ref{cor damping 2} it arises quite naturally the question whether also for $\mu_1>2$ in spatial dimension $n\geq 3$ we can expect a hyperbolic-like damping term. According to \eqref{threshould non-effectiveness} this kind of conjecture would be meaningful only for $\mu_1\in (2,\widetilde{\mu}_1(n))$ if $n\geq 3$.

The validity of these two conjectures (namely, the upper bound for $p$ in the case $\mu_1=1$ and the hyperbolic-like damping term for $\mu_1\in (2,\widetilde{\mu}_1(n))$ in spatial dimension $n\geq 3$) has already been proved in \cite{IkedaSob17}. In particular, for a subset of admissible $\mu_1$s our result is cosistent with that in \cite{IkedaSob17}, which improves in turn that one in \cite{LaiTakWak17} and is coherent with the case $\mu_1=2$, for which the value of the critical exponent is known, at least in spatial dimension $n=1,2,3$ and $n\geq 5$ odd (cf. \cite{DabbLucRei15, DabbLuc15}).

We can summarize all results we proved in the present article for the semi-linear wave equation with scale-invariant damping and power non-linearity in  Figure \ref{fig 2x2} (there the thicker lines enclose the range for which we proved a blow up result in the present paper).


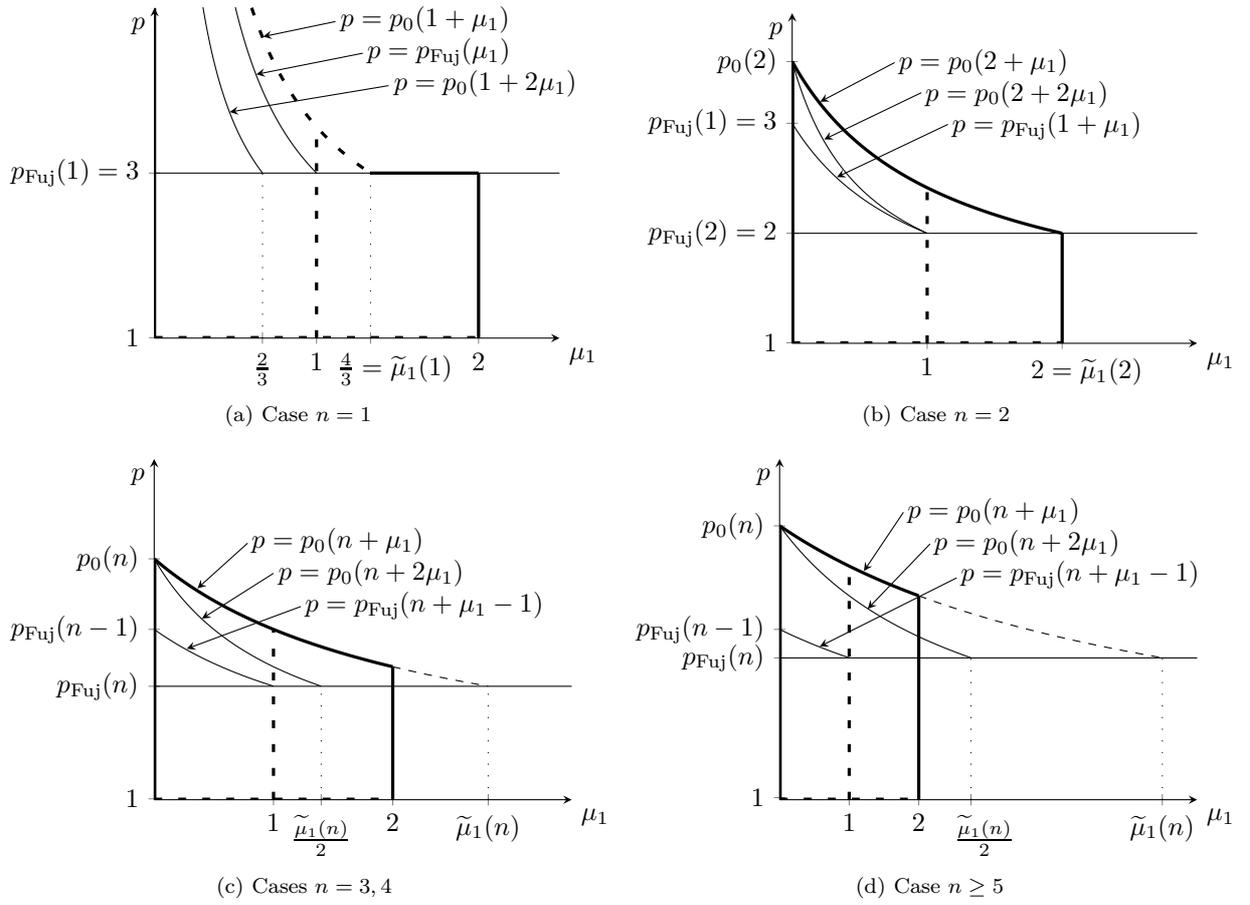
\begin{figure}
	\centering
		\subfloat[Case $n=1$]{\label{figura n=1}
		\begin{tikzpicture}
		
			\begin{axis}
			[width=0.42\textwidth,
			axis x line=middle,
			axis y line=middle,
			xlabel=$\mu_1$,
			ylabel=$p$,
			xlabel style={below right},
			ylabel style={below left},
			xmin=0, xmax=2.5, ymin=1, ymax=5,
			xtick={0.666,1,1.333,2},
			ytick={1.005,3},
			xticklabels={$\frac{2}{3}$, $1$,
			$\!\qquad\frac{4}{3}=\widetilde{\mu}_1(1)$, $2$},
			 yticklabels={$1$, $p_{\Fuj}(1)=3$}]
			\addplot [loosely dashed]
			[domain=0:4/3, samples=50, smooth, very thick]
			{(2+x+((x+1)^2+10*(x+1)-7)^(1/2))/(2*x)};
			\addplot
			[domain=0:2/3, samples=50, smooth, thin]
			{(2+2*x+((2*x+1)^2+10*(2*x+1)-7)^(1/2))/(4*x)};
			\addplot
			[domain=0:1, samples=50, smooth, thin]
			{1+2/x};
			\addplot
			[domain=0:1.333, samples=50, smooth, thin]
			{3};
			\addplot
			[domain=1.33:2, samples=50, smooth, very thick]
			{3};
			\addplot
			[domain=2:2.5, samples=50, smooth, thin]
			{3};
			\addplot [loosely dotted]
			[domain=1:3, variable=\t, samples=50, smooth, thin]
			({2/3},{t});
			\addplot [loosely dashed]
			[domain=1:3.561, variable=\t, samples=50, smooth, very thick]
			({1},{t});
			\addplot [loosely dotted]
			[domain=1:3, variable=\t, samples=50, smooth, thin]
			({4/3},{t});
			\addplot
			[domain=1:3, variable=\t, samples=50, smooth, very thick]
			({2},{t});
			\addplot
			[domain=1:5, variable=\t, samples=50, smooth, very thick]
			({0},{t});
			\addplot [loosely dashed]
			[domain=0:2, samples=50, smooth, ultra thick]
			{1};
			\end{axis}
			\node [align=left] at (4,3.8)
					  { $p=p_0(1+\mu_1)$ \\ $\quad p=p_{\Fuj}(\mu_1)$  \\
					  $\qquad p=p_0(1+2\mu_1)$};
			\draw [stealth - ] (1.43,4) - - (2.38,4.2);
			\draw [stealth - ] (1.34,3.5) - - (2.74,3.78);
			\draw [stealth - ] (0.99,3) - - (3.08,3.35);
			\end{tikzpicture}}
			\hfill
			\subfloat[Case $n=2$]{\label{figura n=2}
			\begin{tikzpicture}
			\begin{axis}
			[width=0.42\textwidth,
			axis x line=middle,
			axis y line=middle,
			xlabel=$\mu_1$,
			ylabel=$p$,
			xlabel style={below right},
			ylabel style={below left},
			xmin=0, xmax=3, ymin=1, ymax=4,
			xtick={1, 2},
			ytick={1.005,2,3,3.561},
			xticklabels={ $1$, $\!\qquad 2=\widetilde{\mu}_1(2)$},
			 yticklabels={$1$, $p_{\Fuj}(2)=2$, $p_{\Fuj}(1)=3$, $p_0(2)$}]
			\addplot
			[domain=0:2, samples=50, smooth, very thick]
			{(3+x+((x+2)^2+10*(x+2)-7)^(1/2))/(2*x+2)};
			\addplot
			[domain=0:1, samples=50, smooth, thin]
			{(3+2*x+((2*x+2)^2+10*(2*x+2)-7)^(1/2))/(4*x+2)};
			\addplot
			[domain=0:1, samples=50, smooth, thin]
			{1+2/(1+x)};
			\addplot
			[domain=0:3, samples=50, smooth, thin]
			{2};
			\addplot [loosely dashed]
			[domain=1:2.414, variable=\t, samples=50, smooth, very thick]
			({1},{t});
			\addplot
			[domain=1:2, variable=\t, samples=50, smooth, very thick]
			({2},{t});
			\addplot
			[domain=1:3.561, variable=\t, samples=50, smooth, ultra thick]
			({0},{t});
			\addplot [loosely dashed]
			[domain=0:2, samples=50, smooth, ultra thick]
			{1};
			\end{axis}
			\node [align=left] at (3,3.3)
					  {  $p=p_0(2+\mu_1)$\\ $\quad p=p_0(2+2\mu_1)$  \\
					   $\qquad p=p_{\Fuj}(1+\mu_1)$ };
			\draw [stealth - ] (0.370,3.2) - - (1.35,3.72);
			\draw [stealth - ] (0.422,2.7) - - (1.7,3.32);
			\draw [stealth - ] (0.589,2.2) - - (2.05,2.9);
		\end{tikzpicture}}
		
		\subfloat[Cases $n=3,4$]{\label{figura n=3,4}
		\begin{tikzpicture}
			\begin{axis}
			[width=0.43\textwidth,
			axis x line=middle,
			axis y line=middle,
			xlabel=$\mu_1$,
			ylabel=$p$,
			xlabel style={below right},
			ylabel style={below left},
			xmin=0, xmax=3.5, ymin=1, ymax=3,
			xtick={1, 1.4, 2, 2.8},
			ytick={1.005,1.666, 2, 2.414},
			xticklabels={ $1$, $\frac{\widetilde{\mu}_1(n)}{2}$,
			$2$, $\widetilde{\mu}_1(n)$},
			 yticklabels={$1$, $p_{\Fuj}(n)$, $p_{\Fuj}(n-1)$, $p_0(n)$}]
			\addplot
			[domain=0:2, samples=50, smooth, very thick]
			{(4+x+((x+3)^2+10*(x+3)-7)^(1/2))/(2*x+4)};
			\addplot [dashed]
			[domain=2:2.8, samples=50, smooth, thin]
			{(4+x+((x+3)^2+10*(x+3)-7)^(1/2))/(2*x+4)};
			\addplot
			[domain=0:1.4, samples=50, smooth, thin]
			{(4+2*x+((2*x+3)^2+10*(2*x+3)-7)^(1/2))/(4*x+4)};
			\addplot
			[domain=0:1, samples=50, smooth, thin]
			{1+2/(2+x)};
			\addplot
			[domain=0:3.5, samples=50, smooth, thin]
			{1.666};
			\addplot [loosely dashed]
			[domain=1:2, variable=\t, samples=50, smooth, very thick]
			({1},{t});
			\addplot [loosely dotted]
			[domain=1:1.666, variable=\t, samples=50, smooth, thin]
			({1.4},{t});
			\addplot [loosely dotted]
			[domain=1:1.666, variable=\t, samples=50, smooth, thin]
			({2.8},{t});
			\addplot
			[domain=1:1.780, variable=\t, samples=50, smooth, very thick]
			({2},{t});
			\addplot [loosely dashed]
			[domain=0:2, samples=50, smooth, ultra thick]
			{1};
			\addplot
			[domain=1:2.414, variable=\t, samples=50, smooth, ultra thick]
			({0},{t});
			\end{axis}
			\node [align=left] at (3.2,3)
					  {  $p=p_0(n+\mu_1)$\\ $\quad p=p_0(n+2\mu_1)$  \\
					   $\qquad p=p_{\Fuj}(n+\mu_1-1)$ };
			\draw [stealth - ] (0.57,2.8) - - (1.23,3.40);
			\draw [stealth - ] (0.63,2.4) - - (1.57,3.00);
			\draw [stealth - ] (0.43,2.0) - - (1.93,2.56);
		\end{tikzpicture}}
		\hfill
		\subfloat[Case $n\geq 5$]{\label{figura n>4}
		\begin{tikzpicture}
			\begin{axis}
			[width=0.43\textwidth,
			axis x line=middle,
			axis y line=middle,
			xlabel=$\mu_1$,
			ylabel=$p$,
			xlabel style={below right},
			ylabel style={below left},
			xmin=0, xmax=6, ymin=1, ymax=1.8,
			xtick={1, 2, 2.75, 5.5}, 
			ytick={1.005,1.333, 1.4, 1.643}, 
			xticklabels={ $1$, $2$, $\quad\frac{\widetilde{\mu}_1(n)}{2}$,
			 $\widetilde{\mu}_1(n)$},
			 yticklabels={$1$, $p_{\Fuj}(n)$, $p_{\Fuj}(n-1)$, $p_0(n)$}]
			\addplot
			[domain=0:2, samples=50, smooth, very thick]
			{(7+x+((x+6)^2+10*(x+6)-7)^(1/2))/(2*x+10)};
			\addplot [dashed]
			[domain=2:5.5, samples=50, smooth, thin]
			{(7+x+((x+6)^2+10*(x+6)-7)^(1/2))/(2*x+10)};
			\addplot
			[domain=0:2.75, samples=50, smooth, thin]
			{(7+2*x+((2*x+6)^2+10*(2*x+6)-7)^(1/2))/(4*x+10)};
			\addplot
			[domain=0:1, samples=50, smooth, thin]
			{1+2/(5+x)};
			\addplot
			[domain=0:6, samples=50, smooth, thin]
			{1.333};
			\addplot [loosely dashed]
			[domain=1:1.548, variable=\t, samples=50, smooth, very thick]
			({1},{t});
			\addplot [loosely dotted]
			[domain=1:1.333, variable=\t, samples=50, smooth, thin]
			({2.75},{t});
			\addplot [loosely dotted]
			[domain=1:1.333, variable=\t, samples=50, smooth, thin]
			({5.5},{t});
			\addplot
			[domain=1:1.479, variable=\t, samples=50, smooth, very thick]
			({2},{t});
			\addplot
			[domain=1:1.643, variable=\t, samples=50, smooth, ultra thick]
			({0},{t});
			\addplot [loosely dashed]
			[domain=0:2, samples=50, smooth, ultra thick]
			{1};
			\end{axis}
			 \node [align=left] at (3.6,3.4)
					  {  $p=p_0(n+\mu_1)$\\ $\quad p=p_0(n+2\mu_1)$  \\
					   $\qquad p=p_{\Fuj}(n+\mu_1-1)$ };
			\draw [stealth - ] (1.11,3.03) - - (1.63,3.81);
			\draw [stealth - ] (1.16,2.53) - - (1.98,3.39);
			\draw [stealth - ] (0.53,2.03) - - (2.33,2.94);
		\end{tikzpicture}}
	\captionsetup{format=hang,labelfont={sf,bf}}
	\caption{Range of the blow-up result for the semi-linear wave equation with  scale-invariant damping}
	\label{fig 2x2}
\end{figure}

Finally, in order to prove that $p_{\mu_1,\mu_2}(n)$ (and, in particular, $p_{\mu_1}(n)$ in the case $\mu_2=0$) is really the critical exponent for the Cauchy problem \eqref{CP semi scale inv} provided that $\delta\in (0,1]$, it remains to prove for an admissible range of parameters $p>p_{\mu_1,\mu_2}(n)$ a global (in time) existence result of small data solutions to \eqref{CP semi scale inv}. For example, something can be done following the approach of \cite[Theorem 1.2]{HeWittYin17}. But, there is a gap between the threshold over which one can prove a global (in time) existence result using the techniques of the above cited paper and $p_{\mu_1,\mu_2}(n)$ (cf. \cite[Chapter 7]{PalThe}). The authors of the above cited paper studied also the super-critical case in \cite{HeWittYinA} in order to fill this gap. However, in order to apply the same approach as in \cite{Geo97} they are forced to modify the nonlinearity.


\appendix

\renewcommand*{\thesection}{\Alph{section}}

\section{Gauss hypergeometric function} \label{Section Gauss hypergeo funct}

For the content of this section we refer to \citep{BE53,AS84}.

The Gauss hypergeometric function is defined for $|z|<1$ by the power series
\begin{align*}
F(a,b,c;z)= \phantom{}_{2}F_1(a,b;c;z)=\sum_{h=0}^\infty \frac{(a)_h (b)_h}{(c)_h} \frac{z^h}{h!}   \qquad \mbox{with} \,\, c\neq 0,-1,-2,\cdots,
\end{align*} where $(q)_n$ denotes the rising factorial.

The hypergeometric function is a solution of Euler's hypergeometric differential equation
\begin{align*}
z(1-z)w^{\prime\prime}(z)+[c-(a+b+1)z]w^\prime (z)-ab w(z)=0.
\end{align*}

When the parameters $(a,b;c)$ are all positive or when $a=b$ and $c>0$, then $F(a,b;c;\cdot)$ is increasing and positive on $[0,1)$. 
Finally,
\begin{align}
F'(a,b;c;z)=\frac{ab}{c} F(a+1,b+1;c+1;z). \label{derivative hypergeometric function}
\end{align}

\section{Hardy-Littlewood maximal function} \label{Section maximal function}

Let $u\in L^1_{\loc}(\mathbb{R}^n)$. The Hardy-Littlewood maximal function of $u$ is
$$M (u )(x)=\sup_{r>0}\,\frac{1}{\meas (B_r(x)) }\int_{B_r(x)}|u(y)|dy,$$
where $\meas (B_r(x))$ denotes the measure of the ball around $x$ with radius $r$.

\begin{prop}[Hardy-Littlewood maximal inequality]\label{L^p boundedness of M operator} Let $u\in L^p(\mathbb{R}^n)$, $1\leq p\leq \infty$. Then $M(u)$ is finite almost everywhere and there exists a constant $C$ depending only on $p$ and $n$ such that the following statements hold:
\begin{enumerate}
\item[\rm{(i)}] if $p=1$, then $$\meas\big(\{x\in\mathbb{R}^n: M(u)(x)>\lambda\}\big)\leq \frac{C}{\lambda}\|u\|_{L^1(\mathbb{R}^n)} \quad \mbox{for any} \,\, \lambda >0;$$
\item[\rm{(ii)}] if $1<p\leq \infty$, then $$\|M(u)\|_{L^p(\mathbb{R}^n)}\leq C\|u\|_{L^p(\mathbb{R}^n)}.$$
\end{enumerate}
\end{prop}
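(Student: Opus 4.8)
The plan is to prove the two assertions in the order in which they build on one another: first the trivial endpoint $p=\infty$, then the weak-type $(1,1)$ bound of part (i) via a Vitali covering argument, and finally the strong-type $(p,p)$ bound of part (ii) for $1<p<\infty$ by interpolating between these two endpoints with the Marcinkiewicz interpolation theorem (Proposition \ref{Marcinkiewicz inter thrm}). For $p=\infty$ it suffices to note that for every ball $B_r(x)$ one has $\frac{1}{\meas(B_r(x))}\int_{B_r(x)}|u(y)|\,dy\le\|u\|_{L^\infty(\mathbb{R}^n)}$, whence $M(u)(x)\le\|u\|_{L^\infty(\mathbb{R}^n)}$ for every $x$; in particular $M$ maps $L^\infty(\mathbb{R}^n)$ to $L^\infty(\mathbb{R}^n)$ with constant $1$ and $M(u)$ is then finite everywhere.

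For the weak-type $(1,1)$ estimate, fix $\lambda>0$ and set $E_\lambda=\{x\in\mathbb{R}^n:M(u)(x)>\lambda\}$. By definition of the supremum, to each $x\in E_\lambda$ one can associate a radius $r_x>0$ with $\meas(B_{r_x}(x))<\frac1\lambda\int_{B_{r_x}(x)}|u(y)|\,dy$; since the right-hand side is at most $\frac1\lambda\|u\|_{L^1(\mathbb{R}^n)}$, the radii $r_x$ are uniformly bounded. Given any compact set $K\subset E_\lambda$, the balls $\{B_{r_x}(x)\}_{x\in K}$ form an open cover, from which one extracts a finite subcover and then, by the Vitali covering lemma, a finite pairwise disjoint subfamily $B_1,\dots,B_N$ with $K\subset\bigcup_{j=1}^N 5B_j$. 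Consequently
\[
\meas(K)\le\sum_{j=1}^N\meas(5B_j)=5^n\sum_{j=1}^N\meas(B_j)<\frac{5^n}{\lambda}\sum_{j=1}^N\int_{B_j}|u(y)|\,dy\le\frac{5^n}{\lambda}\|u\|_{L^1(\mathbb{R}^n)},
\]
the last inequality using the disjointness of the $B_j$. Taking the supremum over compact $K\subset E_\lambda$ and invoking inner regularity of the Lebesgue measure gives part (i) with $C=5^n$, and in particular $M(u)$ is finite almost everywhere when $u\in L^1(\mathbb{R}^n)$.

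For part (ii) with $1<p<\infty$ note that $M$ is sublinear and positively homogeneous, is of weak type $(1,1)$ by part (i), and is of strong (hence weak) type $(\infty,\infty)$ by the first step; Marcinkiewicz interpolation (Proposition \ref{Marcinkiewicz inter thrm}) then yields $\|M(u)\|_{L^p(\mathbb{R}^n)}\le C\|u\|_{L^p(\mathbb{R}^n)}$ with $C$ depending only on $n$ and $p$, from which finiteness of $M(u)$ a.e.\ is immediate. If one prefers to avoid the interpolation machinery, the standard truncation argument works directly: writing $u=u\,\mathbf 1_{\{|u|>\lambda/2\}}+u\,\mathbf 1_{\{|u|\le\lambda/2\}}$ one has $\{M(u)>\lambda\}\subset\{M(u\,\mathbf 1_{\{|u|>\lambda/2\}})>\lambda/2\}$, so part (i) gives $\meas(\{M(u)>\lambda\})\le\frac{2C}{\lambda}\int_{\{|u|>\lambda/2\}}|u(x)|\,dx$; inserting this into $\|M(u)\|_{L^p}^p=p\int_0^\infty\lambda^{p-1}\meas(\{M(u)>\lambda\})\,d\lambda$ and applying Tonelli's theorem produces the claimed bound with an explicit $p$-dependent constant.

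The main obstacle is the Vitali covering step underlying part (i): the $L^\infty$ bound is immediate and the passage to $L^p$ is a black-box application of Marcinkiewicz interpolation (or a routine distribution-function computation), whereas turning the merely pointwise choice of balls $B_{r_x}(x)$ into a quantitative measure estimate requires both the disjointification supplied by the covering lemma (with the resulting dimensional constant $5^n$) and the reduction to compact subsets via inner regularity.
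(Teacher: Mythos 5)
Your proof is correct; note, though, that the paper itself does not prove this proposition at all --- it simply cites Stein (\cite[Theorem 1 at page 13]{Stein93}), so there is no ``paper proof'' to compare against, and your argument is precisely the classical one found in that reference: the trivial $L^\infty$ bound, the weak $(1,1)$ estimate via a Vitali-type covering argument with reduction to compact subsets by inner regularity, and the passage to $1<p<\infty$ by interpolation or truncation. One small caveat: the Marcinkiewicz interpolation theorem as stated in the paper (Proposition \ref{Marcinkiewicz inter thrm}) is formulated for \emph{linear} operators, whereas $M$ is only sublinear, so a literal black-box appeal to that proposition is not quite licensed; this is harmless because the sublinear version of Marcinkiewicz is standard, and in any case your second, self-contained route --- the truncation $u=u\,\mathbf 1_{\{|u|>\lambda/2\}}+u\,\mathbf 1_{\{|u|\le\lambda/2\}}$ combined with the layer-cake formula and Tonelli --- avoids the issue entirely (one only needs to observe, as is routine, that $u\,\mathbf 1_{\{|u|>\lambda/2\}}\in L^1(\mathbb{R}^n)$ when $u\in L^p(\mathbb{R}^n)$ with $p<\infty$). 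The covering constant $5^n$ versus the sharper $3^n$ of the finite Vitali lemma is immaterial, since the statement does not ask for an optimal constant.
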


For the proof of the previous result one can see \cite[Theorem 1 at page 13]{Stein93}.

\section{Weak $L^p$ spaces} \label{Section weak Lp}

In this section we recall the definition of the spaces $L^{p,\infty}(X)$ and we collect the main results that are employed in Section  \ref{Section repres formula Tric eq}, where throughout this section $(X,\mathfrak{M},\mu)$ is a measure space and $0<p\leq \infty$. We follow here the treatment of \cite{Gra04}.

For a given measurable function $f$ on $X$ the \emph{distribution function} of $f$ is the function $d_f$ which is defined as follows:
\begin{align*}
d_f(\alpha)=\mu\big(\{x\in X: |f(x)|>\alpha\}\big) \qquad \mbox{for any} \,\,\, \alpha > 0.
\end{align*}


 For $0<p<\infty$, the space \emph{weak} $L^p(X,\mu)$, denoted by $L^{p,\infty}(X,\mu)$, is defined as the set of all $\mu$-measurable functions $f$ such that
\begin{align*}
\| f\|_{L^{p,\infty}(X)}&=\inf \{C>0: d_f(\alpha)\leq \tfrac{C^p}{\alpha^p} \qquad \mbox{for any} \,\,\, \alpha>0\} = \sup_{\alpha>0}\,\,\alpha \, d_f(\alpha)^{\frac{1}{p}} <\infty.
\end{align*} The space \emph{weak} $L^\infty(X,\mu)$ is by definition $L^\infty(X,\mu)$.

The space $L^{p,\infty}(X,\mu)$ is a quasinormed linear space for $0<p<\infty$. The weak $L^p$ spaces are larger than the usual $L^p$ spaces. Indeed, it holds the following result.
\begin{prop} Let $0<p<\infty$. Then $L^{p}(X,\mu)\subset L^{p,\infty}(X,\mu)$ and in particular for any $f\in L^p(X,\mu)$ we have $$\|f\|_{L^{p,\infty}(X)}\leq \|f\|_{L^{p}(X)}.$$
\end{prop}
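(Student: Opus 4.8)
The plan is to derive the required bound on the distribution function $d_f$ from Chebyshev's (Markov's) inequality, and then read off the claimed norm inequality directly from the definition of $\|\cdot\|_{L^{p,\infty}(X)}$.

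First I would fix $f\in L^p(X,\mu)$; if $\|f\|_{L^p(X)}=\infty$ there is nothing to prove, so assume $\|f\|_{L^p(X)}<\infty$. For an arbitrary $\alpha>0$ consider the super-level set $E_\alpha=\{x\in X:|f(x)|>\alpha\}$. On $E_\alpha$ one has $|f(x)|^p>\alpha^p$, hence
\[
\alpha^p\, d_f(\alpha)=\alpha^p\,\mu(E_\alpha)=\int_{E_\alpha}\alpha^p\,d\mu\le \int_{E_\alpha}|f(x)|^p\,d\mu\le \int_X |f(x)|^p\,d\mu=\|f\|_{L^p(X)}^p .
\]
Dividing by $\alpha^p$ yields $d_f(\alpha)\le \|f\|_{L^p(X)}^p/\alpha^p$ for every $\alpha>0$.

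Next I would invoke the definition $\|f\|_{L^{p,\infty}(X)}=\inf\{C>0:\ d_f(\alpha)\le C^p/\alpha^p\ \text{for all}\ \alpha>0\}$. The inequality just established shows that $C=\|f\|_{L^p(X)}$ is an admissible competitor in this infimum, whence $\|f\|_{L^{p,\infty}(X)}\le\|f\|_{L^p(X)}$; in particular $f\in L^{p,\infty}(X,\mu)$, so $L^p(X,\mu)\subset L^{p,\infty}(X,\mu)$. Equivalently, one may use the second expression $\|f\|_{L^{p,\infty}(X)}=\sup_{\alpha>0}\alpha\, d_f(\alpha)^{1/p}$ and note that the bound on $d_f$ gives $\alpha\, d_f(\alpha)^{1/p}\le \|f\|_{L^p(X)}$ for each $\alpha$, so the supremum over $\alpha$ is $\le \|f\|_{L^p(X)}$.

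There is essentially no obstacle here: the argument is a one-line application of Chebyshev's inequality. The only point requiring a moment's care is the equivalence of the two displayed formulas for $\|\cdot\|_{L^{p,\infty}(X)}$, but this is already asserted in the statement preceding the proposition, and either form delivers the conclusion at once.
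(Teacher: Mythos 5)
Your proof is correct: the Chebyshev/Markov estimate $\alpha^p d_f(\alpha)\le\int_X|f|^p\,d\mu$ is exactly the standard argument (the paper itself only cites Grafakos for this fact, and that reference proves it the same way). Nothing further is needed.
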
 Moreover, the inclusion $L^{p}(X,\mu)\subset L^{p,\infty}(X,\mu)$ is strict for $0<p<\infty$. For example, we can consider the function $h(x)=|x|^{-\frac{n}{p}}$ for $x\in \mathbb{R}^n$. Clearly, $h$ is not in $L^p(\mathbb{R}^n)$, but $h$ is in $L^{p,\infty}(\mathbb{R}^n)$ with $\|h\|_{L^{p,\infty}(\mathbb{R}^n)}^p=\nu_n$, where $\nu_n$ is the $n$-dimensional Lebesgue measure of the unit ball of $\mathbb{R}^n$.

The weak $L^p$ spaces are complete with respect to the quasinorm $\|\cdot\|_{L^{p,\infty}(X)}$. Furthermore, $L^{p,\infty}(X,\mu)$ is metrizable  for $0<p<\infty$ and normable for $p>1$. Hence, $L^{p,\infty}(X,\mu)$ is a quasi-Banach space  for $0<p<\infty$ (Banach space for $p>1$, respectively).

\begin{rmk}
Let $f$, $g$ be measurable function such that $|f|\leq |g|$ $\mu$-a.e.. Then, $\|f\|_{L^{p,\infty}(X)}\leq \|g\|_{L^{p,\infty}(X)} $.
\end{rmk}

 The next result corresponds to the weak $L^p$ spaces version of H\"{o}lder's inequality  (cf. \cite[Exercise 1.1.15]{Gra04}).
\begin{prop}[H\"{o}lder's inequality for weak type spaces]\label{Holder ineq weak Lp}
Let $(X,\mu)$ be a measurable space and let $\{f_j\}_{1\leq j\leq k}$ be measurable functions on $X$.
 Let us assume $f_j\in L^{p_j,\infty}(X)$ with $0<p_j<\infty$ for any $1\leq j\leq k$. Define $0<p< \infty$ such that $$\frac{1}{p}=\sum_{j=1}^k \frac{1}{p_j}.$$ Then $\prod_{j=1}^k f_j\in L^{p,\infty}(X)$ and, furthermore,
\begin{align*}
\Big\|\prod_{j=1}^k f_j\Big\|_{L^{p,\infty}(X)}\leq p^{-\frac{1}{p}}\prod_{j=1}^k p_j^{\frac{1}{p_j}}\prod_{j=1}^k \| f_j\|_{L^{p_j,\infty}(X)}.
\end{align*}
\end{prop}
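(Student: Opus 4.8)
The plan is to prove the statement first in the case $k=2$ with the sharp constant, and then to pass to general $k$ by induction, organizing the constants so that the telescoping in the induction produces exactly the asserted bound.

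For the case $k=2$, by the homogeneity of the quasinorm one may normalize $\|f_1\|_{L^{p_1,\infty}(X)}=\|f_2\|_{L^{p_2,\infty}(X)}=1$ (the cases where one of these equals $0$ or $+\infty$ being trivial), so that the distribution functions satisfy $d_{f_1}(\alpha)\le\alpha^{-p_1}$ and $d_{f_2}(\alpha)\le\alpha^{-p_2}$ for all $\alpha>0$. For any $\delta>0$ one has the elementary set inclusion $\{|f_1f_2|>\alpha\}\subset\{|f_1|>\alpha/\delta\}\cup\{|f_2|>\delta\}$ (if $|f_1|\le\alpha/\delta$ and $|f_2|\le\delta$, then $|f_1f_2|\le\alpha$), hence
$$d_{f_1f_2}(\alpha)\le d_{f_1}(\alpha/\delta)+d_{f_2}(\delta)\le\delta^{p_1}\alpha^{-p_1}+\delta^{-p_2}.$$
Using the identity $p(p_1+p_2)=p_1p_2$, the choice $\delta=t\,\alpha^{1-p/p_1}$ renders both terms on the right homogeneous of degree $-p$ in $\alpha$, so that $\alpha^p d_{f_1f_2}(\alpha)\le t^{p_1}+t^{-p_2}$ for every $t>0$. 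Minimizing the right-hand side over $t$ (with minimizer $t=(p_2/p_1)^{1/(p_1+p_2)}$) yields $\alpha^p d_{f_1f_2}(\alpha)\le p^{-1}p_1^{p/p_1}p_2^{p/p_2}$; taking $p$-th roots and using $p/p_1=p_2/(p_1+p_2)$, $p/p_2=p_1/(p_1+p_2)$ gives precisely $\|f_1f_2\|_{L^{p,\infty}(X)}\le p^{-1/p}p_1^{1/p_1}p_2^{1/p_2}$, which is the claimed inequality for $k=2$.

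For the inductive step, assume the estimate holds for $k-1$ factors and define $q$ by $1/q=\sum_{j=1}^{k-1}1/p_j$, so that $1/p=1/q+1/p_k$. Applying the inductive hypothesis to $g:=\prod_{j=1}^{k-1}f_j\in L^{q,\infty}(X)$ and then the case $k=2$ to the pair $g,f_k$, one obtains $\|\prod_{j=1}^k f_j\|_{L^{p,\infty}(X)}\le p^{-1/p}q^{1/q}p_k^{1/p_k}\big(q^{-1/q}\prod_{j=1}^{k-1}p_j^{1/p_j}\big)\prod_{j=1}^k\|f_j\|_{L^{p_j,\infty}(X)}$; the factors $q^{1/q}$ and $q^{-1/q}$ cancel, leaving exactly $p^{-1/p}\prod_{j=1}^k p_j^{1/p_j}\prod_{j=1}^k\|f_j\|_{L^{p_j,\infty}(X)}$, as desired.

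The only genuinely delicate point is the extraction of the optimal constant in the case $k=2$: one must pick the splitting parameter $\delta$ as the correct power of $\alpha$ so that the upper bound for $\alpha^p d_{f_1f_2}(\alpha)$ becomes independent of $\alpha$, and then carry out the one-variable minimization carefully. The remaining ingredients — the set inclusion, the normalization by homogeneity, and the cancellation of the auxiliary constants in the induction — are routine.
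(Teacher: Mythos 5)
Your proof is correct: the splitting $\{|f_1f_2|>\alpha\}\subset\{|f_1|>\alpha/\delta\}\cup\{|f_2|>\delta\}$ with $\delta=t\,\alpha^{1-p/p_1}$, the minimization over $t$ (the value at $t=(p_2/p_1)^{1/(p_1+p_2)}$ indeed equals $p^{-1}p_1^{p/p_1}p_2^{p/p_2}$), and the induction with the cancellation of $q^{\pm 1/q}$ all check out and reproduce the stated sharp constant. The paper itself gives no proof of this proposition but only cites \cite[Exercise 1.1.15]{Gra04}, and your argument is essentially the standard one underlying that reference, so there is nothing to add.
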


 Finally, we state two results for weak $L^p$ spaces that we use in Section \ref{Section repres formula Tric eq} together with Proposition \ref{Holder ineq weak Lp}. The first result is the counterpart of Young's inequality for weak spaces. 

\begin{prop}[Young's inequality for weak type spaces]\label{Young ineq weak spac} Let $1\leq p <\infty$ and $1<q,r<\infty$ satisfy $$\frac{1}{r}+1=\frac{1}{p}+\frac{1}{q}.$$ Then there exists a constant $C=C(p,q,r)>0$ such that for all $f\in L^p(\mathbb{R}^n)$ and $g\in L^{q,\infty}(\mathbb{R}^n)$ we have
\begin{align*}
\|f\ast g\|_{L^{r,\infty}(\mathbb{R}^n)}\leq C\,  \|g\|_{L^{q,\infty}(\mathbb{R}^n)}\|f\|_{L^{p}(\mathbb{R}^n)}.
\end{align*}
\end{prop}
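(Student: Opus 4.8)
The plan is to read the estimate as a mapping property: for $g\in L^{q,\infty}(\mathbb{R}^n)$ fixed, the (sublinear) convolution operator $f\mapsto f\ast g$ should send $L^p(\mathbb{R}^n)$ into $L^{r,\infty}(\mathbb{R}^n)$ with the stated norm bound, and this I would prove by the classical device of splitting $g$ at a variable height. Since $f\ast g$ is bilinear and the (quasi)norms involved are homogeneous, I may first normalise $\|f\|_{L^p(\mathbb{R}^n)}=\|g\|_{L^{q,\infty}(\mathbb{R}^n)}=1$, so that $d_g(\lambda)\le\lambda^{-q}$ for all $\lambda>0$, and it then suffices to show $\alpha^r\,d_{f\ast g}(\alpha)\le C(p,q,r)$ for every $\alpha>0$. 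Fixing $\alpha$ and a cut-off height $\delta=\delta(\alpha)>0$ to be optimised later, I write $g=g_1+g_2$ with $g_1=g\,\mathbf{1}_{\{|g|>\delta\}}$, $g_2=g\,\mathbf{1}_{\{|g|\le\delta\}}$; then $f\ast g=f\ast g_1+f\ast g_2$ and the subadditivity of the distribution function gives
\begin{align*}
d_{f\ast g}(\alpha)\le d_{f\ast g_1}(\alpha/2)+d_{f\ast g_2}(\alpha/2).
\end{align*}

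For the ``low'' piece I would first note that the relation $\tfrac1r+1=\tfrac1p+\tfrac1q$ is equivalent to $\tfrac1q=\tfrac1r+\tfrac1{p'}$, hence $q<p'$; by the layer-cake formula and $d_{g_2}(\lambda)\le d_g(\lambda)\le\lambda^{-q}$ (and $d_{g_2}(\lambda)=0$ for $\lambda\ge\delta$) one gets $\|g_2\|_{L^{p'}(\mathbb{R}^n)}\lesssim\delta^{1-q/p'}$, so the classical Young inequality yields $\|f\ast g_2\|_{L^\infty(\mathbb{R}^n)}\le\|f\|_{L^p(\mathbb{R}^n)}\|g_2\|_{L^{p'}(\mathbb{R}^n)}\lesssim\delta^{1-q/p'}$. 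Choosing $\delta$ a suitable multiple of $\alpha^{p'/(p'-q)}$ then makes $\|f\ast g_2\|_{L^\infty(\mathbb{R}^n)}\le\alpha/2$, so that $d_{f\ast g_2}(\alpha/2)=0$ and only the ``high'' piece survives. For $f\ast g_1$ I would pick any exponent $s\in[1,q)$; the same layer-cake computation over $\{|g|>\delta\}$ gives $\|g_1\|_{L^s(\mathbb{R}^n)}\lesssim\delta^{1-q/s}$, and since $s<q<p'$ the exponent $\rho$ defined by $\tfrac1\rho=\tfrac1p+\tfrac1s-1$ lies in $[1,\infty)$, so the classical Young inequality gives $\|f\ast g_1\|_{L^\rho(\mathbb{R}^n)}\le\|f\|_{L^p(\mathbb{R}^n)}\|g_1\|_{L^s(\mathbb{R}^n)}\lesssim\delta^{1-q/s}$. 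Chebyshev's inequality and the choice $\delta\simeq\alpha^{p'/(p'-q)}$ then give
\begin{align*}
d_{f\ast g}(\alpha)\le d_{f\ast g_1}(\alpha/2)\le\Big(\tfrac2\alpha\Big)^{\rho}\|f\ast g_1\|_{L^\rho(\mathbb{R}^n)}^{\rho}\lesssim\alpha^{-\rho}\,\delta^{(1-q/s)\rho}\simeq\alpha^{\rho\left(\frac{1-q/s}{1-q/p'}-1\right)},
\end{align*}
and a short computation using $\tfrac1q=\tfrac1r+\tfrac1{p'}$ (together with $\tfrac1\rho=\tfrac1p+\tfrac1s-1$) shows the exponent equals $-r$, independently of the chosen $s$; this is exactly $\alpha^r d_{f\ast g}(\alpha)\le C(p,q,r)$. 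When $p=1$ one has $p'=\infty$ and the argument collapses (with $\delta\simeq\alpha/\|f\|_{L^1}$ and $\rho=s$), and one may alternatively exploit the normability of $L^{q,\infty}(\mathbb{R}^n)$ for $q>1$ together with Minkowski's integral inequality.

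The only genuinely delicate point is the bookkeeping of exponents at the end: one has to verify that after the optimal choice of the cut-off height $\delta(\alpha)$ the powers of $\alpha$ contributed by the two pieces fuse precisely into the weak-$L^r$ scaling $\alpha^{-r}$ (and, by homogeneity, that the powers of $\|f\|_{L^p}$ and $\|g\|_{L^{q,\infty}}$ are consistent), together with checking that the auxiliary integrals defining $\|g_2\|_{L^{p'}}$ and $\|g_1\|_{L^s}$ converge. These are exactly the places where the hypotheses $1\le p<\infty$, $1<q,r<\infty$ and the scaling relation $\tfrac1r+1=\tfrac1p+\tfrac1q$ — equivalently $q<p'$ and the existence of an admissible $s\in[1,q)$ — enter in an essential way.
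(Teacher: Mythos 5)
Your argument is correct: the normalization, the splitting $g=g_1+g_2$ at the height $\delta(\alpha)$, the $L^{p'}$- and $L^{s}$-bounds for the two pieces via the layer-cake formula and $d_g(\lambda)\le\lambda^{-q}$, and the final Chebyshev step with the exponent bookkeeping (which indeed fuses to $\alpha^{-r}$, independently of $s\in[1,q)$, using $\tfrac1q=\tfrac1r+\tfrac1{p'}$) all check out, including the separate treatment of $p=1$. The paper does not prove this proposition but refers to \cite[Theorem 1.2.13]{Gra04}, and your proof is essentially that standard cut-off-height argument (Grafakos takes $s=1$; your extra freedom in $s$ changes nothing), so there is nothing further to reconcile.
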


The next interpolation result can be stated in the general frame of measurable spaces.

\begin{prop}[Marcinkiewicz interpolation theorem]\label{Marcinkiewicz inter thrm} Let $(X,\mu)$ and $(Y,\nu)$ be measurable spaces and let $0<p_0<p_1\leq \infty$. Let $T$ be a linear operator defined on the space $L^{p_0}(X)+L^{p_1}(X)$ and taking values in the space of measurable functions on $Y$. Assume that there exists two positive constant $A_0$ and $A_1$ such that
\begin{align*}
&\|T f\|_{L^{p_0,\infty}(Y)}\leq A_0 \|f\|_{L^{p_0}(X)} \qquad \mbox{for any} \,\,\, f\in L^{p_0}(X) ,\\&\|T f\|_{L^{p_1,\infty}(Y)}\leq A_1 \|f\|_{L^{p_1}(X)} \qquad \mbox{for any} \,\,\, f\in L^{p_1}(X).
\end{align*} Then for all $p\in (p_0,p_1)$ and for all $f\in L^p(X)$ we have the estimate
\begin{align*}
\|T f\|_{L^{p}(Y)}\leq A \|f\|_{L^{p}(X)},
\end{align*} where
\begin{align*}
A=2\Big(\tfrac{p}{p-p_0}+\tfrac{p}{p-p_1}\Big)^{\frac{1}{p}}A_0^{1-\theta}A_1^{\theta}, \qquad \mbox{with} \,\,\, \theta \in (0,1) \,\,\, \mbox{determined through} \,\,\,  \tfrac 1p=\tfrac{1-\theta}{p_0}+\tfrac{\theta}{ p_1}.
\end{align*}
\end{prop}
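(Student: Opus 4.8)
The plan is to prove this weak-type interpolation theorem by the classical distribution-function argument, following the treatment in \cite{Gra04}. The starting point is the layer-cake identity
\[
\|Tf\|_{L^p(Y)}^p = p \int_0^\infty \alpha^{p-1}\, d_{Tf}(\alpha)\, d\alpha, \qquad d_{Tf}(\alpha)=\nu\big(\{y\in Y:|Tf(y)|>\alpha\}\big),
\]
which reduces everything to an estimate for the distribution function of $Tf$. Since $T$ is controlled only at the endpoints $p_0$ and $p_1$, I would, for each fixed level $\alpha>0$, truncate the datum as $f=f^0_\alpha+f^1_\alpha$ with $f^0_\alpha:=f\,\mathbf 1_{\{|f|>\delta\alpha\}}$ and $f^1_\alpha:=f\,\mathbf 1_{\{|f|\le\delta\alpha\}}$, where $\delta>0$ is a free parameter to be fixed only at the end. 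Because $p_0<p$, the ``high part'' $f^0_\alpha$ belongs to $L^{p_0}(X)$; because $p<p_1$, the ``low part'' $f^1_\alpha$ belongs to $L^{p_1}(X)$ (to $L^\infty(X)$ when $p_1=\infty$). Hence the first hypothesis applies to $f^0_\alpha$ and the second to $f^1_\alpha$.

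By linearity of $T$ one has $|Tf|\le|Tf^0_\alpha|+|Tf^1_\alpha|$, so $\{|Tf|>\alpha\}\subset\{|Tf^0_\alpha|>\alpha/2\}\cup\{|Tf^1_\alpha|>\alpha/2\}$, and the two weak-type bounds give
\[
d_{Tf}(\alpha)\le\Big(\tfrac{2A_0}{\alpha}\Big)^{p_0}\|f^0_\alpha\|_{L^{p_0}(X)}^{p_0}+\Big(\tfrac{2A_1}{\alpha}\Big)^{p_1}\|f^1_\alpha\|_{L^{p_1}(X)}^{p_1}.
\]
Inserting this into the layer-cake identity, writing $\|f^0_\alpha\|_{L^{p_0}}^{p_0}=\int_{\{|f|>\delta\alpha\}}|f|^{p_0}\,dx$ and $\|f^1_\alpha\|_{L^{p_1}}^{p_1}=\int_{\{|f|\le\delta\alpha\}}|f|^{p_1}\,dx$, and interchanging the order of integration by Fubini, one is left with two elementary power integrals in $\alpha$: the first converges near $\alpha=0$ precisely because $p_0<p$ and contributes a factor $\tfrac{p}{p-p_0}$, while the second converges near $\alpha=\infty$ precisely because $p<p_1$ and contributes a factor $\tfrac{p}{p_1-p}$. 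The outcome is a bound of the form
\[
\|Tf\|_{L^p(Y)}^p\le C\big(\delta^{p_0-p}A_0^{p_0}+\delta^{p_1-p}A_1^{p_1}\big)\|f\|_{L^p(X)}^p,
\]
with an explicit constant $C$ built from $2^p$, $\tfrac{p}{p-p_0}$ and $\tfrac{p}{p_1-p}$; balancing the two summands by choosing $\delta^{p_1-p_0}\simeq A_0^{p_0}/A_1^{p_1}$ collapses the $A_0,A_1$-dependence into $A_0^{1-\theta}A_1^{\theta}$, with $\theta$ determined by $\tfrac1p=\tfrac{1-\theta}{p_0}+\tfrac{\theta}{p_1}$, and tracking the remaining numerical factors yields precisely the constant $A$ asserted in the statement.

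The endpoint case $p_1=\infty$ should be treated separately, and it is simpler: there one takes $\delta=(2A_1)^{-1}$, so that $\|Tf^1_\alpha\|_{L^\infty(Y)}\le A_1\|f^1_\alpha\|_{L^\infty(X)}\le A_1\delta\alpha=\alpha/2$ and therefore the set $\{|Tf^1_\alpha|>\alpha/2\}$ is empty; only the weak-$L^{p_0}$ term survives, and a single power integral in $\alpha$ (convergent because $p_0<p$) closes the estimate, with $\tfrac{p}{p_1-p}$ read as $0$.

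The qualitative conclusion --- boundedness from $L^p(X)$ to $L^p(Y)$ --- comes out routinely once this truncation is in place, and it is sublinearity of $T$, not linearity, that is actually used, through the inclusion of super-level sets above. The one delicate point, and the real obstacle, is quantitative: one must take the truncation height \emph{proportional} to $\alpha$ (rather than a fixed constant), which is exactly what makes the two endpoint hypotheses combine multiplicatively into $A_0^{1-\theta}A_1^{\theta}$ instead of merely additively, and then carry out the Fubini bookkeeping and the evaluation of the resulting Beta-type integrals carefully enough to recover the exact value of the constant $A$ claimed in the statement.
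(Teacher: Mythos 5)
Your argument is correct and is precisely the classical proof of the Marcinkiewicz interpolation theorem found in Grafakos (Theorem 1.3.2), which is also the only justification the paper gives for this proposition (it is stated without proof, with that citation): the $\alpha$-dependent truncation of $f$ at height $\delta\alpha$, the layer-cake identity plus Fubini producing the factors $\tfrac{p}{p-p_0}$ and $\tfrac{p}{p_1-p}$, the balancing choice of $\delta$ (respectively $\delta=(2A_1)^{-1}$ when $p_1=\infty$) yielding $A_0^{1-\theta}A_1^{\theta}$, all match the cited proof and recover the stated constant. One remark: in the paper's formula for $A$ the factor $\tfrac{p}{p-p_1}$ is a typo for $\tfrac{p}{p_1-p}$, and your computation, which produces $\tfrac{p}{p_1-p}$, gives the correct version.
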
 For the proofs of the previous propositions one can see \cite[Theorem 1.2.13 and Theorem 1.3.2]{Gra04}.

\section*{Acknowledgments}

The PhD study of the first author is supported by S\"{a}chsiches Landesgraduiertenstipendium. The first author is member of the Gruppo Nazionale per L'Analisi Matematica, la Probabilit\`{a} e le loro Applicazioni (GNAMPA) of the Instituto Nazionale di Alta Matematica (INdAM). Both authors thank Karen Yagdjian (Edinburg, Texas) for his useful comments to prepare the final version of this paper.





\section*{References}

\end{document}